\documentclass[12pt,leqno]{amsart}
\usepackage[left=3cm,right=3cm,top=3cm,bottom=3cm]{geometry}
    
\usepackage{latexsym}
\usepackage{amssymb}
\usepackage{amsmath}
\usepackage{amsfonts}
\usepackage{mathtools} 
\usepackage{changepage} 
\usepackage{xcolor}

\def\R{\mathbb{R}}

\newcommand{\bmat}{\left[\begin{matrix}}
\newcommand{\emat}{\end{matrix}\right]}
\usepackage[latin1]{inputenc} 
\usepackage{amsthm}
\usepackage{amsxtra}
\usepackage{amscd}
\usepackage{setspace}   
\usepackage{mathrsfs}   

\theoremstyle{remark}
\newtheorem{remark}[subsection]{Remark}

\theoremstyle{definition}

\newcommand{\Z}{\mathbb{Z}}
\newcommand{\Q}{\mathbb{Q}}
\newcommand{\C}{\mathbb{C}}

\newcommand{\F}{\mathbb{F}}

\newcommand{\sfr}[2]{\tfrac{#1}{#2}} 

\newcommand{\Mat}{\mathrm{Mat}}
\newcommand{\Sp}{\mathrm{Sp}}
\newcommand{\GL}{\mathrm{GL}}
\newcommand{\SL}{\mathrm{SL}}

\newcommand{\SO}{{\mathrm{SO}}}

\newcommand{\diag}{\mathrm{diag}}

\newcommand{\im}{\mathrm{im}\,}
\providecommand{\ker}{}
\renewcommand{\ker}{\mathrm{ker}\,}

\newcommand{\rk}{\mathrm{rk}}

\newcommand{\spn}{\mathrm{span}}

\newcommand{\pr}{\mathrm{pr}}

\newcommand{\ignore}[1]{{}}

\newcommand{\radon}{\mathcal{R}}
\newcommand{\xlat}{\mathsf{X}}
\newcommand{\ylat}{\mathsf{Y}}

\usepackage{enumerate}
\usepackage{hyperref}
\usepackage{pst-node}
\usepackage{tikz-cd}

    \title[Second moments of symplectic Siegel transforms]{Explicit Second Moments of \\ Symplectic Siegel Transforms}
    
    \author{Kristian Holm, Seungki Kim}

\makeatletter
\def\@seccntformat#1{%
  \protect\textup{\protect\@secnumfont
    \ifnum\pdfstrcmp{subsection}{#1}=0 \bfseries\fi%
    \csname the#1\endcsname
    \protect\@secnumpunct
  }%
}
\makeatother

\swapnumbers

\makeatletter
\def\swappedhead#1#2#3{%
  \thmnumber{\@upn{\the\thm@headfont#2\@ifnotempty{#1}{.~}}}%
  \thmname{#1}%
  \thmnote{ {\the\thm@notefont(#3)}}%
}
\makeatother

\newtheorem{theorem}[subsection]{Theorem}
\newtheorem{proposition}[subsection]{Proposition}
\newtheorem{lemma}[subsection]{Lemma}

\begin{document}

\maketitle

\begin{abstract}
We give two alternate formulations for the second moment formula of the Siegel transform over $\Sp(2n, \Z) \backslash \Sp(2n,\R)$, previously established by Athreya-Konstantoulas and Kelmer-Yu. We also provide several applications that demonstrate their usefulness.
\end{abstract}

\section{Introduction}

\subsection{Motivation}

This note represents an effort to better understand the Rogers integral formula for the second moment of the Siegel transform over symplectic groups, previously proved in various guises by Athreya-Konstantoulas \cite{AK16} as well as Kelmer-Yu \cite[Theorem 1]{KY17}. Elegant as these results are, practitioners have sometimes encountered difficulties in utilizing them.

First, \cite{AK16} seems to contain a number of inaccuracies, in its main statement \cite[Theorem 1.2]{AK16} as well as its proof; and although we believe these issues can be fixed, this would take a little more than a trivial amount of effort. Second, in the case of the symplectic second-moment formula \cite[Theorem 1]{KY17} for the primitive Siegel transform, the presence of a ``mysterious'' $L^2$-isometry $\iota$ \cite[Eq. (5.11)]{KY17}, constructed from a period operator $\mathcal{P}_f$ that averages the Siegel transform of $f$ over a quotient of a horospherical subgroup, has made the formula difficult to apply in certain situations (see, e.g., \cite{Hol23} and \cite[Remark 5.4]{BG23}). For comparison, the corresponding isometry in Rogers' primitive second-moment formula \cite[Theorem 5]{Rog55} for the space $\xlat_n = \Gamma \backslash G = \SL(n, \Z) \backslash \SL(n, \R)$ is essentially the operator $f \mapsto f \circ (-1)$; in particular, for even test functions (the class considered in \cite{KY17}), this isometry is trivial.

In addition to the matter of applying these formulas, the task of extending the methods used to prove them, to obtain formulas for the third and higher moments, seems rather daunting --- classifications of $\Sp(2n,\Z)$-orbits and their individual weight computations (for \cite{AK16}) and spectral theory (for \cite{KY17}). Hence, we believe that a case can be made for pursuing an alternative route to the symplectic second moment of the Siegel transform.

To understand the extra complications in the $\Sp(2n, \R)$-case, which are absent in the $\SL(n, \R)$-case, let us briefly recall the popular approach to the Siegel transforms due to Siegel and Weil \cite{Sie45,Wei46}. For $G$ a Lie group, $\Gamma \subseteq G$ a lattice, a closed subgroup $P \subseteq G$, and a ``nice'' function $f$ on $P\backslash G$, one can show with the unfolding technique that
\begin{align*}
\int_{\Gamma\backslash G} \sum_{\gamma \in \Gamma \cap P \backslash \Gamma} f(\gamma g) dg = \int_{P\backslash G} f(h)dh \cdot \int_{\Gamma \cap P \backslash P} d\gamma,
\end{align*}
where $dg,dh,d\gamma$ are the appropriate invariant measures on their respective spaces.

For instance, suppose $G = \SL(n,\R)$ and $P$ is the stabilizer of 
$\left( \begin{smallmatrix}
    0 & \cdots & 0 & 1 & 0 \\ 0 & \cdots & 0 & 0 & 1
\end{smallmatrix} \right)$
under the action of $G$ on $\mathrm{Mat}_{2\times n}(\R)$ by right multiplication. It turns out this action is transitive up to a nullset, which allows one to identify $P\backslash G \cong (\R^n)^2$ up to a nullset. Thus, under this identification, $dh$ is a multiple of the Lebesgue measure; a suitable choice of the test function $f$ would help determine the constant factor. This (almost) immediately gives the following well-known formula due to Rogers \cite{Rog55} concerning the space $\xlat_n$.

\begin{theorem}[\cite{Rog55}, Theorem 4; $k = 2$] \label{thm:RogersIntegrationFormula} 
If $f_1, f_2 : \R^n \rightarrow \R$ are non-negative functions with compact support, and $n \geq 3$, then
\begin{align*}
    \int_{\xlat_n} 
    \sum_{\substack{z_1, z_2 \in \Z^n \\ z_1, z_2 \neq 0}}
    f_1 (z_1 g) f_2 (z_2 g) \, d g 
    &= 
    \widehat{f}_1 (0) \widehat{f}_2 (0) 
    + \frac{1}{\zeta(n)}\sum_{\substack{j\geq 1 \\ k \neq 0}} \int_{\R^{n}}f_1(jz)f_2(kz) \, dz.
\end{align*} 
\end{theorem}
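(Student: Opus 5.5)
We would prove the formula by splitting the double sum according to the rank of the pair $(z_1,z_2)$ and handling each piece by the unfolding identity recalled above. For nonzero $z_1,z_2 \in \Z^n$, either $z_1,z_2$ are linearly independent, or they lie on a common line $\R v$ with $v \in \Z^n$ primitive. In the second case we may write $z_1 = j v$, $z_2 = k v$ with $j,k \neq 0$. The pair $(j,k)$ and the primitive $v$ are determined up to the simultaneous sign change $(v,j,k) \mapsto (-v,-j,-k)$, and we normalize $j \geq 1$. The integral therefore equals $I_{\mathrm{ind}} + I_{\mathrm{dep}}$, where
\begin{align*}
I_{\mathrm{ind}} = \int_{\xlat_n} \sum_{\substack{z_1,z_2 \in \Z^n \\ \text{lin. indep.}}} f_1(z_1 g) f_2(z_2 g)\, dg, \qquad I_{\mathrm{dep}} = \sum_{\substack{j \geq 1 \\ k \neq 0}} \int_{\xlat_n} \sum_{v \in \Z^n \text{ primitive}} f_1(jvg) f_2(kvg)\, dg .
\end{align*}
All terms are nonnegative, so rearrangements are justified by Tonelli.

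For $I_{\mathrm{dep}}$ we use the classical primitive Siegel mean value theorem. Primitive vectors form the single orbit $e_n \Gamma$, with stabilizer $P_1$. Unfolding as in the displayed identity, with $P_1 \backslash G \cong \R^n \setminus \{0\}$, gives
\[
\int_{\xlat_n} \sum_{v \text{ prim}} F(vg)\, dg = c_1 \int_{\R^n} F(z)\, dz .
\]
Taking $F(z) = f_1(jz) f_2(kz)$, it remains to identify $c_1 = 1/\zeta(n)$. To do this, apply the unfolding with $F$ an arbitrary test function and sum over dilations $m v$, $m \geq 1$. The full sum over $\Z^n \setminus \{0\}$ then has mean $c_1 \zeta(n) \int F$. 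Comparing with Siegel's theorem (mean $=\int F$), proved for instance by approximating $\int F$ by lattice-point counts $\frac{1}{R^n}\sum F(z/R)$, gives $c_1 \zeta(n) = 1$.

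For $I_{\mathrm{ind}}$, we unfold along the orbit of $\Gamma$ on $\mathrm{Mat}_{2\times n}(\Z)$. Linearly independent pairs are not a single $\Gamma$-orbit. We handle this by writing each pair as $(z_1,z_2) = A\,(w_1,w_2)$, where $A$ ranges over $2\times 2$ integer matrices of nonzero determinant modulo $\GL(2,\Z)$ (Hermite normal form), and $(w_1,w_2)$ is a primitive pair extending to a basis of $\Z^n$, i.e.\ lies in the orbit of $(e_{n-1},e_n)$. Since $n\ge 3$, $\SL(n,\Z)$ acts transitively on such primitive pairs. Unfolding with the stabilizer $P$ of $(e_{n-1},e_n)$ and the identification $P\backslash G \cong (\R^n)^2$ up to a null set, the primitive-pair sum has mean $c_2\int F(x,y)\,dx\,dy$. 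Summing over $A$ and changing variables $(x,y)\mapsto A(x,y)$ multiplies the integral by $|\det A|^{-n}$. Hence
\[
I_{\mathrm{ind}} = c_2 \sum_{A} |\det A|^{-n} \cdot \widehat f_1(0)\widehat f_2(0).
\]
The number of Hermite normal forms of determinant $d$ is $\sigma_1(d)$, so $\sum_A |\det A|^{-n} = \zeta(n)\zeta(n-1)$. The same argument with a general $F$ on $(\R^n)^2$, compared against the lattice-count normalization used above, gives $c_2\zeta(n)\zeta(n-1) = 1$, so $I_{\mathrm{ind}} = \widehat f_1(0)\widehat f_2(0)$.

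\textbf{Main obstacle.} The delicate step is this second one. It requires justifying the unfolding on the non-compact quotient, which needs finiteness of $\int_{\Gamma\cap P\backslash P}$; this is where $n\ge 3$ is used, since for $n=2$ the pair orbit is a single point up to scaling. It also requires pinning down the constant $c_2$. We would try to fix $c_2$ by a direct volume computation: write $\mathrm{vol}(\Gamma\cap P\backslash P)$ in terms of $\mathrm{vol}(\xlat_{n-2})$ and a compact torus quotient, and use the known formula $\mathrm{vol}(\xlat_m)=\zeta(2)\cdots\zeta(m)$ in the normalization where the total measure is one. If that bookkeeping becomes unwieldy, the fallback is a test-function argument. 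Take $F$ to be the indicator of $\{|x|,|y|\le R\}$ and let $R\to\infty$. The lattice count on the left is then asymptotic to $\int F$ by equidistribution of large balls, which identifies $c_2\zeta(n)\zeta(n-1)=1$ without computing volumes.
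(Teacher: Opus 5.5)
Your proposal is correct and is essentially the unfolding argument the paper sketches just before stating the theorem: take the stabilizer $P$ of $(e_{n-1};e_n)$, identify $P\backslash G\cong(\R^n)^2$ up to a null set, and fix the constant with a test function, made explicit by your Hermite-normal-form reduction $(z_1,z_2)=A(w_1,w_2)$ with $\sum_A|\det A|^{-n}=\zeta(n)\zeta(n-1)$, which is what the paper's ``(almost)'' glosses over. One correction to your obstacle paragraph: for $n=2$ the group $P$ is trivial, so finiteness of $\mathrm{vol}(\Gamma\cap P\backslash P)$ is not where $n\geq 3$ enters; it enters through the orbit $(e_{n-1};e_n)\SL(n,\R)$ being conull in $(\R^n)^2$, the convergence of $\zeta(n-1)$, and, in your $R\to\infty$ fixing of $c_2$, the domination $\#\{z\in\Z^n:\|zg\|\le R\}^2\ll R^{2n}\alpha(g)^2$ for $R\ge 1$ with $\alpha(g)=\max_i(\lambda_1\cdots\lambda_i)^{-1}\in L^2(\xlat_n)$, without which pointwise equidistribution plus Fatou only gives $c_2\zeta(n)\zeta(n-1)\geq 1$.
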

(Here and in the remainder of the paper, we write $\smash{\widehat{f}}(\xi) = \int_{\R^n} f(x)\exp(-2 \pi i x \cdot \xi) \, d x$ for the Fourier transform of a function $f$.) However, it must be pointed out that one is rather ``lucky'' in the $\SL(n, \R)$-case: In general, there is no reason to expect that the measure $dh$ on $P\backslash G$ is easy to determine concretely, nor that a single $G$-orbit fills the entire space being acted on. Indeed, when $G=\Sp(2n,\R)$, its right action on $\mathrm{Mat}_{2\times2n}(\R)$ partitions the space into infinitely many orbits of the form
\begin{align*}
\underbracket[1pt]{\begin{pmatrix}
0 & \cdots & 0 & 0 & 0 & \cdots & 0 & 1 \\
0 & \cdots & 0 & c & 0 & \cdots & 1 & 0
\end{pmatrix}}_{c \text{ appearing in the $(2,n)$'th entry}}
\cdot \, G = \left\{(x,y) \in \R^{2n}\times\R^{2n}: \sum_{i=1}^n(x_iy_{n+i}-x_{n+i}y_i) = c\right\}
\end{align*}
for each $c\in\R$ (up to orbits of smaller dimension). Thus, it does not seem easy to use this orbit classification as a starting point, and then -- for each of these orbits -- find a practically useful expression for the invariant measure and compute its weight, to arrive at a symplectic analogue of Theorem \ref{thm:RogersIntegrationFormula}.\footnote{It may be said that the results of \cite{AK16} are obtained via this path, by decomposing the sum $\smash{\sum_{\Gamma\cap P\backslash\Gamma}}$ into finer pieces. However, as already indicated, certain issues in the argument must be clarified: in the case of the proof of \cite[Lemma 5.1]{AK16} concerning the invariant measure, $y^*,y_i$ are apparently not well-defined.}

\subsection{This work}
We take a different approach to integrating products of Siegel transforms, which first appeared in Rogers' works \cite{Rog47,Rog55}, and was recently further developed and utilized by the second-named author \cite{Kim23,Kim24-2}, as well as \cite{GSVV25-2}. In a nutshell, this method goes as follows: Choose a certain family of Hecke operators $T_p$ indexed by prime $p$.
One can show, for a function $F$ on $\Gamma\backslash G$ that satisfies some mild conditions, that
\begin{align*}
T_pF \longrightarrow \frac{1}{\mathrm{vol}(\Gamma\backslash G)}\int_{\Gamma\backslash G} F(g)dg
\end{align*}
as $p \rightarrow \infty$.\footnote{This is reminiscent of the equidistribution of the Hecke points --- see e.g., \cite{COU01}. It cannot be used directly in our situation, however, since our $F$ diverges at the cusp.} Thus one would like to evaluate the $p$-limit of $T_pF$ for $F$ equal to the second-moment Siegel transform
\begin{align*}
F(g) = \sum_{z_1,z_2\in\Z^{2n}\setminus \{0\}} f(z_1g,z_2g)
\end{align*}
of a measurable function $f : (\R^{2n})^2\rightarrow\R$. 
It turns out that this objective reduces to the problem of counting lattice points inside a bounded set, subject to certain constraints. 

For $G = \Sp(2n,\R)$ and $\Gamma = \Sp(2n,\Z)$ and hence $\ylat_{2n} :=\Gamma \backslash G = \Sp(2n,\Z)\backslash\Sp(2n,\R)$, we solve this counting problem in two different ways, yielding two different expressions for the second moment formula.

\begin{theorem}\label{thm:main}
Let $n \geq 2$. Let $\ylat_{2n}$ be equipped with the right $\Sp(2n,\R)$-invariant probability measure $\mu$, and let $f:(\R^{2n})^2\rightarrow\R$ be a Borel measurable function. Then
\begin{align*}
\int_{\ylat_{2n}}\sum_{z_1,z_2\in\Z^{2n}\setminus \{0\}} f(z_1g,z_2g) \, d\mu 
= 
\frac{1}{\zeta(2n)} \sum_{\substack{j \geq 1 \\ k \in \Z}} 
\int_{\R^{2n}}\int_{L(k,z_1)}f(jz_1,z_2) \, d z_2 \, \frac{dz_1}{\|z_1\|} \\
+ \, \frac{1}{\zeta(2n)}\sum_{\substack{j \geq 1 \\ k \neq 0}}\int_{\R^{2n}}f(jz,kz) \, dz,
\end{align*}
where $J=\left( \begin{smallmatrix} & I_n \\ -I_n & \end{smallmatrix}\right)$ is the matrix for the symplectic form used to define $\Sp(2n, \R)$, and 
\begin{align*}
L(k,z_1) 
= 
\spn_\R(z_1J)^\perp +\frac{kz_1J}{\|z_1\|^2}
= 
\big\{ z_2 \in \R^{2n} : z_2 \cdot z_1 J = k \big\}.
\end{align*}
\end{theorem}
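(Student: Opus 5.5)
We would argue by splitting the double sum according to the geometry of the pair $(z_1,z_2)$, and then unfolding. Take $z_1\neq 0$ and write $z_1=jv$ with $j\ge 1$ and $v\in\Z^{2n}$ primitive. Since $vJ$ is again primitive, the quantity $k:=z_2\cdot vJ=\omega(v,z_2)$ is an integer. We split the sum into two pieces:
\begin{itemize}
\item[(a)] the pairs with $z_2\in \Q v$, i.e.\ $z_2=kv$ with $k\ne 0$ (the $k$ here is the multiple of $v$, not the symplectic pairing);
\item[(b)] all remaining pairs, grouped by $(j,k)\in \Z_{\ge1}\times\Z$ with $k=\omega(v,z_2)$.
\end{itemize}
For (a), the sum $\sum_{v\ \mathrm{prim}}\sum_{j\geq 1,\,k\ne0} f(jvg,kvg)$ is a primitive Siegel transform of $x\mapsto\sum f(jx,kx)$. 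Since $\Sp(2n,\Z)$ acts transitively on primitive vectors, the primitive Siegel mean value theorem over $\ylat_{2n}$ (unfolding over the stabilizer of $e_{2n}$) gives $\frac{1}{\zeta(2n)}\sum_{j\geq1,\,k\neq 0}\int f(jz,kz)\,dz$, which is the second term.

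For (b), we would unfold with $P=\mathrm{Stab}(e_{2n})$. Then $\Gamma\cap P\backslash\Gamma$ parametrizes the primitive $v$, and
\[
\int_{\ylat_{2n}}F\,d\mu=\frac{1}{\zeta(2n)}\int_{P\backslash G}\int_{\Gamma\cap P\backslash P}\;\sum_{j,k}\ \sum_{\substack{w\in\Z^{2n}\setminus \Q e_{2n}\\ \omega(e_{2n},w)=k}} f(je_{2n}pg,\,wpg)\,dp\,dg .
\]
Here the measures are normalized so that the one-vector case reproduces the primitive mean value constant $1/\zeta(2n)$. The group $P$ is a semidirect product of $\Sp(2n-2,\R)$ with a Heisenberg unipotent radical $N$. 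The set $\{w\in\Z^{2n}:\omega(e_{2n},w)=k\}$ is a translate of a rank-$(2n-1)$ lattice in the affine hyperplane $L(k,e_{2n})$. The Heisenberg group $N$ acts on this hyperplane by translations together with shears along $e_{2n}$. We would integrate first over the compact quotient $N\cap\Gamma\backslash N$. This replaces the lattice sum over $w$ by an integral of the form $\int_{\R^{2n-2}}\sum_{m}(\cdots)$, where the surviving discrete part is a sum over vectors $m\in\Z^{2n-2}\setminus\{0\}$ transformed by $\Sp(2n-2,\R)$. The vectors with $m=0$ are exactly the multiples of $e_{2n}$, which were removed in (a). Next we would integrate over $\Sp(2n-2,\Z)\backslash\Sp(2n-2,\R)$ and apply the (non-primitive) Siegel mean value theorem in dimension $2n-2$. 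This step is where $n\ge2$ is used; for $n=2$ it is the $\SL(2,\R)$ case. The result is the full Lebesgue integral over $L(k,e_{2n})$.

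Finally we move from $P\backslash G\cong\R^{2n}\setminus\{0\}$ (via $g\mapsto e_{2n}g$) to the variable $z_1$. A lattice $\{w:w\cdot vJ=0\}$ has covolume $\|vJ\|=\|v\|$ inside its hyperplane. When the hyperplane integral is written in coordinates adapted to $z_1=e_{2n}g$, this covolume produces the Jacobian $1/\|z_1\|$. Since the hyperplane $L(k,z_1)$ depends only on $z_1$, the result is $\frac1{\zeta(2n)}\sum_{j,k}\int\int_{L(k,z_1)}f(jz_1,z_2)\,dz_2\,\frac{dz_1}{\|z_1\|}$, which is the first term. For $k=0$ the excluded line $\R z_1$ is a null set in the hyperplane, so nothing is lost. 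All interchanges of sums and integrals are justified by Tonelli for $f\ge0$. Signed $f$ then follow by splitting $f=f^+-f^-$, whenever either side converges absolutely.

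The main obstacle is the normalization of the Haar measures in the fibration $\Gamma\cap P\backslash P\to \Sp(2n-2,\Z)\backslash\Sp(2n-2,\R)$ with fibre $N\cap\Gamma\backslash N$, and checking that the resulting constant is exactly $1/\|z_1\|$ with no extra factor. We would fix this constant by testing the formula on a single convenient $f$, for instance $f(z_1,z_2)=\phi(z_1)\psi(\omega(z_1,z_2))$, where both sides can be computed directly. Alternatively, one can run the Hecke-operator limit $T_pF\to\int F$ and count lattice points of given $\omega$-value in dilated boxes, which yields the same constant.
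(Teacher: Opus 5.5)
Your unfolding argument is correct, and it is a genuinely different proof from the paper's.

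\textbf{How the two proofs differ.} The paper never unfolds. It averages $F$ by the Hecke operators $T_p$ attached to $\Gamma\,\diag(I_n,pI_n)\,\Gamma$ and uses Lemma~\ref{lemma:eqdist}. It then shows by counting that the cosets $\Delta_{i,p}$, $i<n$, contribute nothing, with a separate argument for $n=2$, $i=1$. The $\Delta_{n,p}$ part is split by $\F_p$-rank. In the rank-two part the paper counts points of $\Z^{2n}g$ on the hyperplanes $z_1Jz_2^\intercal=kp$; the error terms there force control of the successive minima of $Z^\perp$ through a Horesh--Karasik input (Lemma~\ref{lemma:control}). Both main terms then appear as Riemann sums.

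You unfold only the single $\Gamma$-orbit of primitive vectors, along $P=\mathrm{Stab}(e_{2n})\cong\Sp(2n-2,\R)\ltimes N$, and handle the second vector by averaging over the compact Heisenberg fibre. So the obstacle raised in the introduction (infinitely many $G$-orbits on pairs, each needing an invariant measure and a weight) never arises. Your route is exact, needs no counting or error terms, and covers every nonnegative Borel $f$ at once by Tonelli. It also explains the shape of the formula:
\begin{itemize}
\item the hyperplanes $L(k,z_1)$ are $N$-orbits;
\item the factor $1/\|z_1\|$ is a Jacobian;
\item the $k=0$ term is Siegel's theorem on $\ylat_{2n-2}$, which is exactly where $n\ge2$ enters.
\end{itemize}
Its inputs are the standard Siegel mean value theorems on $\ylat_{2n}$ (primitive version, constant $1/\zeta(2n)$) and on $\ylat_{2n-2}$. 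The paper's Hecke method is the framework the authors hope to extend to higher moments, and it yields the parallel Fourier-analytic derivation of the rank-two term.

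\textbf{Two corrections to your sketch.} Neither breaks the argument.

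First, the fibre step is misdescribed. Put $f_0=-e_n$, so that $\omega(e_{2n},f_0)=1$. Let $W$ be the span of the standard basis vectors other than $e_n,e_{2n}$. For $a\in W$, $t\in\R$, let $n(a,t)\in N$ be the map that fixes $e_{2n}$, sends $w\mapsto w+\omega(a,w)e_{2n}$ on $W$, and sends $f_0\mapsto f_0+a+te_{2n}$.
\begin{itemize}
\item For $k\neq0$, $N$ acts simply transitively on $L(k,e_{2n})$, and the orbit map multiplies Haar measure by $|k|^{2n-1}$. Also $N\cap\Gamma$ has exactly $|k|^{2n-1}$ orbits on $\{w\in\Z^{2n}:\omega(e_{2n},w)=k\}$. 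So the fibre average alone already equals $\int_{L(k,e_{2n})}$, and no Siegel step is needed.
\item For $k=0$, $N$ acts only by the shears $se_{2n}+w'\mapsto(s+\omega(a,w'))e_{2n}+w'$. Since $a\mapsto\omega(a,w')\bmod1$ is equidistributed for $w'\neq0$, the average integrates out only the one-dimensional $e_{2n}$-direction. Writing $\phi(u)=f(je_{2n}g,ug)$, what remains is $\sum_{w'\in(W\cap\Z^{2n})\setminus\{0\}}\int_\R\phi(\sigma e_{2n}+w'h)\,d\sigma$ with $h\in\Sp(W)$, not an integral over $\R^{2n-2}$. Siegel's theorem on $\ylat_{2n-2}$ then finishes.
\end{itemize}

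Second, the normalization is not an obstacle, and your test function would not settle it anyway: $\phi(z_1)\psi(\omega(z_1,z_2))$ is not integrable in $z_2$, so both sides are $+\infty$. Instead, note that $\Gamma\cap P=(N\cap\Gamma)\rtimes\Sp(2n-2,\Z)$ and that $\Sp(2n-2,\R)$ preserves Haar measure on $N$. Hence the probability measure on $\Gamma\cap P\backslash P$ is the product of the probability measures on fibre and base, and the fibre computations above are exact. The map $u\mapsto ug$ from $L(k,e_{2n})$ onto $L(k,z_1)$ multiplies $(2n-1)$-volume by
\[
\|e_{2n}Jg^{-\intercal}\|=\|e_{2n}gJ\|=\|z_1\|,
\]
using $Jg^{-\intercal}=gJ$ as in the paper; this is the exact source of $1/\|z_1\|$. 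The outer constant $1/\zeta(2n)$ is fixed by your part (a), which uses the same unfolding.
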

Our next expression for the second moment strongly resembles the right-hand side of Theorem \ref{thm:RogersIntegrationFormula}, but contains an additional term; we can think of this as a distinctly symplectic signature that quantifies the added complexity (compared to $\xlat_{2n}$) of averaging over the much smaller space $\ylat_{2n}$. (We return to this matter in Section \ref{subsect:application_discrepancy}.)
\begin{theorem}\label{thm:main-Schwartz}
Consider the special case where $f(z_1,z_2) = f_1(z_1)f_2(z_2)$ is the product of two functions $f_1, f_2 : \R^{2n} \rightarrow \C$ which are either
\begin{enumerate}
    \item in the Schwartz class, 
    \item compactly supported and Riemann integrable,
    \item or Borel measurable, compactly supported, and bounded.
\end{enumerate}
Then with the same notation as above, we have
\begin{align*}
&\int_{\ylat_{2n}}\sum_{z_1,z_2\in\Z^{2n}\setminus \{0\}} f_1(z_1g) f_2(z_2g) \, d\mu (g) \\
&\quad = \widehat{f}_1 (0) \widehat{f}_2 (0) 
+ \frac{1}{\zeta(2n)}
\sum_{\substack{j \geq 1 \\ k \neq 0}}
\int_{\R^{2n}}f_1(jz) f_2(kz) \, dz
+ \frac{1}{\zeta(2n)}
\sum_{\substack{j \geq 1 \\ k \neq 0}}
\int_{\R^{2n}}f_1(jz)\widehat{f}_2(kzJ) \, dz.
\end{align*}
\end{theorem}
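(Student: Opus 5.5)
We derive Theorem \ref{thm:main-Schwartz} from Theorem \ref{thm:main} with $f(z_1,z_2)=f_1(z_1)f_2(z_2)$, treating the Schwartz case first and extending by approximation. The second term of Theorem \ref{thm:main} is already the middle term of the claim, so only the first term needs work. For fixed $z_1\neq 0$ we apply Poisson summation in $k$ to the inner sum
\begin{align*}
S(z_1):=\sum_{k\in\Z}\int_{L(k,z_1)} f_2(z_2)\,dz_2 .
\end{align*}
Put $w=z_1J$, so $\|w\|=\|z_1\|$. The hyperplanes $L(k,z_1)=\{z_2: z_2\cdot w=k\}$ are evenly spaced at distance $1/\|w\|$. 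Define $\phi(t)=\int_{\{z_2\cdot w = t\}} f_2$, with the hyperplane carrying its $(2n-1)$-dimensional Lebesgue measure. The coarea formula gives $\int_\R \phi(t)e^{-2\pi i t s}\,dt=\|w\|\widehat f_2(sw)$, so $\widehat\phi(s)=\|w\|\,\widehat f_2(sw)$. Poisson summation for $\phi$, which is Schwartz on $\R$, then gives
\begin{align*}
S(z_1)=\sum_k\phi(k)=\|z_1\|\sum_{k\in\Z}\widehat f_2(k z_1J).
\end{align*}
The factor $\|z_1\|$ cancels the measure $dz_1/\|z_1\|$ in Theorem \ref{thm:main}.

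The first term therefore equals
\begin{align*}
\frac{1}{\zeta(2n)}\sum_{j\ge1}\sum_{k\in\Z}\int_{\R^{2n}} f_1(jz_1)\widehat f_2(kz_1J)\,dz_1 .
\end{align*}
The terms with $k\neq0$ give the third term of the claim. For $k=0$ we get $\widehat f_2(0)\sum_{j\ge1}j^{-2n}\widehat f_1(0)=\zeta(2n)\widehat f_1(0)\widehat f_2(0)$, which after dividing by $\zeta(2n)$ is the main term. Since $n\ge2$, the series converge absolutely for Schwartz $f_1,f_2$. Indeed, the $(j,k)$ term is bounded by $\int |f_1(jz)||\widehat f_2(kzJ)|\,dz$, and substituting $u=jz$ shows the double sum is dominated by a Riemann-sum-like quantity that is finite. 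This justifies the interchanges via Fubini/Tonelli, applied to $|f_1|$ and to a majorant of $|f_2|$.

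\textbf{Main obstacle: cases (2) and (3).} Here $\widehat f_2$ need not decay, so Poisson summation is not directly available and the third term may converge only conditionally. We would sandwich the data. By linearity we reduce to real nonnegative $f_i$ after splitting into real/imaginary and positive/negative parts. We then approximate $f_i$ from above and below by Schwartz (or smooth compactly supported) functions $f_i^{\pm}$ with $\int (f_i^+-f_i^-)$ small. This is possible for Riemann integrable functions, and in case (3) it holds in $L^1$ for a suitable truncation. The left-hand side and the first two terms of the claim are monotone, or continuous in $L^1$ by Theorem \ref{thm:main} applied to $|f|$, so those terms converge. Because the identity holds for the approximants, the third term must converge to the corresponding expression for $f_1,f_2$. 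Making this last limit literally equal to the stated sum, with the sum interpreted appropriately (e.g.\ summed in $k$ symmetrically), is the delicate point. For case (3) it may only hold in that interpreted sense, and this is where we expect most of the effort to go.
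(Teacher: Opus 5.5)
Your Schwartz-case argument is the paper's: Poisson summation in $k$ plus the Fourier slice identity $\widehat\phi(s)=\|z_1\|\,\widehat f_2(s z_1J)$, then splitting off the $k=0$ term.

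The gap is in cases (2) and (3). The identity for the approximants only tells you that their third terms converge to the left-hand side minus the first two terms for $(f_1,f_2)$. That this limit equals $\frac{1}{\zeta(2n)}\sum_{j\ge1,\,k\neq0}\int f_1(jz)\widehat f_2(kzJ)\,dz$ is exactly what the theorem asserts in these cases. Your sentence ``the third term must converge to the corresponding expression'' assumes it without proof, and you then concede it may hold only in some interpreted sense. So as it stands the proposal does not prove (2) and (3).

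The missing idea is that conditional convergence is not an issue for $n\ge2$: the third term converges absolutely for all $f_1,f_2\in L^2(\R^{2n})$, with no decay of $\widehat f_2$ needed. By Cauchy--Schwarz, the substitution $w=kzJ$ (with $|\det J|=1$) and Plancherel,
\[
\int_{\R^{2n}}|f_1(jz)|\,|\widehat f_2(kzJ)|\,dz\le j^{-n}|k|^{-n}\,\|f_1\|_{L^2}\|f_2\|_{L^2}.
\]
Hence the double sum is at most $2\zeta(n)^2\|f_1\|_{L^2}\|f_2\|_{L^2}$. The same bound controls the rank-one term, and it gives absolute convergence in the Schwartz case in one line. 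So the third term is a bounded bilinear form on $L^2\times L^2$.

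To close the argument, mollify the bounded, compactly supported $f_i$. This gives Schwartz approximants converging in $L^2$, with uniform sup bounds and supports in a fixed ball; no two-sided sandwich is needed. These uniform bounds also make the right-hand side of Theorem~\ref{thm:main} for $|f_1^{(m)}(z_1)f_2^{(m)}(z_2)-f_1(z_1)f_2(z_2)|$ tend to zero, so the left-hand side passes to the limit as well (bare $L^1$-closeness would not suffice there). The identity then holds with the sum taken literally.

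The paper uses the same $L^2$ mechanism, but pointwise in $z_1$. It mollifies only $f_2$ and proves
\[
\sum_k(\mathcal{R}f_2)\bigl(z_1J/\|z_1\|,k/\|z_1\|\bigr)=\|z_1\|\sum_{\xi\in\Z}\widehat f_2(\xi z_1J)
\]
for a.e.\ $z_1$ by dominated convergence. The domination on the Fourier side comes from $\sum_\xi|\widehat f_2(\xi z_1J)|<\infty$ a.e., which it obtains by integrating over annuli and applying Cauchy--Schwarz and Plancherel.
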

\begin{remark}
For $f(z_1,z_2)$ of the form $f(z_1)f(z_2)$, the innermost integral in Theorem \ref{thm:main} over $L(k,z_1)$ can be seen as the Radon transform. This gives a natural connection between Theorems \ref{thm:main} and \ref{thm:main-Schwartz} via the Fourier slice theorem, which relates the Radon and Fourier transforms. We elaborate on this in Section \ref{subsect:relation_between_formulas}
\end{remark}
\begin{remark}
    Previous work by the first-named author used the results of \cite{KY17} to produce a prototype \cite[Thm. 5.3 and Prop. 5.5]{Hol23} of Theorem \ref{thm:main}, where the geometry of the right-hand side of the second-moment formula and the role of the Radon transform were less explicit than in Theorem \ref{thm:main}. However, the methods used in the current paper differ significantly from those of \cite{KY17} and \cite[Sect. 5]{Hol23}.
\end{remark}
\begin{remark}\label{remark:standard_vs_symplectic_Rogers} 
    Even though the new symplectic term may look asymmetric in $f_1$ and $f_2$, this is not the case: This follows from Parseval's theorem, the evenness of our test functions, and the fact that $J^{-1} = -J$. Therefore, the entire right-hand side of Theorem \ref{thm:main-Schwartz} is symmetric in $f_1$ and $f_2$ (as it should be).
\end{remark}

As is well-known, Theorem \ref{thm:RogersIntegrationFormula} of Rogers has higher-rank analogues, also proved in \cite{Rog55}. Since the first $n$ moments over $\ylat_{2n}$ are known to exist (cf. \cite[Theorem 1.2]{Hol23}), it is natural to ask about explicit formulas such as Theorems \ref{thm:main} and \ref{thm:main-Schwartz} for some or all of these higher moments. We hope to return to this problem in the future with the machinery developed in the present paper.

\subsection{Applications}

We provide four applications of our main result to demonstrate their usefulness in studying symplectic random lattices. The first three start from Theorem \ref{thm:main} and come down to estimates involving Bessel functions. The last one uses Theorem \ref{thm:main-Schwartz}. 

First, we upgrade Kelmer--Yu's bound on the mean square discrepancy of symplectic lattice-point counting \cite[Theorem 1.2 (0.11)]{KY17} to an asymptotic formula, in case $B$ is a ball of radius $R$ at origin (cf. also \cite[Theorem 1.3]{AK16}). In the notations of \cite{KY17} (also see Section \ref{subsect:application_discrepancy} below), we prove
\begin{align*}
\int_{\ylat_{2n}} |D(\Z^{2n}g,B)|^2d\mu_n = \frac{1}{\mathrm{vol}(B)}\left(\frac{4\zeta(2n-1)}{\zeta(2n)}-2+O_n(R^{-3})\right).
\end{align*}
The main term here is identical to its $\xlat_{2n}$-counterpart. Moreover, for $n\geq 4$, we can show that the error oscillates like $R^{-3}\sin(2\pi R^2)$ --- see Theorem \ref{thm:mean_sq_disc} below for the precise statements.

We also deal with a technical difficulty that was left unresolved in \cite{Hol23} and prove that the variance $\sigma^2$ from \cite[Theorem 1.1]{Hol23} is $2+o_n(1)$ --- see Theorem \ref{thm:holm_improvement} below. Previously, it was unclear even whether $\sigma \neq 0$. On a related note, Holm's construction of the set $\Omega_T$ can be given a natural Diophantine interpretation, and thus one may expect an symplectic analogue of the central limit theorem of Bj\"orklund--Gorodnik \cite{BG19} where sets of a similar Diophantine flavor are studied --- we will return to this topic in a forthcoming paper. 

We give two more applications that may be of interest in (but possibly also outside of) lattice-based cryptography: We prove that for all sufficiently large $n$, the shortest non-zero vector length in almost all lattices in $\ylat_{2n}$ is $\approx \smash{\sqrt{\pi e / n}}$ --- see Theorem \ref{thm:symplectic_GH} for a precise statement. This phenomenon is a rigorous instantiation of what is usually referred to as the \emph{Gaussian heuristic}, proved for $\SL(n,\R)$ in \cite[Theorem 6]{LN25} and for $\SL(n,K\otimes\R)$ for a number field $K$ in \cite[Theorem 3]{GSVV25}.

The last application concerns Gaussian theta functions and the \emph{smoothing parameter} $\eta_\varepsilon(L)$ of a lattice $L \subseteq \R^n$, first introduced in Micciancio--Regev \cite{MR07}; see also \cite{RS24}. Roughly speaking, this quantity measures the threshold at which the function
$$\varrho_s(L)=\sum_{v \in L} \exp(-\pi \|v\|^2/s^2)$$
satisfies $(\det L) \varrho_s (L) \approx s^n.$
We prove a second moment formula for $\varrho_s(L)$, and thereby obtain that $\eta_\varepsilon(L) = 1 + O(1/n)$ for almost all symplectic lattices $L \in \ylat_{2n}$ --- see Theorem \ref{thm:lattice_smoothing_parameter}. This is essentially an extension of \cite[Corollary 3]{PS26} to symplectic lattices. In doing this, we use Theorem \ref{thm:main-Schwartz} to compute the $L^2$-norm of $\varrho_s(L)$ over $\ylat_{2n}$, which may be of broader number-theoretic interest.

\subsection{Acknowledgment}

We thank Anish Ghosh and Jiyoung Han for their contributions at an early stage of this project. We would also like to thank Paul Nelson, Anders S\"odergren, and Shucheng Yu for helpful conversations and suggestions. The second-named author was partially supported by NSF CNS-2034176.

\section{Basics}

\subsection{The symplectic group}

Throughout let $n \geq 2$. We define $I_n$ to be the $n \times n$ identity matrix, and let
\begin{align*}
J = J_{2n} = \begin{pmatrix} 0 & I_n \\ -I_n & 0 \end{pmatrix}.
\end{align*}
The matrix $J$ defines an alternating bilinear form on $\R^{2n}$, given by
\begin{align*}
    \omega(z_1, z_2) = z_1 J z_2^\intercal = x_1 \cdot y_2 - x_2 \cdot y_1 \qquad (\text{for } z_i = (x_i, y_i) \in \R^n \times \R^n ).
\end{align*}
For any commutative ring $R$ with $1 \neq 0$, we define the group
\begin{align*}
\Sp(2n,R) = \{M \in \Mat_{2n \times 2_n}(R) : MJM^\intercal = J\},
\end{align*}
which consists of all $R$-linear transformations that preserve $\omega$. In the case $R = \R$, we obtain the \textit{symplectic group}, which is a semisimple Lie group of dimension $n(2n+1).$

If $L \subseteq \R^{2n}$ is a lattice (of full rank), we say that $L$ is \textit{symplectic} if $\omega(L, L) \subseteq \Z$. By using a symplectic version of the Gram--Schmidt process, one can show (see \cite[Prop. 2.4]{Hol23}) that 
\begin{align*}
    L \text{ is symplectic} \Longleftrightarrow L = \Z^{2n} g \text{ for some } g \in \Sp (2n, \R).
\end{align*}
The space $\ylat_{2n}$ of all symplectic lattices in $\R^{2n}$ can therefore be identified with the quotient space $\Sp(2n, \Z) \backslash \Sp(2n, \R).$ 

In what follows, let us write $G = \Sp(2n,\R), \Gamma = \Sp(2n,\Z),$ and $\ylat_{2n} = \Gamma \backslash G$. Since semisimple groups and discrete groups are unimodular (as locally compact groups), the quotient $\Gamma \backslash G = \ylat_{2n}$ inherits a right-$G$-invariant measure from the Haar measure on $G$. In fact, by reduction theory on $G$ (see, e.g., \cite[Appendix A]{Hol23}), this measure is finite and can therefore be normalized to yield a probability measure $\mu$ on $\ylat_{2n}.$ We thus endow $\ylat_{2n}$ with this right $G$-invariant probability measure $\mu$. 

It is clear that any $M \in \Sp(2n, \R)$ must be invertible (with determinant $\pm 1$). As a matter of fact, we have $\Sp(2n, \R) \subseteq \SL(2n, \R)$, which induces an inclusion of quotient spaces, and identifies $\ylat_{2n}$ with a submanifold of $\xlat_{2n}$ of strictly smaller dimension. In particular, $\ylat_{2n}$ is a nullset in the space $\xlat_{2n}.$ 

\subsection{Hecke operators on $\ylat_{2n}$}

Let $p$ be a prime and write $\Lambda = \SL(n,\Z)$. For $i=0,\ldots, n$, let $\Sigma_{i,p}$ be the set of all $n \times n$ upper-triangular integral matrices $(a_{jk})_{1\leq j,k \leq n}$ for which there exists $1 \leq k_1 < \ldots <k_i \leq n$ such that 
\begin{enumerate}[(i)]
\item $a_{kk} = p$ if $k =k_\ell$ for some $\ell$, otherwise $a_{kk} = 1$;

\item for $j < k_\ell$, $0 \leq a_{j k_\ell} < p$, unless if $j = k_{\ell'}$ for $\ell' \neq \ell$, in which case $a_{hk_\ell}=0$;

\item all the remaining entries are zero.
\end{enumerate}
As is well-known, it can be shown by linear algebra (see for example \cite[Lemma 3.3]{Kim24}) that $\Sigma_{i,p}$ forms the set of the right coset representatives of $\Lambda$ in the double coset
\begin{align*}
\Lambda \cdot \diag(\underbracket[1pt]{1,\ldots,1}_{n-i},\underbracket[1pt]{\, p,\ldots,p}_i) \cdot \Lambda.
\end{align*}

It is shown in \cite[Lemma 3.49]{And09} that each right coset representatives of the double coset
\begin{align}\label{eq:ourcoset}
\Gamma \begin{pmatrix} I_n & \\  & pI_n\end{pmatrix}\Gamma
\end{align}
can be written in the form
\begin{align*}
\begin{pmatrix} pD^{-\intercal} & B \\ & D \end{pmatrix},
\end{align*}
where $B,D \in \mathrm{Mat}_{n\times n}(\Z)$ with $D \in \Sigma_{i,p}$ for some $i=0,\ldots,n$, and $B$ satisfies $B^\intercal D = D^\intercal B$ and can be chosen uniquely modulo the equivalence relation
\begin{align} \label{eq:eqclass_B}
B \sim B' \mbox{ (mod $D$)} \Leftrightarrow (B-B')D^{-1} \in \mathrm{Mat}_{n\times n}(\Z).
\end{align}

For each $i$, let us refer to the set of such representatives as $\Delta_{i,p}$, and write $\Delta_p = \bigcup_i \Delta_{i,p}$. Clearly

\begin{align*}
\Delta_{0,p} = \left\{ \begin{pmatrix} p I_n & \\ & I_n\end{pmatrix} \right\}, 
\quad \Delta_{n,p} = \left\{\begin{pmatrix} I_n & B \\  & pI_n\end{pmatrix} : 
\! \begin{array}{l}
     B=(b_{jk}), \, b_{jk}=b_{kj}, \\
     \text{and } 0\leq b_{jk} < p  
\end{array} \right\}.
\end{align*}

It remains to describe $\Delta_{i,p}$ for the intermediate $i$'s. Given $D \in \Sigma_{i,p}$, there exists a permutation matrix (that is, integral and orthonormal) $O$ of order $2$ such that
\begin{align*}
ODO^\intercal = \begin{pmatrix} I_{n-i} & \tilde D \\ & pI_i \end{pmatrix},
\end{align*}
and the conditions on $B$ --- $B^\intercal D = D^\intercal B$ and uniqueness under \eqref{eq:eqclass_B} --- are preserved under conjugation by $O$. Writing $\{1,\ldots,n\} = \{k_1<\ldots <k_i\} \coprod \{k'_1 <\ldots <k'_{n-i}\}$ where the $k_i$'s are determined by $D \in \Sigma_{i,p}$ as described earlier, such $O$ can be chosen to correspond to the permutation $(k_1 k'_{n-i})\ldots(k_rk'_{n-r})$ for the largest $r$ such that $k_r \leq n-i$. Hence it suffices to consider the special case in which
\begin{align} \label{eq:D_std}
D = \begin{pmatrix} I_{n-i} & \tilde D \\ & pI_i \end{pmatrix}
\end{align}
and conjugate back by $O^{-1}=O^\intercal =O$ to obtain the general case.

\begin{lemma} \label{lemma:reps}
Let $D$ be as in \eqref{eq:D_std}. Then one can choose the set of representatives $B$ of the equivalence classes of the relation \eqref{eq:eqclass_B} satisfying $B^\intercal D = D^\intercal B$ to consist of the matrices of the form
\begin{align*}
\begin{pmatrix} 0 & 0 \\ 0 & B_{22}\end{pmatrix},
\end{align*}
where $B_{22}$ is an $i \times i$ integral symmetric matrix with entries in $\{0,\ldots,p-1\}$.
\end{lemma}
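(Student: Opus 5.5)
Write $B$ in blocks $B = \left(\begin{smallmatrix} B_{11} & B_{12} \\ B_{21} & B_{22}\end{smallmatrix}\right)$ conformally with \eqref{eq:D_std}, with $B_{11}$ of size $(n-i)\times(n-i)$ and $B_{22}$ of size $i\times i$. The plan is to use the freedom $B \mapsto B + MD$ with $M \in \Mat_{n\times n}(\Z)$, which is exactly the relation \eqref{eq:eqclass_B}, to kill the first block column. Then we read off what the symmetry condition $B^\intercal D = D^\intercal B$ forces on the rest. Finally we count, to check that distinct $B_{22}$ give inequivalent classes.

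\textbf{Step 1 (reduction).} Compute
\begin{align*}
MD = \begin{pmatrix} M_{11} & M_{11}\tilde D + pM_{12} \\ M_{21} & M_{21}\tilde D + pM_{22}\end{pmatrix}.
\end{align*}
Choosing $M_{11} = -B_{11}$ and $M_{21} = -B_{21}$ makes the first block column zero. The remaining freedom is $M_{11}=M_{21}=0$, under which the second block column can be shifted by arbitrary elements of $p\,\Mat(\Z)$. So every class has a representative of the form $B = \left(\begin{smallmatrix} 0 & B_{12} \\ 0 & B_{22}\end{smallmatrix}\right)$, with the entries of $B_{12}, B_{22}$ taken in $\{0,\dots,p-1\}$.

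\textbf{Step 2 (symmetry).} For $B$ of this shape, $D^\intercal B$ equals $\left(\begin{smallmatrix} 0 & B_{12} \\ 0 & \tilde D^\intercal B_{12} + pB_{22}\end{smallmatrix}\right)$. On the other side, $B^\intercal D$ equals $\left(\begin{smallmatrix} 0 & 0 \\ B_{12}^\intercal & B_{12}^\intercal \tilde D + B_{22}^\intercal p\end{smallmatrix}\right)$. Comparing the $(1,2)$ blocks gives $B_{12}=0$. The $(2,2)$ block then says $pB_{22} = pB_{22}^\intercal$, so $B_{22}$ is symmetric. Conversely, any such $B$ satisfies the condition, and symmetry survives reduction of the entries mod $p$. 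Hence every admissible class has a representative of the claimed form.

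\textbf{Step 3 (distinctness).} Suppose two such matrices differ by $\left(\begin{smallmatrix}0&0\\0&E\end{smallmatrix}\right)$ and this difference lies in $\Mat(\Z)D$. Since
\begin{align*}
D^{-1} = \begin{pmatrix} I & -p^{-1}\tilde D \\ 0 & p^{-1}I\end{pmatrix},
\end{align*}
multiplying on the right by $D^{-1}$ gives $\left(\begin{smallmatrix}0&0\\0&p^{-1}E\end{smallmatrix}\right)$. This is integral exactly when $p \mid E$, so $E=0$ for entries in $\{0,\dots,p-1\}$.

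The only step requiring care is Step 1. The point to watch is that making the first block column vanish must use the full $M_{11}, M_{21}$ freedom, and it must be done before imposing symmetry, since symmetry is a property of the class and not of an arbitrary representative. I expect no real obstacle there.
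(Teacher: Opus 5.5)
Your Step 2 applies the symmetry condition to a matrix that is not known to satisfy it, and the reason you give for this in your closing paragraph is false. The condition $B^\intercal D = D^\intercal B$ says exactly that $BD^{-1}$ is symmetric. Since $(B+MD)D^{-1} = BD^{-1} + M$, the condition survives $B \mapsto B+MD$ only when $M$ is symmetric. So it is \emph{not} a property of the $\sim$-class: $0$ satisfies it, but $MD \sim 0$ does not for any non-symmetric integral $M$.

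In Step 1 you take an arbitrary $M$: $M_{11}=-B_{11}$, $M_{21}=-B_{21}$, with $M_{12}$ and $M_{22}$ otherwise unconstrained. So the reduced matrix $\left(\begin{smallmatrix} 0 & B_{12} \\ 0 & B_{22}\end{smallmatrix}\right)$ is not known to satisfy $B^\intercal D = D^\intercal B$. Your Step 2 reads off $B_{12}=0$ and $B_{22}=B_{22}^\intercal$ from that condition, so it has no hypothesis to apply to. As written, the argument never uses the admissibility of the $B$ you started with. Step 1 on its own shows only that every integral $B$, admissible or not, is equivalent to one with zero first block column.

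The repair is to impose the condition on the original $B$ first, which is what the paper does. From $BD^{-1}$ symmetric you get three facts: $B_{11}=B_{11}^\intercal$, $B_{12} = B_{11}\tilde D + pB_{21}^\intercal$, and $B_{22}-B_{21}\tilde D$ is symmetric. With your choice $M_{11}=-B_{11}$, $M_{21}=-B_{21}$:
\begin{enumerate}
\item The $(1,2)$ block of $B+MD$ becomes $p(B_{21}^\intercal + M_{12})$. Taking $M_{12}=-B_{21}^\intercal$ makes it $0$.
\item The $(2,2)$ block becomes $B_{22}-B_{21}\tilde D + pM_{22}$. Its entrywise reduction into $\{0,\ldots,p-1\}$ is symmetric.
\end{enumerate}
Equivalently, the $M$ that performs your reduction is itself symmetric, so admissibility is preserved. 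The paper makes this explicit: it subtracts $MD$ with $M=\left(\begin{smallmatrix} B_{11} & B_{21}^\intercal \\ B_{21} & A\end{smallmatrix}\right)$ and $A$ symmetric. With this fix, your Step 3 (uniqueness, by computing $\left(\begin{smallmatrix}0&0\\0&E\end{smallmatrix}\right)D^{-1}$) is correct, and it is more explicit than the paper's closing sentence.
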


As a consequence, in general, an element of $\Delta_{i,p}$ has the form
\begin{align*}
\begin{pmatrix} O & \\ & O \end{pmatrix}\begin{pmatrix} pD^{-\intercal} & B \\ & D \end{pmatrix}\begin{pmatrix} O & \\ & O \end{pmatrix}
\end{align*}
where $D$ is of the form \eqref{eq:D_std} and $B$ is as described in Lemma \ref{lemma:reps}.
\begin{proof}
Write
\begin{align*}
B = \begin{pmatrix} B_{11} & B_{12} \\ B_{21} & B_{22} \end{pmatrix},
\end{align*}
where $B_{22}$ is an $i \times i$ matrix, and the rest are sized accordingly. Since
\begin{align*}
B^\intercal D = 
\begin{pmatrix}
B_{11}^\intercal  & B_{11}^\intercal \tilde D+pB_{21}^\intercal  \\ B_{12}^\intercal  & B_{12}^\intercal \tilde D + pB_{22}^\intercal 
\end{pmatrix},
D^\intercal B =
\begin{pmatrix}
B_{11} & B_{12} \\ \tilde D^\intercal B_{11} + pB_{21} & \tilde D^\intercal B_{12} + pB_{22}
\end{pmatrix},
\end{align*}
and these two matrices must be equal, $B_{11}$ and $B_{12}^\intercal \tilde D + pB_{22}^\intercal $ must be symmetric, and $B_{12} = B_{11}^\intercal \tilde D +pB_{21}^\intercal $.

One also computes that
\begin{align*}
BD^{-1} =
\begin{pmatrix}
B_{11} & -p^{-1}B_{11}\tilde D + p^{-1}B_{12} \\ B_{21} & -p^{-1}B_{21}\tilde D + p^{-1}B_{22}
\end{pmatrix},
\end{align*}
and thus $B \sim 0$ means that $B_{11}, B_{21}$ are integral and $B_{12} \equiv_p B_{11}\tilde D, B_{22} \equiv_p B_{21}\tilde D$. From these facts, one can check that for any symmetric $i \times i$ matrix $A$, $B$ is equivalent to
\begin{align*}
B -
\begin{pmatrix}
B_{11} & B_{11}\tilde D \\ 0 & 0
\end{pmatrix} -
\begin{pmatrix}
0 & pB_{21}^\intercal  \\ B_{21} & B_{21}\tilde D + pA
\end{pmatrix}
=\begin{pmatrix}
0 & 0 \\ 0 & B_{22}'
\end{pmatrix},
\end{align*}
and that $B_{22}'$ is symmetric. By adjusting $A$, $B_{22}'$ can be chosen uniquely (mod $p$).
\end{proof}

It is clear that $|\Delta_{n,p}| = p^{n(n+1)/2}$, and that in general $|\Delta_{i,p}| \ll_n p^{(n-i)i+i(i+1)/2}$, whose exponent is increasing in the range $0 \leq i \leq n$. Therefore, $|\Delta_p| =p^{n(n+1)/2}(1+O_n(p^{-1}))$, and for all sufficiently large $p$, $\Delta_{n,p}$ has the dominating contribution.

Let us denote by $T_p$ the Hecke operator on $\ylat_{2n}$ associated with the double coset \eqref{eq:ourcoset}, namely,
\begin{align*}
T_pF(g) = \frac{1}{|\Delta_p|}\sum_{h \in \Delta_p}F(p^{-\frac{1}{2}}h g)
\end{align*}
for a function $F$ on $\ylat_{2n}$. The following lemma is crucial for our approach.

\begin{lemma}\label{lemma:eqdist}
Suppose $F$ is a measurable function on $\ylat_{2n}$, and that $\lim_{p \rightarrow \infty} T_pF(g)$ exists and has the same value $I <\infty$ for almost every $g \in \ylat_{2n}$. Then $F$ is integrable, and $I = \int_
{\ylat_{2n}} Fd\mu$.
\end{lemma}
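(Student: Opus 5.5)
The plan is to squeeze the limit $I$ between two inequalities: an upper bound on $\int F\,d\mu$ from the equidistribution of Hecke operators on bounded functions, and a lower bound from Fatou's lemma. Throughout I will take $F\ge 0$, which is the case in the applications: $F$ is the second-moment Siegel transform of a nonnegative $f$. For signed $F$ I would apply the argument to $F^\pm$ separately. That requires the hypothesis for $|F|$, so I would either add it or restrict the statement to $F\ge 0$.

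\emph{Step 1: $T_p$ preserves integrals of nonnegative functions.} For each $h\in\Delta_p$ the map $g\mapsto p^{-1/2}hg$ is not well defined on $\ylat_{2n}$, but the full sum over $\Delta_p$ is. I would pass to the finite-index subgroup $\Gamma_h=\Gamma\cap h^{-1}\Gamma h$ and unfold. Since left translation by $p^{-1/2}h$ preserves Haar measure on $G$, and $\Delta_p$ enumerates $\Gamma\backslash\Gamma\,\diag(I_n,pI_n)\,\Gamma$, the standard computation gives
\begin{align*}
\int_{\ylat_{2n}}T_pF\,d\mu=\int_{\ylat_{2n}}F\,d\mu
\end{align*}
for every measurable $F\ge 0$, with both sides allowed to be infinite. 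The normalization $1/|\Delta_p|$ is exactly what makes the constant function $1$ fixed.

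\emph{Step 2: lower bound via Fatou.} By hypothesis $T_pF\to I$ almost everywhere, and $\mu$ is a probability measure. Fatou's lemma and Step 1 then give
\begin{align*}
I=\int I\,d\mu\le\liminf_p\int T_pF\,d\mu=\int F\,d\mu .
\end{align*}
This inequality alone does not give integrability, so the reverse inequality is the real content of the lemma.

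\emph{Step 3: upper bound via truncation and Hecke equidistribution.} This is the main step. Let $F_M=\min(F,M)$, which is bounded and hence in $L^2(\ylat_{2n})$. The operators $T_p$ are self-adjoint averaging operators, up to the involution $g\mapsto g^{-\intercal}$. By the equidistribution of Hecke points of Clozel--Oh--Ullmo \cite{COU01}, which rests on property (T)/(τ) for $\Sp(2n)$ with $n\ge 2$, their operator norm on $L^2_0(\ylat_{2n})$ tends to $0$ as $p\to\infty$. Hence $T_pF_M\to\int F_M\,d\mu$ in $L^2$, and therefore almost everywhere along a subsequence $p_k$.

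Positivity gives $T_pF_M\le T_pF$ pointwise. Taking the limit along $p_k$, and intersecting countably many full-measure sets for $M\in\mathbb{N}$, yields
\begin{align*}
\int F_M\,d\mu\le I \qquad\text{for all } M.
\end{align*}
Monotone convergence as $M\to\infty$ gives $\int F\,d\mu\le I<\infty$. So $F$ is integrable, and together with Step 2 we get $I=\int F\,d\mu$.

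The main obstacle is Step 3. It requires a quantitative, or at least qualitative, $L^2$ decay of the Hecke operators on the specific double coset \eqref{eq:ourcoset}, with the $p^{-1/2}$ normalization placing it in $\Sp(2n,\R)$. I would first try to cite the spectral-gap bounds of \cite{COU01} directly. If they do not apply verbatim, I would reduce to the matrix coefficient decay of $G$ acting on $L^2_0(\ylat_{2n})$, evaluated at the elements $p^{-1/2}h$, whose Cartan components $\diag(p^{1/2}I_n,p^{-1/2}I_n)$ tend to infinity in $G$.
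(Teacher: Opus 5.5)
Your argument is correct for $F\ge 0$, but it takes a different route from the paper. The paper transplants Kim's Lemmas~6--7. As the choice $\widetilde{\mathcal M}_p=\{h^\intercal : h\in\Delta_p\}$ indicates, that argument works by duality: $T_pF$ is paired against bounded test functions, the Hecke operator is moved onto the test function, and the Hecke-equidistribution input is applied only to nice bounded functions. A Fatou-type argument then yields integrability and the value of $I$.

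You instead keep $T_p$ on $F$ and deal with the cusp by truncation. Positivity gives $T_pF\ge T_p\min(F,M)$, which is exactly what gets around the obstruction the paper flags in its footnote (that $F$ is unbounded). This avoids identifying the adjoint operator. As written, though, you use a stronger input than necessary: the operator-norm decay $\|T_p|_{L^2_0}\|\to 0$. That decay does hold here, via the uniform spectral bounds underlying \cite{COU01}. But weak convergence suffices. On an Egorov set $E$ of positive measure where $T_pF\to I$ uniformly, $\int_E T_pF_M\,d\mu\to\mu(E)\int F_M\,d\mu$ already gives $\int F_M\,d\mu\le I$. Phrased that way, your proof needs essentially the same input as the paper's.

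Two side remarks. First, your fallback in Step~3 would not work. The individual maps $g\mapsto p^{-1/2}hg$ do not act on $L^2(\ylat_{2n})$. The unitary $\Sp(2n,\R)$-action there is by right translation, which commutes with $T_p$, so Howe--Moore for the real group says nothing about $\|T_p|_{L^2_0}\|$. The genuine input is $p$-adic: uniform decay of spherical functions of $\mathrm{PGSp}_{2n}(\Q_p)$. As a bookkeeping point, $p^{-1/2}h\notin\Sp(2n,\Q)$, so to quote \cite{COU01} literally you should pass to the adjoint group, using $-I\in\Gamma$; there $\diag(I_n,pI_n)$ becomes rational.

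Second, your caution about signed $F$ is well founded. The paper's closing remark, that one may write $F=F^+-F^-$, does not transfer the hypothesis ``$T_pF\to I$ a.e.'' to $F^\pm$. Restricting to $F\ge 0$ is the version that is actually established. It covers every use in the paper, since there $f$ can be split, or reduced to products of indicators, before forming $F$.
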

\begin{proof}
The proof is identical to the proofs of Lemmas 6 and 7 of Kim \cite{Kim23}, after only the following small adjustment: in the proof of \cite[Lemma 6]{Kim23}, take
\begin{align*}
&\alpha_p = \begin{pmatrix} I_n & \\ & pI_n \end{pmatrix}, \quad \mathcal M_p = \Delta_p \mbox{ as we defined above}, \quad \widetilde{\mathcal M}_p = \{h^\intercal :h \in \Delta_p\}.
\end{align*}

Also, notice that, in the proof of \cite[Lemma 7]{Kim23}, it is unnecessary to assume that the function $\rho$ there ($F$ in our notation) is integrable --- it follows from other assumptions, as the proof there shows --- and that the condition $\rho \geq 0$ can be dropped, since any measurable function is the difference of two nonnegative measurable functions.
\end{proof}

\subsection{Outline}

Let $f:(\R^{2n})^2 \rightarrow \R$ be a bounded and compactly supported Riemann integrable function. The bulk of this paper is devoted to the study of $T_pF(g)$ for
\begin{align} \label{eq:2strans}
F(g) = \sum_{z_1,z_2 \in \Z^{2n}\setminus \{0\}} f(z_1g,z_2g).
\end{align}
In particular, we will show that it converges to the same value in the $p$ limit, regardless of $g$. This allows us to apply Lemma \ref{lemma:eqdist} and thereby obtain the symplectic Rogers formula for $f$. Standard approximation arguments then extend the formula to any measurable $f$.

It will be shown in Section \ref{sec:Delta_i} below that as $p \rightarrow \infty$
\begin{align*}
\frac{1}{|\Delta_p|}\sum_{h \in \Delta_{i,p}} F(p^{-\frac{1}{2}}hg)
\end{align*}
vanishes unless $i=n$. Hence it suffices to consider
\begin{align*}
\frac{1}{p^{n(n+1)/2}}\sum_{h \in \Delta_{n,p}}F(p^{-\frac{1}{2}}hg),
\end{align*}
or equivalently, unpacking $\Delta_{n,p}$ and $F$,
\begin{align}\label{eq:toest_main}
\frac{1}{p^{n(n+1)/2}}\sum_{B} \sum_{z_1,z_2 \in \Z^{2n}\setminus \{0\}} f(p^{-\frac{1}{2}}(x_1;x_1B+py_1)g,p^{-\frac{1}{2}}(x_2;x_2B+py_2)g),
\end{align}
where we write $z_i=(x_i;y_i)$ for $x_i,y_i \in \Z^n$, and the sum over $B$ runs over all $n \times n$ symmetric matrices with entries in $\{0,\ldots,p-1\}$.

We further parse the above sum according to whether the $\F_p$-rank of $\{x_1,x_2\}$ is $1$ or $2$. The rank $1$ case will be handled in Section \ref{sec:mainest_rk1}. The rank $2$ case will be evaluated in two different ways, in Sections 4.2-3 and 4.4, respectively. It turns out that the two distinct formulas thus obtained are succinctly related by the Fourier slice formula.

The following statement, of which we will only use the cases $k=1,2$, forms another important component of our machinery.

\begin{proposition}\label{prop:eqdist}
Let $x = \{x_1,\ldots,x_k\} \subseteq \F_p^n$ be linearly independent. Consider the map $T_x: \smash{\F_p^{n(n+1)/2}} \rightarrow \F_p^{kn}$ defined by $B \mapsto (x_1B,\ldots,x_kB)$.
Then
\[
\im T_x=\left\{(y_1, \ldots, y_k)\in \F_p^{kn}: (x_i;y_i)J(x_j;y_j)^\intercal  \equiv 0 \, \, (\mathrm{mod} \, p) \text{ for all } 1 \leq i,j \leq k\right\}.
\]
In particular, $\dim \ker T_x = (n-k)(n-k+1)/2$.
\end{proposition}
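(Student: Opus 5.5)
The plan is to prove the inclusion $\im T_x \subseteq$ RHS directly, then match dimensions by computing $\dim\ker T_x$ and the dimension of the right-hand side.

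\textbf{Inclusion.} For $z_i=(x_i;y_i)$ and $z_j=(x_j;y_j)$ we have $(x_i;y_i)J(x_j;y_j)^\intercal = x_i\cdot y_j - y_i\cdot x_j$. If $y_i = x_iB$ with $B$ symmetric, this equals $x_iB^\intercal x_j^\intercal - x_iBx_j^\intercal = 0$. Hence $\im T_x$ lies in the subspace $W$ cut out by the $\binom{k}{2}$ conditions $x_i\cdot y_j = x_j\cdot y_i$ for $i<j$; the diagonal conditions $i=j$ are automatic.

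\textbf{Reduction by change of basis.} Since $x_1,\ldots,x_k$ are independent, choose $P\in\GL(n,\F_p)$ whose first $k$ rows are $x_1,\ldots,x_k$, so $x_i = e_iP$. Under $B\mapsto B' = PBP^\intercal$, which is a bijection on symmetric matrices, we get $x_iB = e_iP^{-1}\cdots$; more conveniently, $T_x(B)P^\intercal = (e_iB')_i$. The constraints also transform compatibly: $x_i\cdot y_j = e_iP\,y_j^\intercal$, and putting $y'_j = y_jP^{-1}$ intertwines them correctly. The intended form is $y_j = y'_jP$, and I would verify the bookkeeping so that everything reduces to the standard case $x_i = e_i$.

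\textbf{Kernel and dimension count in the standard case.} When $x_i = e_i$, the map $T_x$ sends $B$ to its first $k$ rows, and its kernel consists of the symmetric matrices whose first $k$ rows (and hence first $k$ columns) vanish. These are the symmetric $(n-k)\times(n-k)$ blocks, giving $\dim\ker T_x = (n-k)(n-k+1)/2$. By rank–nullity,
\[
\dim\im T_x = \tfrac{n(n+1)}{2} - \tfrac{(n-k)(n-k+1)}{2} = kn - \tfrac{k(k-1)}{2}.
\]
On the other side, in the standard case the conditions read $y_j[i] = y_i[j]$ for $i<j\le k$. These are $\binom{k}{2}$ independent linear conditions, since each involves a distinct pair of coordinates. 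So $\dim W = kn - \binom{k}{2}$, which equals $\dim\im T_x$, and the inclusion becomes an equality.

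The only delicate step is making the change of basis consistent, that is, checking that $W$ is carried to the standard $W$ under $y\mapsto yP^{-1}$-type maps. Alternatively, one can avoid the reduction entirely: the kernel is $\{B : x_iB = 0 \ \forall i\}$, which is the space of symmetric bilinear forms on $\F_p^n/\langle x_1,\ldots,x_k\rangle$-dual. More precisely, it consists of the forms vanishing on $U\times\F_p^n$ with $U = \spn(x_i)$, and this space has dimension $(n-k)(n-k+1)/2$. The rank of the $\binom{k}{2}$ constraints defining $W$ follows from the independence of the $x_i$, since the functionals $y\mapsto x_i\cdot y_j - x_j\cdot y_i$ are independent. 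I expect this coordinate-free route to be the cleanest.
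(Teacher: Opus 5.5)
Your proof is correct and follows the paper's route: the same inclusion check, the same reduction to $x_i=e_i$ via an invertible matrix with first rows $x_1,\dots,x_k$, and then the standard case. There the paper proves surjectivity constructively (a tuple with $y_j[i]=y_i[j]$ is just the first $k$ rows of a symmetric matrix) and reads off the kernel dimension afterwards, whereas you compute the kernel first and conclude by rank--nullity, which is an equivalent variant.

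One bookkeeping correction: your own identity $T_x(B)P^\intercal=T_e(PBP^\intercal)$ shows that the intertwining substitution is $y_j'=y_jP^\intercal$, not $y_jP^{-1}$. This is what gives $x_i\cdot y_j=e_i\cdot y_j'$, and it is the inverse of the paper's map $r_X$.
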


\begin{proof}
The image of $T_x$ does satisfy $(x_i;y_i)J(x_j;y_j)^\intercal = (y_i,y_j) \cdot(x_j,-x_i) \equiv 0$ (mod $p$), so it remains to show that it surjects. By the change of coordinates, we may assume $x_i=e_i$, where $e_i \in \F_p^n$ is the vector whose $i$'th entry is $1$ and all the other entries are $0$.\\[-0.7em]
{\begin{minipage}[t]{0.7\textwidth}\vspace{-0pt}
More precisely, the diagram to the right commutes, where $e = \{e_1,\ldots,e_k\}$, $X$ is an element of $\GL(n,\F_p)$ whose first $k$ rows are $x_1,\ldots, x_k$, $c_X(B) = X^{-1}BX^{-\intercal}$, and \\[-1em]
\begin{align*}
    r_X(y_1,\ldots,y_k)= (y_1X^{-\intercal},\ldots,y_kX^{-\intercal}).\\[-1em]
\end{align*}
Hence it suffices to show that $T_e$ surjects.
\end{minipage} \hfill
\begin{minipage}[t]{0.25\textwidth}\vspace{0pt}
\begin{center} \vspace{0pt}
\begin{tikzcd}[column sep = large]
\F_p^{\frac{n(n+1)}{2}} \hspace{-3pt}\arrow{r}{T_e} \arrow[swap]{d}{c_X} & \F_p^{kn} \arrow{d}{r_X} \\
\F_p^{\frac{n(n+1)}{2}} \hspace{-3pt} \arrow{r}{T_x}& \F_p^{kn}
\end{tikzcd}
\end{center}
\end{minipage} \vspace{0.9pt}}
\par Suppose $(y_i,y_j)\cdot(e_j,-e_i)\equiv 0$ mod $p$ for all $1 \leq i,j \leq k$. In other words, $(y_i,e_j) \equiv (y_j,e_i)$, that is, the $i$'th entry of $y_j$ is equal to the $j$'th entry of $y_i$. This implies that $y_1,\ldots,y_k$ are the first $k$ rows of some symmetric $n \times n$ matrix $B$, and thus $T_e(B) = (y_1,\ldots,y_k)$. This completes the proof.
\end{proof}

\subsection{Polar, orthogonal, and quotient lattices} \label{subsect:polaretc}

Let $L \subseteq \R^n$ be a lattice, not necessarily of full rank. We equip $L$ with the restriction of the standard inner product on $\R^n$; for all lattices appearing in this paper, we assume this to be the case unless stated otherwise. We say $\Lambda \subseteq L$ is a \emph{primitive} sublattice if $\spn_\R(\Lambda) \cap L = \Lambda$. Following Schmidt \cite{Sch66}, we define the \emph{polar lattice} $L^P$ of $L$ as
\begin{align*}
L^P := \big\{ w \in \mathrm{span}_\R (L) : \langle v,w \rangle \in \Z \text{ for all } v \in L \big\}.
\end{align*}
This is more commonly known as the \emph{dual lattice}. It is useful to note that, for a full-rank lattice in $\R^n$ spanned by the row vectors of an $n\times n$ matrix $g$, a basis of its dual lattice is is given by the rows of $g^{-\intercal}$.

For a primitive sublattice $\Lambda \subseteq L$, the \emph{orthogonal lattice} $\Lambda^\perp \subseteq L^P$ is defined as
\begin{align*}
\Lambda^\perp = \big\{ w \in L^P : \langle v,w \rangle = 0 \text{ for all } v \in \Lambda \big\}.
\end{align*}
$\Lambda^\perp$ is also primitive. We have $\det \Lambda^\perp = \det \Lambda / \det L$ (see e.g. \cite[Section 2]{Thu93}).

We identify the \emph{quotient lattice} (or \emph{factor lattice}, as in \cite{HK23}) $L/\Lambda$ with the orthogonal projection of $L$ onto $\spn_\R(\Lambda)^\perp$, equipped with the restriction of the standard inner product on $\R^n$. Under this identification, for primitive $\Lambda$, $\spn_\R(\Lambda^\perp)=\spn_\R(L/\Lambda)$, and $\det L/\Lambda = \det L / \det \Lambda$. Thanks to the determinant formulas, it is easy to check that $(\Lambda^\perp)^P = L/\Lambda$.

\subsection{Miscellaneous notation}
We write $x \equiv_p y$ to mean $x \equiv y$ (mod $p$). $\mathrm{rk}_p$ ($\mathrm{rk}_\Z$) of a set means the $\F_p$-rank ($\Z$-rank) of the set. $\Z^n_\pr$ denotes the set of primitive vectors of $\Z^n$. For a function $f$ defined on a countable set $L$, we sometimes write $f(L) = \sum_{x \in L}f(x)$. As above, for $z \in \R^{2n}$, we use a semicolon to express $z$ as a concatenation of two vectors in $\R^n$ (so for example, $z=(x;y)$ signifies that $x,y\in\R^n$).

\section{Contribution from $\Delta_{n,p}$, $\F_p$-rank $1$}
\label{sec:mainest_rk1}

\subsection{A reduction}

In this section, we consider the expression
\begin{align*}
\frac{1}{p^{n(n+1)/2}}\sum_{B} 
\sum_{\substack{z_1,z_2 \in \Z^{2n} \setminus \{0\}\\ \mathrm{rk}_p\{x_1,x_2\}=1}} 
f(p^{-\frac{1}{2}}(x_1;x_1B+py_1)g,p^{-\frac{1}{2}}(x_2;x_2B+py_2)g),
\end{align*}
a part of the sum \eqref{eq:toest_main}.

Notice that in the above expression, for each $x_1$ there exists at most one $y_1$ such that $f$ does not vanish for $p \gg 0$, since $\mathrm{supp}\ f\circ g$ is has bounded diameter. 
By this fact and Proposition \ref{prop:eqdist} with $k=1$, the above sum can be written as
\begin{align} \label{eq:rk_1}
\frac{1}{p^n} \sum_{\substack{z \in \Z^{2n}\setminus \{0\} \\ x \neq 0}} \sum_{k=1}^{p-1} \sum_{y_2\in\Z^n} f(p^{-\frac{1}{2}}zg,p^{-\frac{1}{2}}(kz+py_2)g).
\end{align}

Consider for a moment the sum
\begin{align*}
\frac{1}{p^n} \sum_{\substack{z \in \Z^{2n}\setminus \{0\} \\ x = 0}} \sum_{k=1}^{p-1} \sum_{y_2\in\Z^n} f(p^{-\frac{1}{2}}zg,p^{-\frac{1}{2}}(kz+py_2)g).
\end{align*}
By a rough lattice-point counting estimate, the sum over $z=(0;y)$ contributes $O(p^{n/2})$. Moreover, the sum over $k$ contributes at most $O(p^{1/2})$, since $z$ is nonzero and every coordinate of $kz$ (mod $p$) must be at most $O(p^{1/2})$ for $f$ to not vanish. Therefore the whole expression is $O(p^{(-n+1)/2})$, which vanishes as $p \rightarrow \infty$ for any $n \geq 2$. The point is that instead of \eqref{eq:rk_1}, we can consider
\begin{align*}
\frac{1}{p^n} \sum_{z \in \Z^{2n}\setminus \{0\}} \sum_{k=1}^{p-1} \sum_{y_2\in\Z^n} f(p^{-\frac{1}{2}}zg,p^{-\frac{1}{2}}(kz+py_2)g).
\end{align*}
instead, or equivalently
\begin{align}\label{eq:rk1}
\frac{1}{p^n}\sum_{\substack{z_1,z_2\in\Z^{2n}\setminus \{0\} \\ \mathrm{rk}_p\{z_1,z_2\}=1}} f(p^{-\frac{1}{2}}z_1g,p^{-\frac{1}{2}}z_2g).
\end{align}

We divide this sum into two parts: (i) when $z_1$ and $z_2$ are multiples over $\Q$ (ii) when they are independent over $\Q$.

\subsection{When $\boldsymbol{z_1,z_2}$ are dependent over $\Q$}
\label{sec:rk1_dep}
Such a pair $(z_1,z_2)$ is identical to $(jz,kz)$ for primitive $z\in\Z^{2n}$, $j \geq 1, k \in\Z\setminus \{0\}$. Their contribution is therefore
\begin{align*}
\frac{1}{p^n}\sum_{\substack{j \geq 1 \\ k \neq 0}} \sum_{z \in \Z^{2n}_\pr} f(p^{-\frac{1}{2}}jzg,p^{-\frac{1}{2}}kzg).
\end{align*}
Provided this converges (absolutely) as $p \rightarrow \infty$, it is easy to see that it is a Riemann sum that converges to
\begin{align*}
\frac{1}{\zeta(2n)}\sum_{\substack{j \geq 1 \\ k \neq 0}} \int_{\R^{2n}} f(jz,kz)dz.
\end{align*}

To see that it converges, first consider the partial sum at $j=1$
\begin{align*}
\sum_{k \geq 1}\sum_{z \in \Z^{2n}_\pr} f(p^{-\frac{1}{2}}zg,p^{-\frac{1}{2}}kzg).
\end{align*}
This is bounded by a constant (depending only on $f$ and $g$) times
\begin{align*}
\sum_{z \in \Z^{2n}_\pr} 
\mathbf{1}_{B(p^{1/2})}(z)\frac{p^\frac{1}{2}}{\|z\|},
\end{align*}
where $B(p^{1/2}) \subseteq \R^{2n}$ is the ball at origin of radius $p^{1/2}$. By summation by parts (e.g., \cite[Lemma 4.1]{Kim24}), this can be seen to be $O(p^n)$. Similarly, the partial sum at other $j$ contributes $O(j^{-2n}p^n)$.

\subsection{When $\boldsymbol{z_1,z_2}$ are independent over $\Q$}
\label{sec:rk1_indep}
Let us recall the properties of the pair $(z_1,z_2)$ in this case: both $z_1,z_2$ has length $O(p^{1/2})$, and they are collinear mod $p$ but independent over $\Q$. The former implies that $\|z_1\wedge z_2\| = O(p)$, and the latter implies that $z_1\wedge z_2 = p(u_1 \wedge u_2)$ for linearly independent $u_1,u_2 \in \Z^{2n}$. Therefore, $\|u_1 \wedge u_2\| = O(1)$, and consequently, there exist $O(1)$ number of $u_1 \wedge u_2$ containing a pair $(z_1,z_2)$ with the properties of interest. We will show that, for any such plane $\mathcal P = u_1 \wedge u_2$ of covolume $O(1)$,
\begin{align*}
\frac{1}{p^n}\sum_{\substack{z_1,z_2 \in \mathcal P \\ \mathrm{rk}_p=1,\mathrm{rk}_\Z=2}}f(p^{-\frac{1}{2}}z_1g,p^{-\frac{1}{2}}z_2g)
\end{align*}
vanishes as $p\rightarrow\infty$. In other words, these $(z_1,z_2)$ do not contribute to the Rogers formula.

There exist $O(p)$ primitive vectors $z_1 \in \mathcal P$ of length $O(p^{1/2})$. To count the number of $z_2$ that potentially contribute for each primitive $z_1$, write $\mathcal P_\Z = \mathcal P\cap \Z^{2n}$, and consider the quotient map $\pi:\mathcal P_\Z\rightarrow\mathcal P_\Z/\langle z_1\rangle$ by orthogonal projection. Since $z_1\wedge z_2$ is a multiple of $p$, $\pi(z_2)$ must have length $\|z_1\|^{-1}$ times a multiple of $p$. Also, for each $\pi(z_2)$, there exist $O(p^{1/2}/\|z_1\|)$ points in $\mathcal P$ that project to $\pi(z_2)$. Thus there exist $$O\left(\frac{p^{\frac{1}{2}}}{p\|z_1\|^{-1}} \cdot \frac{p^{\frac{1}{2}}}{\|z_1\|}\right) = O(1)$$
candidates for $z_2$ contributing nontrivially to the sum. To recapitulate, there exist $O(p)$ contributing pairs $(z_1,z_2)$ such that $z_1$ is primitive. Similarly, for each $j \geq 1$, there exist $O(j^{-2n}p)$ contributing pairs $(z_1,z_2)$ such that $z_1$ is $j$ times a primitive vector. Therefore the above sum vanishes in the $p$-limit, as desired.

\subsection{Conclusion}

To recap, we obtained that the $p \rightarrow \infty$ limit of \eqref{eq:rk_1} is
\begin{align*}
\frac{1}{\zeta(2n)}\sum_{\substack{j \geq 1\\ k \neq 0}} \int_{\R^{2n}} f(jz,kz)dz.
\end{align*}
Note that this term has an identical counterpart in Rogers' integration formula in the case $k = 2$ \cite{Rog55}.

\section{Contribution from $\Delta_{n,p}$, $\F_p$-rank 2}

\subsection{A reduction}

Here we consider the following part of the sum \eqref{eq:toest_main}:
\begin{align*}
\frac{1}{p^{n(n+1)/2}}\sum_{B} \sum_{\substack{z_1,z_2 \in \Z^{2n}\setminus \{0\}\\ \mathrm{rk}_p\{x_1,x_2\}=2}} f(p^{-\frac{1}{2}}(x_1;x_1B+py_1)g,p^{-\frac{1}{2}}(x_2;x_2B+py_2)g).
\end{align*}
Applying Proposition \ref{prop:eqdist} with $k=2$, this becomes
\begin{align*}
\frac{1}{p^{2n-1}}\sum_{\substack{x_1,x_2 \in \Z^{n}\\ \mathrm{rk}_p=2}}
\sum_{\substack{y_1,y_2\in\Z^n \\ (x_1;y_1)J(x_2;y_2)^\intercal  \equiv_p 0}} f(p^{-\frac{1}{2}}z_1g,p^{-\frac{1}{2}}z_2g).
\end{align*}
We will prove that 
\begin{align} \label{eq:rk2_temp1}
\frac{1}{p^{2n-1}}\sum_{\substack{x_1,x_2 \in \Z^{n}\\ \mathrm{rk}_p\leq 1}}\sum_{\substack{y_1,y_2\in\Z^n \\ (x_1;y_1)J(x_2;y_2)^\intercal  \equiv_p 0}} f(p^{-\frac{1}{2}}z_1g,p^{-\frac{1}{2}}z_2g)
\end{align}
vanishes as $p \rightarrow \infty$, so that we could instead work with
\begin{align} \label{eq:rk2}
\frac{1}{p^{2n-1}}\sum_{\substack{z_1\neq 0,z_2\in\Z^{2n} \\ z_1Jz_2^\intercal  \equiv_p 0}} f(p^{-\frac{1}{2}}z_1g,p^{-\frac{1}{2}}z_2g).
\end{align}

In case $\mathrm{rk}_p\{x_1,x_2\} = 0$, it is easy to see that that part of \eqref{eq:rk2_temp1} amounts to $O(p^n/p^{2n-1})=O(p^{-n+1})$. So assume $\mathrm{rk}_p\{x_1,x_2\} = 1$. 
By arguing as in Sections \ref{sec:rk1_dep} and \ref{sec:rk1_indep}, one can show that there exist $O(p^{n/2})$ $\F_p$-rank $1$ pairs $(x_1,x_2)$ contributing nontrivially to \eqref{eq:rk2_temp1}. Fixing such $(x_1,x_2)$, the number of contributing $(y_1,y_2)$ is $O(p^{n-1/2})$, since the entries of $y_1,y_2$ are each of size $O(p^{1/2})$ and they are subject to a nontrivial linear constraint. Hence \eqref{eq:rk2_temp1} has size $O(p^{(3n-1)/2}/p^{2n-1}) = O(p^{-(n-1)/2})$, which for $n\geq 2$ vanishes as $p\rightarrow\infty$, as desired.

In what follows, we will evaluate \eqref{eq:rk2} in two different ways, leading to two different formulas. It turns out that they indeed agree with each other.

\subsection{By counting formulas}

It suffices to assume $f(z_1,z_2)=\mathbf 1_{B_1}(z_1)\mathbf 1_{B_2}(z_2)$ for fixed balls $B_1,B_2\subseteq \R^{2n}$, since any integrable $f$ can be approximated in the $L^1$-norm arbitrarily well by finitely many functions of this form.
Start by rewriting \eqref{eq:rk2} as
\begin{align}\label{eq:rk2_2}
\frac{1}{p^{2n-1}}\sum_{j\geq1}\sum_{z_1\in\Z^{2n}_\pr}\mathbf 1_{B_1}(p^{-\frac{1}{2}}jz_1g)\sum_{k \in \Z}\sum_{\substack{z_2 \in \Z^{2n} \\ z_1Jz_2^\intercal =kp}} \mathbf 1_{B_2}(p^{-\frac{1}{2}}z_2g).
\end{align}

Write $Z = \spn_\Z\langle z_1gJ\rangle$, a rank $1$ sublattice of $\Z^{2n}gJ$. The symplecticity of $g$ implies that the polar lattice of $\Z^{2n}gJ$ is $\Z^{2n}g$: since $g^\intercal Jg = J$, $(gJ)^{-\intercal}=g^{-\intercal}J=Jg$, and clearly $\Z^{2n}Jg = \Z^{2n}g$.
Thus $Z^\perp$ is a rank $2n-1$ sublattice of $\Z^{2n}g$. Now define 
\begin{align*}
L(k,p^{-\frac{1}{2}}z_1g) := \spn_\R (Z^\perp) + \frac{kp^{-\frac{1}{2}}z_1gJ}{\|p^{-\frac{1}{2}}z_1g\|^2}.
\end{align*}
The innermost sum of \eqref{eq:rk2_2} counts the number of points of $p^{-1/2}\Z^{2n}g$ contained in
\begin{align*}
B'_2:= L(k,p^{-\frac{1}{2}}z_1g)\cap B_2,
\end{align*}
or equivalently, of the points of $\Z^{2n}g$ contained in 
\begin{align*}
p^\frac{1}{2}B'_2 = \left(\spn_\R (Z^\perp) + \frac{kpz_1gJ}{\|z_1g\|^2}\right)\cap p^\frac{1}{2}B_2.
\end{align*}

It is easy to check that $\Z^{2n}g \cap p^{1/2}L(k,p^{-1/2}z_1g)$ has full rank. 
Well-known lattice-point counting formulas --- e.g., \cite[Lemma 3.6]{Kim24}, or \cite[Theorem 5.4]{Wid10} --- imply that the quantity is
\begin{align}\label{eq:counting_formulas_inner_sum}
&\frac{\mathrm{vol}(p^\frac{1}{2}B_2')}{\det Z^\perp} + O_n\left(\sum_{i=0}^{2n-2}\frac{p^\frac{i}{2}}{\lambda_1(Z^\perp)\cdots\lambda_i(Z^\perp)}\right) \nonumber \\
&= \frac{p^{n-1}}{\|p^{-\frac{1}{2}}z_1g\|}\mathrm{vol}(B_2') +  O_n\left(\sum_{i=0}^{2n-2}\frac{p^\frac{i}{2}}{\lambda_1(Z^\perp)\cdots\lambda_i(Z^\perp)}\right),
\end{align}
where $\lambda_i$ is the $i$'th successive minimum. 

Later in our computation, we will need the intended error terms above to be $o_g(p^{n-1})$. Since $\lambda_i(Z^\perp) \geq \lambda_i(\Z^{2n}g)$, this is easily seen to hold for $i < 2n-2$. For $i=2n-2$, however, this is not enough. Worse, if $\lambda_1(Z^\perp)\cdots\lambda_{2n-2}(Z^\perp)$ is small, which is very possible, the intended error will be of size comparable to $p^{n-1}$.

Let us first give a short informal discussion of how we plan to get around this difficulty. The idea is that, since $\det Z^\perp = \|z_1gJ\|$, which is proportional to $p^{1/2}$ most of the time, we expect $\lambda_j(Z^\perp) \approx p^{1/(2(2n-1))}$ most of the time for any $j=1,\ldots,2n-1$. To see this, first recall the fact (e.g.,\cite{HK23}) that the shape of $Z^\perp$ becomes asymptotically equidistributed in $\xlat_{2n-1}$ for uniform choices of $Z$ of covolume $\leq j^{-1}p^{1/2}$ as $p\rightarrow\infty$. For such equidistributed lattices, recall another well-known fact that, for most choices of $Z$, $\lambda_1$ is a constant times the $1/(2n-1)$'th power of its determinant (e.g.,\cite[Theorem 6]{LN25}). Hence we expect $\lambda_1(Z^\perp) \approx p^{1/(2(2n-1))}$. Since $\lambda_1\leq\lambda_2\leq\lambda_3\leq\ldots$ and by Minkowski's second theorem, other $\lambda_j$ must be of the same growth order in $p$. 

What we actually need is in fact a lot weaker: for any $\delta > 0$, it is sufficient that $\lambda_1(Z^\perp)\cdots\lambda_{2n-2}(Z^\perp) \gg p^\delta$ most of the time, or equivalently by Minkowski's second theorem, $\lambda_{2n-1}(Z^\perp) \ll p^{1/2-\delta}$.

We now give a rigorous argument for this step. Rewrite the problematic error term as
\begin{align*}
\frac{p^{n-1}}{\lambda_1(Z^\perp)\cdots\lambda_{2n-2}(Z^\perp)} &= O\left(p^{n-1}\frac{\lambda_{2n-1}(Z^\perp)}{\det Z^\perp}\right) \\
&= O\left(p^{n-1}\frac{\det Z^\pi}{\lambda_1(Z^\pi)}\right),
\end{align*}
where we wrote $Z^\pi = \Z^{2n}gJ/Z$; first line follows by Minkowski's second theorem, second by the transference theorem (see e.g., \cite{Ban93}; it may also be helpful to recall that $Z^\pi$ can be identified with $(Z^\perp)^P$, as explained in Section \ref{subsect:polaretc}). 
\begin{lemma} \label{lemma:control}
Let $A>0$, and let $p$ be sufficiently large. There exist constants $0<\delta,\eta<1$ such that the number of $z_1gJ \in \Z^{2n}gJ$ of length at most $Ap^{1/2}$ such that $\lambda_1(Z^\pi) < (\det Z^\pi)^{1-\delta}$ is at most $O(p^{n-0.49\eta})$.
\end{lemma}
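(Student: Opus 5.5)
Let $\Lambda := \Z^{2n}gJ$, a fixed unimodular lattice in $\R^{2n}$. For primitive $v = z_1gJ \in \Lambda$ we set $Z = \spn_\Z\langle v\rangle$. Then $Z^\pi = \Lambda/Z$ is the orthogonal projection of $\Lambda$ onto $v^\perp$, and $\det Z^\pi = 1/\|v\|$. For a non-primitive $v$ the lattice $Z$ is the same as for its primitive part, so it suffices to count primitive $v$ and sum over multiples. The condition $\lambda_1(Z^\pi) < (\det Z^\pi)^{1-\delta}$ means there is $w \in \Lambda\setminus \R v$ whose projection $\pi(w)$ onto $v^\perp$ satisfies $\|\pi(w)\| < \|v\|^{-(1-\delta)}$. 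Equivalently,
\begin{align*}
\|v\wedge w\| = \|v\|\,\|\pi(w)\| < \|v\|^{\delta} \le (Ap^{1/2})^{\delta}.
\end{align*}
So every bad $v$ lies in a rank-$2$ primitive sublattice $\mathcal P = \spn_\R(v,w)\cap\Lambda$ of covolume $\det\mathcal P \le \|v\wedge w\| \ll p^{\delta/2}$.

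The plan is to count pairs $(\mathcal P, v)$ with $\mathcal P$ a primitive rank-$2$ sublattice of $\Lambda$ with $\det\mathcal P\le T := C p^{\delta/2}$, and $v\in\mathcal P$ with $\|v\|\le Ap^{1/2}$.

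\emph{Counting planes.} The number of primitive rank-$2$ sublattices of $\Lambda$ with covolume at most $T$ is $O_g(T^{2n})$, by Schmidt's asymptotic count of sublattices \cite{Sch66}; a crude bound suffices, valid uniformly with constants depending on $g$.

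\emph{Counting vectors in a plane.} For a fixed $\mathcal P$ we bound the number of $v$ by splitting on the size of $\lambda_1(\mathcal P)$. Since $\lambda_1(\mathcal P)\ge\lambda_1(\Lambda)\gg_g 1$ and $\lambda_1(\mathcal P)\lambda_2(\mathcal P)\asymp\det\mathcal P$, the number of lattice points of $\mathcal P$ in a disc of radius $R=Ap^{1/2}$ is
\begin{align*}
O\!\left(\frac{R^2}{\det\mathcal P}+\frac{R}{\lambda_1(\mathcal P)}+1\right) = O(p).
\end{align*}
Combining, the number of bad $v$ is $O(T^{2n}p) = O(p^{1+n\delta})$. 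This is $O(p^{n-0.49\eta})$ as soon as $1+n\delta\le n-0.49\eta$, for example with $\eta = 1/2$ and $\delta$ small (recall $n\ge2$). The result is even much better than needed, which is presumably why the lemma is stated with unspecified constants.

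\emph{Main obstacle.} The delicate point is the uniformity of the plane count in $T$, and the bookkeeping that each bad $v$ is indeed captured by some plane of covolume $\le T$. For the latter, note that $\det(\spn(v,w)\cap\Lambda)$ divides $\|v\wedge w\|$ in the index sense, so the covolume bound is automatic. For the former, Schmidt's bound for a general lattice $\Lambda=\Z^{2n}gJ$ follows by comparing with $\Z^{2n}$ via the bounded distortion of $gJ$, which changes covolumes of planes by at most a factor depending on $g$. One also has to handle non-primitive $z_1$, i.e.\ $z_1 = j z$: the associated $Z$ is unchanged while $\|z_1 gJ\|\le Ap^{1/2}$ forces $\|zgJ\|\le Ap^{1/2}/j$, so summing over $j$ multiplies the count by at most $\sum_j 1 \ll p^{1/2}$. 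A cruder route is to absorb this into the exponent, giving $O(p^{3/2+n\delta})$, which is still $O(p^{n-0.49\eta})$ for $n\ge2$ with $\eta,\delta$ small.
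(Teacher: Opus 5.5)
Your argument is correct, and it takes a genuinely different route from the paper's.

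\textbf{The paper's route.} It passes to the refined Iwasawa decomposition of Horesh--Karasik for a basis of $\Z^{2n}gJ$ whose first row is $z_1gJ$. There $e^t=\|z_1gJ\|$ and $\lambda_1(Z^\pi)=\Theta(e^{-t/(2n-1)-w_1/2})$. The bad condition therefore becomes a cusp condition on the single coordinate $w_1$ (roughly $w_1\gtrsim \eta t$), and this ties $\eta$ to $\delta$. The bound $O(p^{n-0.49\eta})$ is then obtained by rerunning the proof of \cite[Prop.~8.3]{HK23} with $\SL(2n,\Z)$ replaced by $\SL(2n,\Z)g$.

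\textbf{Your route.} You avoid that machinery entirely. A short vector of $Z^\pi$ gives $w$ with $\|v\wedge w\|<\|v\|^{\delta}$, hence a primitive rank-$2$ sublattice of covolume $\ll p^{\delta/2}$ through $v$. You then count such planes (by Schmidt \cite{Sch66}, or even the elementary $O(T^{2n}\log T)$ bound via reduced bases), together with the $O(p)$ points of norm $\le Ap^{1/2}$ in each plane. This gives $O(p^{1+n\delta})$.

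\textbf{What your route buys.} It is self-contained and explicit. It does not rely on checking that the Horesh--Karasik fundamental-domain counting survives translation by $g$, which the paper only asserts. It also gives a much stronger saving than $p^{n-0.49\eta}$. It is in the same spirit as the paper's own plane-counting argument in Section~\ref{sec:rk1_indep}.

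Decoupling $\delta$ from $\eta$ is harmless. Downstream the lemma is used only to split primitive $z_1$ into normal and not-normal ones. That needs only two things: some $\delta>0$, so that the error term $p^{n-1}\det Z^\pi/\lambda_1(Z^\pi)\le p^{n-1}\|z_1g\|^{-\delta}$ is $o(p^{n-1})$ for normal $z_1$; and a bad count of size $o(p^{n})$.

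\textbf{What the paper's route buys.} It fits the equidistribution picture of the shapes of $Z^\perp$ sketched before the lemma. That picture could in principle give distributional information, but only an upper bound is needed here.

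\textbf{One small imprecision.} For non-primitive $v=jz$, it is the projected lattice $Z^\pi$ that agrees with that of $z$, not $Z=\Z v$ itself. If one literally uses $\det Z^\pi=\det\Lambda/\det(\Z v)$, the condition only becomes stricter, so your bound still holds. In any case, the lemma is only applied to $z_1\in\Z^{2n}_\pr$ in \eqref{eq:rk2_2}. So the non-primitive case, and your crude extra factor $p^{1/2}$, are not actually needed.
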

\begin{proof}
Following \cite{HK23}, for an element $\tilde g \in \SL(2n,\R)$, we consider the \emph{refined Iwasawa} decomposition\footnote{Here and in the rest of the proof, we take the transpose of how the matrices are written in \cite{HK23}, since we are modding out by $\SL(2n,\Z)$ from the left.} $\tilde g = na'_ta''_wk$, where $n$ is a lower-triangular unipotent matrix, $k \in \SO(2n,\R)$, and
\begin{align*}
a' = \begin{pmatrix} e^t &  \\ & \diag(\underbracket{e^{-\frac{t}{2n-1}},\ldots,e^{-\frac{t}{2n-1}}}_{n-1\ \mathrm{times}}) \end{pmatrix},
a'' = \begin{pmatrix}
1 & \\
& \diag( e^{-\frac{w_1}{2}},e^{\frac{w_1-w_2}{2}}, \ldots, e^{\frac{w_{2n-2}}{2}}) \\
\end{pmatrix}.
\end{align*}

Suppose that $\tilde g$ is an element of the fundamental domain (as chosen in \cite[(6.1)]{HK23}) of 
\begin{align*}
\Gamma' = \begin{pmatrix} 1 &  \\ \Z^{2n-1}& \SL(2n-1,\Z) \end{pmatrix} \subseteq \SL(2n,\R),
\end{align*}
and that the row vectors of $\tilde g$ span $\Z^{2n}gJ$ and its first row equals $z_1gJ$.
Then by \cite[Prop. 4.3]{HK23}, we have $e^t = \|z_1g\|=\|z_1gJ\|$, and in addition by the reduction theory of $\SL(*,\R)$ (\emph{cf.} \cite[Lemma 2.5]{Bor19}) we also have
\begin{align*}
\lambda_1(Z^\pi) = \Theta(e^{-\frac{t}{2n-1}-\frac{w_1}{2}}),\  \lambda_1(Z^\pi)\lambda_2(Z^\pi)=\Theta(e^{-\frac{t}{2n-1}-\frac{w_2}{2}}), \ldots,
\end{align*}
and so on (recall $f=\Theta(g)$ means $f=O(g)$ and $g=O(f)$). Thus $\lambda_1(Z^\pi) < (\det Z^\pi)^{1-\delta}$ if and only if $w_1 = \eta t + O(1)$, where $\eta$ satisfies
\begin{align*}
\frac{1}{2n-1}-\frac{\eta}{2}=1-\delta.
\end{align*}
It is easy to come up with a pair $0 < \delta,\eta < 1$ satisfying this condition.

By copying the proof of \cite[Prop. 8.3]{HK23} almost word by word (also \emph{cf.} Ibid., Prop. 6.1), except replacing $\SL(2n,\Z)$ by $\SL(2n,\Z)g$, one obtains that the number of $z_1gJ \in \Z^{2n}gJ$ of length at most $Ap^{1/2}$ such that $\lambda_1(Z^\pi) < (\det Z^\pi)^{1-\delta}$ is at most $O(p^{n-0.49\eta})$, as desired.
\end{proof}

With the above lemma, we return to the consideration of \eqref{eq:rk2_2}. Let us temporarily say $z_1 \in \Z^{2n}_\pr$ is \emph{normal} if $\|z_1g\| > p^{0.49}$ and $\lambda_1(Z^\pi) \geq (\det Z^\pi)^{1-\delta}$. By Lemma \ref{lemma:control} and \eqref{eq:counting_formulas_inner_sum}, the contribution to \eqref{eq:rk2_2} of not-normal $z_1$'s is $O(p^{-0.49\eta} + p^{-0.02n})$, which vanishes as $p\rightarrow\infty$. 
The contribution of normal $z_1$'s is, up to terms vanishing in the $p$ limit,
\begin{align*}
\frac{1}{p^n}\sum_{j\geq1}\sum_{\substack{z_1\in\Z^{2n}_\pr\\\mathrm{normal}}} \mathbf 1_{B_1}(p^{-\frac{1}{2}}jz_1g)\frac{1}{\|p^{-\frac{1}{2}}z_1g\|}\sum_{k\in\Z}\mathrm{vol}(L(k,p^{-\frac{1}{2}}z_1g)\cap B_2).
\end{align*}

For each $z_1$, there are $O(\|p^{-1/2}z_1g\|) = O_g (1)$ values of $k \in \Z$ that contribute nontrivially, and each term is bounded. Thus the above sum converges, and it is asymptotically equivalent to
\begin{align*}
\frac{1}{p^n}\sum_{j\geq1}\sum_{z_1\in\Z^{2n}_\pr} \mathbf 1_{B_1}(p^{-\frac{1}{2}}jz_1g)\frac{1}{\|p^{-\frac{1}{2}}z_1g\|}\sum_{k\in\Z}\mathrm{vol}(L(k,p^{-\frac{1}{2}}z_1g)\cap B_2).
\end{align*}
As a Riemann sum, this converges as $p \rightarrow \infty$ to
\begin{align*}
\frac{1}{\zeta(2n)}\sum_{\substack{j\geq 1 \\ k \in \Z}}\int_{\R^{2n}}\mathbf 1_{B_1}(jz_1)\mathrm{vol}(L(k,z_1)\cap B_2)\frac{dz_1}{\|z_1\|},
\end{align*}
or more suggestively,
\begin{align}\label{eq:mainterm_1}
\frac{1}{\zeta(2n)}
\sum_{\substack{j\geq 1 \\ k \in \Z}}
\int_{\R^{2n}}\int_{L(k,z_1)}f(jz_1,z_2)dz_2\frac{dz_1}{\|z_1\|}.
\end{align}

\subsection{By Fourier analysis} We will now briefly sketch how it is possible to arrive at the ``symplectic'' term in Theorem \ref{thm:main-Schwartz} directly from the rank-two expression \eqref{eq:rk2}. (The argument that follows is not logically relevant to the proof of Theorem \ref{thm:main-Schwartz}; rather, this theorem will be deduced from Theorem \ref{thm:main} in the next section.)\\[+0.5em]
For argument's sake, we assume that $f(z_1,z_2) = f_1(z_1)f_2(z_2)$ factors as a product of two compactly supported Schwartz functions $f_1,f_2$ on $\R^{2n}$. In order to handle the condition $z_1 J z_2^\intercal \equiv_p 0,$ we introduce the lattice $\Lambda(z_1) := \{z_2 \in \Z^{2n} \mid z_1Jz_2^\intercal  \equiv_p 0\}$ and rewrite the expression \eqref{eq:rk2} as
\begin{align*}
\frac{1}{p^{2n-1}}\sum_{j \geq 1}\sum_{z_1\in\Z^{2n}_\pr}f_1(p^{-1/2}Jz_1g)\sum_{z_2\in\Lambda(z_1)}f_2(p^{-1/2}z_2g).
\end{align*}
The idea is now to apply the Poisson summation formula to the innermost sum. \\[+0.5em]
First, we observe that the lattice dual to $\Lambda(z_1)$ is $\Lambda(z_1)^P = \Z^{2n} + p^{-1}\Z z_1 J$. (Namely, the right-hand side is indeed contained in the left-hand side, and has the correct determinant $p^{-1} = (\det \Lambda(z_1))^{-1}$.) Next, we use the well-known properties of the Fourier transform to compute that
\begin{align*}
    \left[ \mathcal{F} \! \left( z_2\mapsto f_2(p^{-1/2}z_2g)\right) \right]\!(\xi_2)
    =
    p^n\widehat{f}_2(p^{1/2}\xi_2g^{-\intercal}).
\end{align*}
Letting $q = -pmJ + k z_1$, we can now write the sum over $z_2$ as
\begin{align*}
&p^{n-1}\sum_{m \in \Z^{2n}}\sum_{k \in (-\frac{p}{2},\frac{p}{2})} 
\widehat f_2\bigl( p^\frac{1}{2}(m+p^{-1}kz_1J)g^{-\intercal}\bigr)
=
p^{n-1}\sum_{m \in \Z^{2n}}\sum_{k \in (-\frac{p}{2},\frac{p}{2})} 
\widehat f_2\bigl( p^{-\frac{1}{2}}q J g^{-\intercal} \bigr).
\end{align*}
\par Since $q \equiv_p k z_1,$ we see that the $\F_p$-rank of $\lbrace q, z_1 \rbrace$ is $1.$ Because $\smash{\widehat{f}_2}$ has rapid decay, we can restrict attention to those $q$ with $\Vert q \Vert \ll \smash{\sqrt{p}}$; and we can then argue (just as in Section \ref{sec:rk1_indep}) that all but a negligible proportion of the pairs $(q, z_1)$ will be linearly dependent over $\Q.$ As $z_1$ is primitive, this means that $q = k z_1$ for some $k \in \Z$. Using that $Jg^{-\intercal}=gJ$, we find that \eqref{eq:rk2} is asymptotic to
\begin{align*}
\frac{1}{p^n}
\sum_{j \geq 1}
\sum_{k \in \Z}
\sum_{z_1 \in \Z^{2n}_\pr}
f_1(p^{-\frac{1}{2}}j z_1 g)\widehat f_2(p^{-\frac{1}{2}} k z_1 g J).
\end{align*}
This is now a "primitive" Riemann sum with step length $p^{-1/2}$ in each of the $2n$ coordinates of $z_1$. As in Section \ref{sec:rk1_dep}, because the primitive lattice points have density $1/\zeta(2n)$ in $\Z^{2n}$, we see that this sum converges to
\begin{align} \label{eq:mainterm_2}
\frac{1}{\zeta(2n)}\sum_{\substack{j\geq 1 \\k \in \Z}}\int_{\R^{2n}} f_1(jz)\widehat f_2(kzJ) \, dz
\end{align}
as $p \rightarrow \infty.$

\subsection{Proof of Theorem \ref{thm:main-Schwartz} based on Theorem \ref{thm:main}}\label{subsect:relation_between_formulas} $\ $\\ 
We now reconcile the two seemingly different main terms \eqref{eq:mainterm_1} and \eqref{eq:mainterm_2}, and so deduce Theorem \ref{thm:main-Schwartz} from Theorem \ref{thm:main}, using the theory of the Radon transform. (For more details on this topic, we refer the reader to the book \cite{ehrenpreis}.) 

For a unit vector $v \in \R^{2n}$ and $t \in \R,$ consider the hyperplane $\Pi (v, t) \subseteq \R^{2n}$ perpendicular to $v$ at distance $|t|$ from the origin. The \textit{Radon transform} of an (absolutely) integrable function such as $f_2$ is the function of $(v,t)$ given by the integral
\begin{align*}
    (\radon  f_2)(v,t) 
    = \int_{\Pi (v,t)} f_2(x) \, d_\Pi (x) 
    = \int_{\Pi (v,t)} f_2(x) \, dx
\end{align*}
where $d_\Pi(x) = dx$ is Lebesgue measure on the hyperplane $\Pi(v, t).$ 

As in the introduction, we consider the specific hyperplane
\begin{align*}
    L(k, z_1) 
    = 
    \Pi(z_1 J, k) 
    =
    \Pi \! \left( 
        \frac{z_1 J}{\Vert z_1 \Vert}, 
        \frac{k}{\Vert z_1 \Vert} 
        \right)
\end{align*}
and note that the right-hand side of Theorem \ref{thm:main} involves the integral
\begin{align}\label{radon_reconciliation}
\frac{1}{\zeta(2n)} 
\sum_{j \geq 1} 
\sum_{k \in \mathbb{Z}}
\int_{\mathbb{R}^{2n}} 
(\radon f_2)\left( 
\frac{z_1 J}{\Vert z_1 \Vert}, 
\frac{k}{\Vert z_1 \Vert} \right)
f_1(j z_1) \, \Vert z_1 \Vert^{-1} \, d z_1.
\end{align}
It will be convenient to write $u := z_1 J / \Vert z_1 \Vert$ and $p := k / \Vert z_1 \Vert$, and 
\begin{align*}
\varphi_u(k) := (\radon f_2)(u, k/\Vert z_1 \Vert) = (\radon f_2)(u, p)
\end{align*}
for the Radon transform of $f_2$. \\ \par 

We will first assume that $f_1$ and $f_2$ are Schwartz functions and show the equivalence of Thm. \ref{thm:main} and Thm. \ref{thm:main-Schwartz} under this assumption. This will prove the first part of Thm. \ref{thm:main-Schwartz} that concerns functions in the Schwartz class. 

By interchanging the sum over $k$ and the integral, we see that \eqref{radon_reconciliation} contains the sum
\begin{align} \label{radon_and_poisson}
\sum_{k \in \mathbb{Z}} (\mathcal{R} f_2)\left( \frac{z_1 J}{\Vert z_1 \Vert}, 
\frac{k}{\Vert z_1 \Vert}\right) 
=
\sum_{k \in \mathbb{Z}} \varphi_u (k/\Vert z_1 \Vert)
= 
\Vert z_1 \Vert \sum_{\xi \in \mathbb{Z}} \widehat{\varphi}_u (\Vert z_1 \Vert \xi),
\end{align}
where the final equality is due to the Poisson summation formula. 

Computing the Fourier transform with the Fourier slice theorem, we find that
\begin{align}\label{big-ol-slice}
\Vert z_1 \Vert \, \widehat{\varphi}_u (\Vert z_1 \Vert \xi)
= 
\Vert z_1 \Vert \, \smash{\widehat{f}}_2 (\Vert z_1 \Vert \xi u) 
= 
\Vert z_1 \Vert
\smash{\widehat{f}}_2 \biggl(
\frac{\Vert z_1 \Vert \xi z_1 J}{\Vert z_1 \Vert} \biggr)
=
\Vert z_1 \Vert
\smash{\widehat{f}}_2 \bigl(
\xi z_1 J
\bigr).
\end{align}
Inserting this into \eqref{radon_and_poisson} and inserting the result into \eqref{radon_reconciliation}, we conclude that the latter expression is
\begin{align*}
\frac{1}{\zeta(2n)} 
\sum_{j \geq 1} 
\sum_{\xi \in \mathbb{Z}}  
\int_{\mathbb{R}^{2n}}
\smash{\widehat{f}}_2 \bigl( \xi x J \bigr)
f_1(j x) \, d x .
\end{align*}
This shows that \eqref{eq:mainterm_1} and \eqref{eq:mainterm_2} are equal. \\ \par 

We now extend the argument to cover the other cases announced in Theorem \ref{thm:main-Schwartz}. thus completing the proof of this theorem. Since this task only involves $f_2$, we will drop the subscript and write $f := f_2$ for simplicity.

The Radon transform in \eqref{radon_reconciliation} appears in the integrand. Therefore we have to ensure that $\mathcal{R}f_2 = \mathcal{R}f$ is well-defined for almost every hyperplane $\Pi$. To do this, we treat the two cases separately: First, if $f$ is Riemann integrable, let $D(f)$ be the set of all points of discontinuity of $f$. As this is a nullset, so is the set of all $z_1 \in \R^{2n}$ and $p \geq 0$ such that $\Pi (z_1 J / \Vert z_1 \Vert, p) \cap D(f)$ has positive measure. Therefore $f|_\Pi$ is integrable for almost every hyperplane $\Pi.$ Next, if $f$ is a bounded, Borel measurable function with compact support, then it is immediate that $f|_\Pi$ has the same properties for any hyperplane $\Pi$; hence $\mathcal{R}f$ is a well-defined function in this case as well.\par

Let $\varepsilon > 0$ and replace $f$ with the mollified function $f \ast \Phi_\varepsilon$, where $\Phi_\varepsilon (x) = \varepsilon^{-2n} \varphi(\varepsilon^{-1} x)$ and $\Phi$ denotes the smooth bump function
\begin{align*}
    \Phi (x) = C \cdot {\bf 1} \! \bigl[ \Vert x \Vert < 1 \bigr] \exp \left( - \frac{1}{1 - \Vert x \Vert^2}\right) \qquad (\text{for } x \in \R^{2n}) 
\end{align*}
where the constant $C > 0$ ensures that $\Phi$ has integral $1.$ 
Since $f_\varepsilon$ is a Schwartz function, the argument above goes through with $f_\varepsilon$ in place of $f_2$, and the equivalent of \eqref{radon_and_poisson} and \eqref{big-ol-slice} becomes
\begin{align}\label{mollified-Radon-Poisson}
    \sum_{k \in \mathbb{Z}} (\mathcal{R} f_\varepsilon)\left( \frac{z_1 J}{\Vert z_1 \Vert}, 
    \frac{k}{\Vert z_1 \Vert}\right) 
    &= \nonumber
    \Vert z_1 \Vert \sum_{\xi \in \mathbb{Z}} 
    \widehat{f} \bigl( \xi z_1 J \bigr) \widehat{\Phi_\varepsilon} \bigl( \xi z_1 J \bigr)
    \\ &=
    \Vert z_1 \Vert \sum_{\xi \in \mathbb{Z}} 
    \widehat{f} \bigl( \xi z_1 J \bigr) \widehat{\Phi} \bigl( \varepsilon \xi z_1 J \bigr).
\end{align}
Our goal is now to let $\varepsilon \rightarrow 0$ on both sides of \eqref{mollified-Radon-Poisson} and argue that the desired statement emerges in the limit. \par 
We claim that the dominated convergence theorem applies to the limit on the left-hand side. Indeed, for any $\varepsilon \in (0,1)$, the Radon transform $\mathcal{R} f_\varepsilon$ is uniformly dominated by the Radon transform of a slightly larger function supported on a thickening of the compact support of $f$. Therefore the left-hand side above is dominated by a series over $k \in \Z$ which is convergent because it is comparable to the right-hand side of \eqref{radon_and_poisson}.\par 
Concerning the right-hand side, we note that
\begin{align*}
    \widehat{f}(\xi z_1 J) \smash{\widehat{\Phi}}(\varepsilon \xi z_1 J) \longrightarrow \smash{\widehat{f}}(\xi z_1 J) \smash{\widehat{\Phi}}(0)
    = \smash{\widehat{f}}(\xi z_1 J)
\end{align*}
when $\varepsilon \rightarrow 0$, because of the normalization of $\Phi$ discussed above. Therefore the dominated convergence theorem will apply, provided that we can show that
\begin{align}\label{fhat_series_almost_surely_convergent}
    \sum_{\xi \in \Z} \Big| \widehat{f} (\xi z_1 J ) \Big| < \infty 
    \qquad (\text{for almost every } z_1 \in \R^{2n}).
\end{align}
To prove this, we can average this series over an annulus $A \subseteq \R^{2n}$ centered at the origin, and argue as follows: Tonnelli's theorem and Cauchy--Schwarz give
\begin{align*}
    \int_A \sum_{\xi \in \mathbb{Z}} 
    \Big| \smash{\widehat{f}} (\xi z_1 J ) \Big| \, d z_1
    &= 
    \sum_{\xi \in \mathbb{Z}}  \int_A 
    \Big| \smash{\widehat{f}} (\xi z_1 J) \Big| \, d z_1
    \\ &\leq
    \smash{\widehat{f}} (0)\operatorname{Vol}(A) + 
    2 \sum_{\xi = 1}^\infty  \frac{1}{\xi^{2n}} \int_{\xi A}
    \Big| \smash{\widehat{f}} (z_1 J) \Big| \, d z_1
    \\ &\leq
    \smash{\widehat{f}} (0) \operatorname{Vol}(A) + 
    2 \sum_{\xi = 1}^\infty  \frac{1}{\xi^{2n}} 
    \operatorname{Vol}(\xi A)^{1/2} \, \big\| \smash{\widehat{f}} \, \big\|_{L^2}
    \\ &\ll_{A}
    \big\| f \big\|_{L^2} \sum_{\xi = 1}^\infty  \frac{1}{\xi^{n}},
\end{align*}
where we also used Plancherel's theorem. Since $A$ was arbitrary, \eqref{fhat_series_almost_surely_convergent} follows.\par 
Finally, taking the limit inside the sums in \eqref{mollified-Radon-Poisson}, we arrive at the statement
\begin{align*}
    \sum_{k \in \mathbb{Z}} (\mathcal{R} f)\left( \frac{z_1 J}{\Vert z_1 \Vert}, 
    \frac{k}{\Vert z_1 \Vert}\right) 
    =
    \Vert z_1 \Vert \sum_{\xi \in \mathbb{Z}} 
    \widehat{f} \bigl( \xi z_1 J \bigr).
\end{align*}
Inserting this into \eqref{radon_reconciliation}, we see that \ref{eq:mainterm_1} and \ref{eq:mainterm_2} are equal. \\ \par 
In addition to showing the equality of the two seemingly different terms on the ``Euclidean'' sides of the moment formula, the above computation reconciles Theorem \ref{thm:main} and Theorem \ref{thm:main-Schwartz} entirely, as the term corresponding to $\xi = 0$ is easily seen to equal $\left( \int f_1 \, dz \right) \left( \int f_2 \, dz \right)$. The proof of Thm. \ref{thm:main-Schwartz} from Thm. \ref{thm:main} is thus concluded.

\section{Contribution from $\Delta_{i,p}$ for $i < n$}
\label{sec:Delta_i}

The goal of this section is to prove the following, which would complete our proof of the main result Theorems \ref{thm:main} and \ref{thm:main-Schwartz}.
\begin{proposition}\label{prop:error}
For $i=0,\ldots,n-1$ and $F$ as in \eqref{eq:2strans},
\begin{align*}
\frac{1}{|\Delta_p|}\sum_{h \in \Delta_{i,p}} F(p^{-\frac{1}{2}}hg)
\end{align*}
vanishes as $p \rightarrow \infty$. 
\end{proposition}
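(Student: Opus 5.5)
We bound the sum by crude lattice-point counting, exploiting that $|\Delta_{i,p}|\ll_n p^{(n-i)i+i(i+1)/2}$ is smaller than $|\Delta_p|\asymp p^{n(n+1)/2}$ by a factor $p^{-(n-i)(n-i+1)/2}\le p^{-1}$ for $i<n$. It suffices to treat $f=\mathbf 1_{B_1}\otimes\mathbf 1_{B_2}$ with balls $B_1,B_2$ of radius $\le A$, since $|f|$ is dominated by such a function. Then $F(p^{-1/2}hg)\le N_h^2$, where $N_h=\#\{z\in\Z^{2n}\setminus\{0\}: \|p^{-1/2}zhg\|\le A'\}$ and $A'$ depends on $A$ and $g$. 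Thus we need
\begin{align*}
\sum_{h\in\Delta_{i,p}} N_h^2 = o(p^{n(n+1)/2}).
\end{align*}

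The natural first step is to use that $\Lambda_h:=p^{-1/2}\Z^{2n}h$ is a unimodular lattice, since $\det h=p^n$. Its points in a ball of radius $R$ number $O(R^{2n}\prod_{j}\max(1,\lambda_j(\Lambda_h)^{-1}))$. For $h$ with $D$ as in \eqref{eq:D_std} (after the harmless conjugation by $O$) and $B$ as in Lemma \ref{lemma:reps}, the lattice $\Z^{2n}h$ contains the sublattice generated by the rows of $(pD^{-\intercal}\ B)$ and $(0\ D)$. Concretely, the first $n-i$ rows of $D$ are short, of length $O(p)$ or even $O(1)$ coming from $I_{n-i}$ and $\tilde D$. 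Also, $pD^{-\intercal}$ has $n-i$ rows roughly $p\,e_k$ and $i$ rows of size $O(p)$.

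So the $2(n-i)$-dimensional part behaves like $\diag(p,1)$ in those coordinates. After scaling by $p^{-1/2}$, this sublattice has $n-i$ directions with minima $\asymp p^{-1/2}$, which contribute about $p^{(n-i)/2}$ points, and $n-i$ directions with minima $\asymp p^{1/2}$, which contribute nothing. The remaining $2i$-dimensional block, governed by $\begin{pmatrix}I_i&B_{22}\\&pI_i\end{pmatrix}$, is exactly the $\Delta_{i,p}$-type Hecke structure in $\Sp(2i)$. Averaged over $B_{22}$, it contributes as in the rank-$1$/rank-$2$ analysis of Sections \ref{sec:mainest_rk1} and 4 with $n$ replaced by $i$.

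Concretely, I would write $z=(x;y)$, split $x=(x',x'')\in\Z^{n-i}\times\Z^i$, and expand $zh$. The first $n-i$ coordinates are $px'+\dots$, and the conditions become congruences mod $p$ on $(x'',y)$-data together with bounds of size $p^{1/2}$. I would then count pairs $(z_1,z_2)$ summed over $B_{22}$ and over $\tilde D$, using Proposition \ref{prop:eqdist} in dimension $i$ for the $B_{22}$ average, exactly as in \eqref{eq:rk1} and \eqref{eq:rk2}. The target is a total bound of the form
\begin{align*}
\sum_{h\in\Delta_{i,p}}N_h^2\ll p^{(n-i)i+i(i+1)/2}\cdot O\bigl(p^{n-i}\bigr)\cdot O(1),
\end{align*}
or anything $o(p^{n(n+1)/2})$. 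Since $(n-i)(n-i+1)/2\ge n-i$, with equality only when $n-i=1$, I expect this to be tight only for $i=n-1$. That borderline case needs an extra saving of a power of $p$, which I would get from the pairs where $x'\ne0$: those force a coordinate of size $\ge p\cdot p^{-1/2}$ and are excluded.

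The main obstacle is exactly this borderline $i=n-1$ and, more generally, controlling the $\tilde D$-average. For special $\tilde D$ (e.g. $\tilde D=0$), the lattice $\Lambda_h$ may have several very short vectors. Then $N_h$ is large, of order $p^{(n-i)/2}$ from the $x''=0$ directions times the block contribution, and the squared count is dangerous. I would first try to show that the rank-deficient pairs (where $z_1,z_2$ lie in the span of the short sublattice) contribute $O(p^{n-i})$ per $h$ uniformly. The remaining pairs must involve the $2i$-block, and averaging over $B_{22}$ and $\tilde D$ gains the needed factor via Proposition \ref{prop:eqdist}. If the uniform per-$h$ bound fails, I would fall back on averaging over $\tilde D$ first, noting that for generic $\tilde D$ the columns mix and the short directions disappear.
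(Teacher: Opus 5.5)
There is a genuine gap. Your reduction is sound: bounding $F(p^{-1/2}hg)$ by $N_h^2$ and asking for $\sum_{h\in\Delta_{i,p}}N_h^2=o(p^{n(n+1)/2})$ is exactly the form \eqref{eq:error} the paper works with. Counting pairs $(z_1,z_2)$ while averaging over $h=(\tilde D,B_{22})$ is also the right strategy. What is missing is the averaging statement itself.

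Write $z_jh=(px_j'-x_j\tilde D^\intercal,x_j;y_j,x_jB_{22}+y_j\tilde D+py_j')$ as in \eqref{eq:errexp_z1}; your $x'',x'$ are the paper's $x_j,x_j'$. What you need is the image and fiber size over $\F_p$ of the joint map $(\tilde D,B_{22})\mapsto(-x_1\tilde D^\intercal,\,x_1B_{22}+y_1\tilde D,\,-x_2\tilde D^\intercal,\,x_2B_{22}+y_2\tilde D)$. This must be split according to $\mathrm{rk}_p\{x_1,x_2\}$, $\mathrm{rk}_p\{y_1,y_2\}$, and whether $(x_1,y_1),(x_2,y_2)$ are independent. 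In each case you need a codimension bound strong enough that the piece is dominated by \eqref{eq:rk2} or \eqref{eq:rk1}; this is the content of Proposition~\ref{prop:eqdist_err}.

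Proposition~\ref{prop:eqdist} in dimension $i$, applied to $B_{22}$, can at best handle $\mathrm{rk}_p\{x_1,x_2\}=2$, after a further average over $\tilde D$. It is silent when $\mathrm{rk}_p\{x_1,x_2\}\le 1$. For $i=1$ that is every pair, and it is where all the difficulty lies. A uniform per-$h$ bound cannot substitute: for $\tilde D=0$, $B_{22}=0$ the vectors $(0,x;y,0)$ already give $N_h\asymp p^{n/2}$, far above $p^{(n-i)/2}$. So everything hinges on the $h$-average.

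The mechanism you propose for the borderline case is also wrong. Since $\tilde D$ has entries of size up to $p$, the rows $(0\;D)$ are not short. In the first block $px_j'-x_j\tilde D^\intercal$, the vector $x_j'$ (your $x'$) is precisely the integer vector that reduces $x_j\tilde D^\intercal$ modulo $p$. So $x'\neq 0$ is the generic situation: it forces no large coordinate and gives no saving. The same misreading underlies your picture of $n-i$ directions of length $\asymp p^{-1/2}$ in $\Lambda_h$, which holds only for $\tilde D$ near $0$.

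With correct fiber counts, the generic pairs contribute $O(1)$ per $h$ on average, not $p^{n-i}$, and $i=n-1$ causes no trouble for $n\ge 3$. The one genuinely borderline configuration is $n=2$, $i=1$, with $x_2\equiv k_xx_1$, $y_2\equiv k_yy_1$ and $k_x\neq k_y$. There the trivial count gives exactly $O(1)$ after normalization, and your plan lacks the arithmetic input needed. One must show that the triples $(a,b,k_x)$ with $a,b,k_xa,k_xb$ all $O(p^{1/2})$ modulo $p$ number $O(p\log p)$ rather than $O(p^{3/2})$. The paper proves this by counting solutions of $xw-yz=cp$ with $|c|=O(1)$ in a box of side $p^{1/2}$. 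For fixed $(x,y)$ the admissible $(z,w)$ lie on a line with spacing $\|(x,y)\|/\gcd(x,y)$, and one then sums over $\gcd(x,y)$.
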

Note that in the case of a bounded function (which our $F$ is not), the claim in Proposition \ref{prop:error} would be very straightforward to prove. Rather, the substance of this result concerns the case $F \in L^1 (\Gamma \backslash G).$

By unraveling the definitions, and making harmless substitutions --- e.g. replacing $f\circ g$ with the characteristic function of a box --- it suffices to work with
\begin{align}\label{eq:error}
\frac{1}{p^{n(n+1)/2}}\sum_{h}\sum_{z_1,z_2\in\Z^{2n}\setminus \{0\}}f(p^{-\frac{1}{2}}z_1h,p^{-\frac{1}{2}}z_2h),
\end{align}
where $h \in \Delta_{i,p}$ is of the form
\begin{align*}
\begin{pmatrix}
pD^{-\intercal} & B \\ & D
\end{pmatrix}, \mbox{ where }
D=
\begin{pmatrix}
I_{n-i} & \tilde D \\ & pI_i
\end{pmatrix},
B =
\begin{pmatrix}
0 & 0 \\ 0 & B_{22}
\end{pmatrix},
\end{align*}
as described in Lemma \ref{lemma:reps}; here, recall $\tilde D$ is an $(n-i)\times i$ matrix, $B_{22}$ is an $i \times i$ symmetric matrix, and both have entries in $\{0,1,\ldots,p-1\}$.

Write $z_1 = (x'_1,x_1;y_1,y_1')$, where $x'_1,y_1\in\Z^{n-i}$ and $x_1,y'_1\in\Z^i$. Then
\begin{align}\label{eq:errexp_z1}
z_1h=(px'_1-x_1\tilde D^\intercal ,x_1;y_1,x_1B_{22}+y_1\tilde D+py'_1).
\end{align}
Notice that for each choice of $x_1$ and $y_1$, there exist at most unique $x'_1$ and $y'_1$ such that $z_1$ contributes nontrivially to the sum above. 
Likewise, we also have
\begin{align}\label{eq:errexp_z2}
z_2h=(px'_2-x_2\tilde D^\intercal ,x_2;y_2,x_2B_{22}+y_2\tilde D+py'_2),
\end{align}
and the same comment applies.

\subsection{Preliminary analysis}

Consider \eqref{eq:error} with \eqref{eq:errexp_z1},\eqref{eq:errexp_z2} plugged in. By what we just discussed, we can crudely bound the inner sum as a sum over $x_1,y_1$ and $x_2,y_2$, which gives $O(p^n)$ for the inner sum. Also, the sum over $h$ has $p^{i(n-i)+i(i+1)/2}=p^{i(n+1/2)-i^2/2}$ terms, so \eqref{eq:error} can be bounded by
\begin{align*}
p^{i(n+\frac{1}{2})-\frac{i^2}{2}+n-\frac{n(n+1)}{2}} = p^{i(n-\frac{i-1}{2})-\frac{(n-1)n}{2}}.
\end{align*}
In particular, if $i=0$, then \eqref{eq:error} vanishes as $p \rightarrow \infty$. However, some terms are not covered by this naive estimate, and call for the more precise analysis below.

\begin{proposition} \label{prop:eqdist_err}
Suppose $1 \leq i < n$, fix $x_1,x_2\in\F_p^i, y_1,y_2\in\F_p^{n-i}$, and consider the map $T_{x,y}:\F_p^{i(n-i)+i(i+1)/2} \rightarrow \F_p^{2n}$ by
\begin{align*}
T_{x,y}(\tilde D,B_{22}) = 
(-x_1\tilde D^\intercal , x_1B_{22}+y_1\tilde D, -x_2\tilde D^\intercal , x_2B_{22}+y_2\tilde D).
\end{align*}
Let us also write
\begin{align*}
S_{x,y}:=\{(\tilde x_1,\tilde y_1,\tilde x_2, \tilde y_2)\subseteq\F_p^{2n}: (\tilde x_1,x_1;y_1,\tilde y_1)J(\tilde x_2,x_2;y_2,\tilde y_2) \equiv 0\ (\mathrm{mod}\ p)\}.
\end{align*}
Here $\tilde x_1,\tilde x_2\in\F_p^{n-i}$ and $\tilde y_1,\tilde y_2 \in \F_p^i$.

\begin{enumerate}[(a)]
\item Suppose $\mathrm{rk}_p\{x_1,x_2\}=2$. Then $T_{x,y}$ is a surjection onto $S_{x,y}$, and $n(n+1)/2-\dim\ker T_{x,y} > 2n-1$.

\item Suppose $\mathrm{rk}_p\{x_1,x_2\}=1$ and $\rk_p\{y_1,y_2\}=2$. Then $T_{x,y}$ surjects onto the subset of $S_{x,y}$ such that $\mathrm{rk}_p\{(\tilde x_1, x_1),(\tilde x_2, x_2)\}=1$. 
Also, $n(n+1)/2-\dim\ker T_{x,y} > 2n-1$.

\item Suppose $\mathrm{rk}_p\{x_1,x_2\}=1$ and $\rk_p\{y_1,y_2\}=1$, but $(x_1,y_1)$ and $(x_2,y_2)$ are linearly independent. Then $T_{x,y}$ surjects onto the subset of $S_{x,y}$ such that $\mathrm{rk}_p\{(\tilde x_1, x_1),(\tilde x_2, x_2)\}=1$. 
Also, $n(n+1)/2-\dim\ker T_{x,y} \geq 2n-1$.

\item Suppose $\mathrm{rk}_p\{x_1,x_2\}=1$, $(x_1,y_1)$ and $(x_2,y_2)$ are linearly dependent. Then $T_{x,y}$ surjects onto the subset of $S_{x,y}$ such that $(\tilde x_2,x_2;y_2,\tilde y_2) = k(\tilde x_1,x_1;y_1,\tilde y_1)$ for some $k \in\F_p$ or $(\tilde x_1,x_1;y_1,\tilde y_1)=0$. In this case $n(n+1)/2-\dim\ker T_{x,y} > n$.

\item Suppose $\rk_p\{x_1,x_2\}=0$. Then $T_{x,y}$ maps onto the subset of $S_{x,y}$ consisting of the elements of the form $(0,\tilde y_1,0,\tilde y_2)$. Consequently $n(n+1)/2-\dim\ker T_{x,y} \geq 2n-1$ or $n$ depending on whether $\rk_p\{y_1,y_2\}=2$ or $1$, respectively.

\end{enumerate}
\end{proposition}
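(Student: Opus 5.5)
The plan is to exploit that $T_{x,y}$ is $\F_p$-linear in $(\tilde D,B_{22})$ and to split it into its ``$\tilde x$-part'' $\tilde D\mapsto(-x_1\tilde D^\intercal,-x_2\tilde D^\intercal)$ and its ``$\tilde y$-part'' $(\tilde D,B_{22})\mapsto(x_1B_{22}+y_1\tilde D,\,x_2B_{22}+y_2\tilde D)$. I will first record the inclusions valid in every case. The image lies in $S_{x,y}$: by \eqref{eq:errexp_z1}, \eqref{eq:errexp_z2} the vectors $(\tilde x_j,x_j;y_j,\tilde y_j)$ are the reductions mod $p$ of $z_jh$, and $hJh^\intercal=pJ$ gives $z_1hJ(z_2h)^\intercal\equiv_p0$. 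This can also be checked directly, as in the proof of Proposition \ref{prop:eqdist}, by expanding and using the symmetry of $B_{22}$. Moreover $\tilde x_j=-x_j\tilde D^\intercal$, so any linear relation among $x_1,x_2$ forces the same relation among $(\tilde x_j,x_j)$. This explains the rank-$1$ restrictions in (b)--(d) and the vanishing $\tilde x_j=0$ in (e). In (d) the analogous relation between $(x_j,y_j)$ propagates to the whole vector, giving $(\tilde x_2,x_2;y_2,\tilde y_2)=k(\tilde x_1,x_1;y_1,\tilde y_1)$ or the first vector is $0$.

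For surjectivity I will mimic Proposition \ref{prop:eqdist}: change coordinates by $\GL(i,\F_p)$ (acting on $x$, $B_{22}$ by congruence, and on the columns of $\tilde D$) and by $\GL(n-i,\F_p)$ (acting on $y$ and the rows of $\tilde D$), so that the nonzero $x$'s and $y$'s become standard basis vectors. Then I solve in two stages. First choose $\tilde D$ to realize the prescribed $\tilde x$'s; this is possible since $\tilde D\mapsto\tilde D x_j^\intercal$ is onto $\F_p^{n-i}$ for each independent $x_j$. Then, with $\tilde D$ fixed up to its remaining freedom, solve $x_jB_{22}=\tilde y_j-y_j\tilde D$. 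By Proposition \ref{prop:eqdist} (in dimension $i$, with $k=\mathrm{rk}_p\{x_1,x_2\}$) this is solvable exactly when the right-hand sides satisfy the symmetric compatibility condition. Expanding this condition shows it coincides with the defining relation of $S_{x,y}$ once the $\tilde x$-part is accounted for. In the rank-deficient cases (b),(c) the compatibility condition is not automatic, and the residual freedom in $\tilde D$ (those $\tilde D$ with $\tilde Dx_1^\intercal$ fixed) must be used to absorb the component of $\tilde y_2-c\tilde y_1$ not reachable by $B_{22}$. This is where the hypothesis on $\mathrm{rk}_p\{y_1,y_2\}$ enters.

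For the dimension statements I will use the identity
\[
\frac{n(n+1)}2-\dim\ker T_{x,y}=\Big(\frac{n(n+1)}2-i(n-i)-\frac{i(i+1)}2\Big)+\rk T_{x,y}=\frac{(n-i)(n-i+1)}2+\dim\im T_{x,y},
\]
and I will compute $\dim\im T_{x,y}$ as the dimension of the described subset of $S_{x,y}$. In (a) this is $2n-1$, a hyperplane, so the inequality is strict because $n-i\ge1$. In (b),(c) it is $2n-1$ minus the $n-i$ relations $\tilde x_2=c\tilde x_1$, plus whatever the pairing constraint loses or gains. In (d) it is about $n+1$, and in (e) it is $2i$ or $i$ depending on the $y$'s, each then added to $(n-i)(n-i+1)/2$.

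The main obstacle I expect is the rank-$1$ cases (b) and (c). There the inequality is tight when $n-i$ is small, e.g. $i=n-1$, where $(n-i)(n-i+1)/2=1$. So the exact image dimension must be pinned down carefully, including whether the pairing relation is independent of the relations $\tilde x_2=c\tilde x_1$, and possibly with separate treatment of the subcase where one $x_j$ vanishes. I would first handle $i=n-1$ explicitly in standard coordinates to see which constraints are genuinely independent, and then generalize.
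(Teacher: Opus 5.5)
Your plan is correct and is essentially the paper's argument. The paper also normalizes coordinates, fixes $\tilde D$ to produce the $\tilde x_j$, and then solves for $B_{22}$. In the rank-one cases it likewise spends the leftover freedom of $\tilde D$ on $(y_2-cy_1)\tilde D=\tilde y_2-c\tilde y_1$; this needs only $y_2\neq cy_1$, so it covers (b) and (c) at once. Your version is organized more cleanly, through Proposition \ref{prop:eqdist} in dimension $i$ and the identity $\tfrac{n(n+1)}{2}-\dim\ker T_{x,y}=\tfrac{(n-i)(n-i+1)}{2}+\dim\im T_{x,y}$, which replaces the paper's maximization over $i$.

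When you fill in the counts, two corrections:
\begin{itemize}
\item The pairing is a nonzero functional on $\{\tilde x_2=c\tilde x_1\}$, so the image in (b) and (c) has dimension exactly $n+i-1$.
\item In (d) the image has dimension exactly $n$, not about $n+1$. The strict inequality $>n$ comes from $(n-i)(n-i+1)/2\geq 1$.
\end{itemize}
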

\begin{proof}

(a) 
Suppose first $\rk_p\{x_1,x_2\}=\rk_p\{y_1,y_2\}=2$. By multiplying $\tilde D$ from the left by $\GL(n-i,\F_p)$ and from the right by $\GL(i,\F_p)$, and multiplying $B_{22}$ from the left by the same element of $\GL(i,\F_p)$, we may assume that $x_1=(1,0,0,\ldots)$, $x_2=(0,1,0,\ldots)$, and similarly with $y_1$ and $y_2$. From the construction of $T_{x,y}$ it can be straightforwardly checked that $T_{x,y}$ indeed maps into $S_{x,y}$. We need show that it also surjects.

Let $(\tilde x_1,\tilde y_1,\tilde x_2,\tilde y_2) \in S_{x,y}$. The first and second rows of $\tilde D^\intercal $ can be chosen so that $-x_1\tilde D^\intercal =\tilde x_1$ and $-x_2\tilde D^\intercal =\tilde x_2$. Also, the first row of $B_{22}$ and the first column of $\tilde D^\intercal $ can be chosen so that $x_1B_{22}+y_1\tilde D = \tilde y_1$. Though the first entry of $y_1D$ is forced to coincide with that of $x_1\tilde D^\intercal $, the first entry of $x_1B_{22}$ is unconstrained, so this is indeed possible. It remains to show that $x_2B_{22}+y_2\tilde D=\tilde y_2$ is solvable, under all the choices made so far. Observe that, if the second to last entries of these two vectors coincide, then the first automatically coincide as well, since both vectors are subject to the defining constraint 
\begin{align} \label{eq:condition}
(\tilde x_1,x_1;y_1,\tilde y_1)J(\tilde x_2,x_2;y_2,\tilde y_2) \equiv 0\ (\mathrm{mod}\ p)
\end{align}
of $S_{x,y}$, which forces what their first entries must be given the rest. Since the second row of $B_{22}$ (which equals $x_2B_{22}$) can be arbitrarily chosen except its first entry (which coincides with the second entry of $x_1B_{22}$), this can indeed be made to happen. The inequality on $\dim\ker T_{x,y}$ follows from the comparison with Proposition \ref{prop:eqdist}.

The same argument goes through even if $\rk_p\{y_1,y_2\}=1$ or $0$. This proves (a).

For (b)-(d), we have $\rk_p\{x_1,x_2\}=1$. Without loss of generality, we may assume $x_1=(1,0,\ldots)$ and $x_2=k_xx_1$ for some $k_x\in\F_p$.

(b) In this case, $\rk_p\{y_1,y_2\}=2$; again we may assume $y_1=(1,0,0,\ldots)$ and $y_2=(0,1,0,\ldots)$. Similarly as before, $T_{x,y}$ clearly maps into the said subset of $S_{x,y}$, and we need to show that it surjects. First we set the first row of $\tilde D^\intercal $ so that $-x_1\tilde D^\intercal =\tilde x_1$. This decides the first entries of $y_1\tilde D$ and $y_2\tilde D$, but by adjusting the other entries of $y_1\tilde D$ and the $(1,1)$-entry of $B_{22}$, we can achieve $x_1B_{22}+y_1\tilde D=\tilde y_1$. This in turn decides the first entry of $x_2B_{22}+y_2\tilde D$, but once we adjust the second to last entries of the second row of $\tilde D$ so that its second to last entries coincide with those of $\tilde y_2$, their first entries are also forced to match, since they are both determined by \eqref{eq:condition}. Therefore we can attain $x_2B_{22}+y_2\tilde D=\tilde y_2$. 

Since $\rk_p\{y_1,y_2\}=2$, $i \leq n-2$ must hold, and thus
\begin{align*}
\dim \ker T_{x,y} &= i(n-i)+i(i+1)/2 - (n+i-1) \\
&= (i-1)n-i^2/2-i/2+1 \\
&\leq (n-3)n-n^2/2+2n-2-n/2+2 \\
&= n^2/2-3n/2
\end{align*}
(notice the second line is maximized for $i=n-1/2$) from which it follows that $n(n+1)/2-\dim\ker T_{x,y} \geq 2n$ for $n \geq 2$.

(c) Next, suppose $\rk_p\{y_1,y_2\}=1$, and $y_2=k_yy_1$ but $k_y\neq k_x$. (The case $y_1\equiv_p0$ can be proved similarly and is omitted.) In this case, by solving for
\begin{align*}
\begin{pmatrix} 1 & 1 \\ k_x & k_y\end{pmatrix}
\begin{pmatrix} x_1B_{22} \\ y_1\tilde D \end{pmatrix}
=
\begin{pmatrix} \tilde y_1 \\ \tilde y_2 \end{pmatrix},
\end{align*}
we can determine the first row of $\tilde D$ and of $B_{22}$. It remains to determine the first column of $\tilde D$(= first row of $\tilde D^\intercal $) so that $-x_1\tilde D^\intercal  = \tilde x_1$. Again, by the same logic as earlier, this is automatically satisfied on the first entry, and the remaining entries of $\tilde D$ can be adjusted to make this true. 

$\dim \ker T_{x,y}$ is the same as in (b), except that $i=n-1$ is also possible. Thus
\begin{align*}
\dim\ker T_{x,y} &= i(n-i)+i(i+1)/2-(n+i-1) \\
&\leq (n-1) + (n-1)n/2 -2(n-1) \\
&= n^2/2-3n/2+1,
\end{align*}
and $n(n+1)/2 -\dim\ker T_{x,y} \geq 2n-1$.

(d) This time we suppose $k_y=k_x$. (The case $y_1\equiv_py_2\equiv_p0$ is similar and thus omitted.) We only need to worry about $\tilde x_1,\tilde y_1$. Fix the first row of $\tilde D^\intercal $ so that $-x_1\tilde D^\intercal =\tilde x_1$, and the first column so that $y_1\tilde D=\tilde y_1$ except on the first entry. Then the $(1,1)$-entry of $B_{22}$ can be set so that $x_1B_{22}+y_1\tilde D=\tilde y_1$. As for $\dim\ker T_{x,y}$,
\begin{align*}
\dim\ker T_{x,y} &= i(n-i)+i(i+1)/2-n \\
&\leq (n-1) + (n-1)n/2 -n \\
&= n^2/2-n/2-1
\end{align*}
and $n(n+1)/2-\dim\ker T_{x,y}\geq n+1$.

Finally, (e) is immediate from the fact that $T_{x,y} = (0,y_1\tilde D,0,y_2\tilde D)$, and that its image has dimension $2i$ if $\rk_p\{y_1,y_2\}=2$ or $i$ if $\rk_p\{y_1,y_2\}=1$.
\end{proof}

\subsection{Finishing the proof of Proposition \ref{prop:error}}

The sum \eqref{eq:error} can be broken up into parts (a)-(e) in accordance with Proposition \ref{prop:eqdist_err}. The sum corresponding to (a) can be seen to vanish by comparing with \eqref{eq:rk2}: in place of the $p^{-(2n-1)}$ factor at the front of \eqref{eq:rk2}, we would have $p^{-n(n+1)/2+\dim\ker T_{x,y}}$ from (a) --- notice that in case (a), $\dim\ker T_{x,y}$ is independent of the choice of $x$'s or $y$'s --- which vanishes faster. Case (b) follows by the same argument; in this case, we are summing over an even smaller set of vectors. 

In case (c), we would be considering the sum
\begin{align*}
\frac{1}{p^{-e_c}} \sum_{x_1\in\Z^n\backslash\{0\}}
\sum_{\substack{y_1\in\Z^{n-i} \\ y_1',y_2'\in\Z^i}} 
\sum_{\substack{0 \leq k_x\neq k_y < p\\ (x_1;y_1,y_1')J(k_xx_1;k_yy_1,y_2')^\intercal \equiv_p=0}}f(p^{-\frac{1}{2}}(x_1;y_1,y_1'),p^{-\frac{1}{2}}(k_xx_1;k_yy_1,y_2')),
\end{align*}
for some $e_c \geq 2n-1$. For each $x_1\neq 0$, there exist at most $O(\sqrt p)$ values of $k_x$ that contribute. Similarly, for each $y_1 \neq 0$, there exist at most $O(\sqrt p)$ contributing values of $k_y$; if $y_1=0$ we can do a separate (easier) analysis which we omit. The two outer sums contribute additional $O(p^{(2n+i-1)/2})$. Thus the total sum has size at most
\begin{align*}
O(p^{-(2n-1)+1+n+(i-1)/2})=O(p^{-n+i/2+3/2}),
\end{align*}
which vanishes as $p\rightarrow\infty$ except when $n=2$ and $i=1$. This case requires slightly more attention, and will be handled separately below.

Case (d) can be easily compared with \eqref{eq:rk1} to see that it is $o_p(1)$. Case (e) can be compared with \eqref{eq:rk2} if $\rk_p\{y_1,y_2\}=2$, and \eqref{eq:rk1} if $\rk_p\{y_1,y_2\}=1$; in both cases, the sum is possibly divided by the same power of $p$, but the sum itself is taken over a much smaller lattice, causing it to vanish.

\subsection{Case (c), $\boldsymbol{n=2}$ and $\boldsymbol{i=1}$}

In this case, the above sum is no greater than
\begin{align*}
\frac{1}{p^3}\underset{(a,b;c,d)J(k_xa,k_xb;k_yc,d')^\intercal \equiv_p0}{\sum_{a,b,c,d,d'\in\Z}\sum_{0\leq k_x,k_y <p}} f(p^{-\frac{1}{2}}(a,b;c,d),p^{-\frac{1}{2}}(k_xa,k_xb;k_yc,d')).
\end{align*}
We will obtain the necessary saving by carefully counting the number of the triples $(a,b,k_x)$ such that $a,b,k_xa,k_xb$ (mod $p$) are all bounded by $Cp^{1/2}$ for a fixed constant $C$; here, to be more precise, by an integer mod $p$, we mean its residue class representative in $(-p/2,p/2)$. We will show that they take up $O(p\log p)$, which is enough for the above sum to vanish.

If $a$ or $b$ were zero, there are $O(p)$ such triples. Henceforth we assume $a,b$ are both nonzero. Let $x,y,z,w$ be integers such that $x\equiv a,y\equiv b,z\equiv k_xa,w\equiv k_xb$ (mod $p$). These satisfy the equation
\begin{align} \label{eq:xw-yz}
xw-yz \equiv 0 \ (\mbox{mod $p$}).
\end{align}
Conversely, if $x\neq 0,y\neq 0,z,w$ satisfy \eqref{eq:xw-yz}, we can recover the values of $a,b,k_x$. Thus it suffices to show that the number of (integer) solutions $(x,y,z,w) \in [-Cp^{1/2},Cp^{1/2}]^4,$ $x,y\neq 0$ to \eqref{eq:xw-yz} is $O(p\log p)$. 

Any solution to \eqref{eq:xw-yz} must satisfy $xw-yz=cp$ for some $|c|\leq 2C^2$. It suffice to show that for each such $c$ the number of solutions in $[-Cp^{1/2},Cp^{1/2}]^4$, with $x,y\neq 0$, is $O(p\log p)$. Suppose $\gcd(x,y)=g$, and write $x=g \xi$ and $y=g \zeta$. The set of pairs $(z,w)$ satisfying $xw-yz=cp$ form points on a line segment of length $O(p^{1/2})$, separated by a distance of $\smash{\sqrt{\xi^2+\zeta^2}}$. Therefore, there are $\ll p^{1/2}/\sqrt{\xi^2+\zeta^2}$ solutions for each pair $(x,y)$, and in total, there are
\begin{align*}
\sum_{g=1}^{Cp^{1/2}}\sum_{\substack{\gcd(\xi,\zeta) = 1}} {\bf 1} \!\! \left[ \sqrt{\xi^2+\zeta^2}=O(p^{\frac{1}{2}} g^{-1}) \right] O\!  \left(\frac{p^\frac{1}{2}}{\sqrt{\xi^2+\zeta^2}}\right)
\end{align*}
solutions. By summation by parts, one can show this is at most
\begin{align*}
\sum_{g=1}^{Cp^{1/2}} O \! \left(\frac{p}{g}\right) = O \! \left(p\log p\right)
\end{align*}
as desired.

\section{Application I: mean square discrepancy in lattice point counting} \label{sec:app_disc}

\subsection{Setup} \label{subsect:application_discrepancy}

Let $B\subseteq \R^{2n}$ be the ball at the origin of radius $R$. As in \cite{Sch60} and \cite{KY17}, for a lattice $\Lambda\subseteq\R^{2n}$ of unit covolume, define the \textit{lattice-point discrepancy}
\begin{align*}
D(\Lambda,B) = \left| \frac{|\Lambda \cap B|}{\mathrm{vol}(B)} - 1 \right|.
\end{align*}
We are then interested in understanding the mean square discrepancy
\begin{align*}
\int_{\ylat_{2n}} |D(\Z^{2n}g,B)|^2d\mu_n 
&= \frac{1}{\mathrm{vol}(B)^2}\int_{\ylat_{2n}} \Biggl( \, \sum_{\substack{z\in\Z^{2n}\\ z\neq 0}} \mathbf{1}_B(zg) - \mathrm{vol}(B)\Biggr)^{\! \! 2} d\mu \\
&= \frac{1}{\mathrm{vol}(B)^2}\int_{\ylat_{2n}} \sum_{\substack{z_1,z_2\in\Z^{2n}\\ z_1,z_2\neq 0}} \mathbf{1}_B(z_1g)\mathbf{1}_B(z_2g)-\mathrm{vol}(B)^2 d\mu.
\end{align*}
The contribution from the linearly dependent pairs $z_1,z_2$ (\emph{cf.} Section \ref{sec:rk1_dep}), which coincides with the corresponding part in the original Rogers formula (Theorem \ref{thm:RogersIntegrationFormula}) and is thus already well-understood, is given by
\begin{align} 
\frac{1}{\zeta(2n)}\sum_{\substack{j\geq1\\ k \neq 0}} \int_{\R^{2n}}\mathbf{1}_B(jz)\mathbf{1}_B(kz)dz &= \frac{2\mathrm{vol}(B)}{\zeta(2n)}\sum_{j,k\geq 1}\frac{1}{\max(j^{2n},k^{2n})} \notag \\
&= \frac{2\mathrm{vol}(B)}{\zeta(2n)}\sum_{r\geq1}\frac{2r-1}{r^{2n}} \notag \\
&= \mathrm{vol}(B)\cdot\left(\frac{4\zeta(2n-1)}{\zeta(2n)}-2\right).\label{eq:rk1_pointcounting}
\end{align}
Therefore, the main concern is the contribution from the linearly independent pairs $z_1,z_2$. Recall our rank-2 part of the second moment for the Siegel transform of $\mathbf 1_B$:
\begin{align*}
\frac{1}{\zeta(2n)}
\sum_{\substack{j\geq1 \\ k \in \Z}}
\int_{\R^{2n}}\mathbf 1_B(jz)\mathrm{vol}(L(k,z)\cap B)\frac{dz}{\|z\|}.
\end{align*}
We want to consider the difference between this expression and $\mathrm{vol}(B)^2$, namely
\begin{align} \label{eq:discrepancy_total}
\frac{1}{\zeta(2n)}\sum_{j\geq1}\int_{\R^{2n}}\mathbf 1_B(jz)\left(\sum_{k\in\Z}\frac{1}{\|z\|}\mathrm{vol}(L(k,z)\cap B)-\mathrm{vol}(B)\right)dz.
\end{align}
This leads us to consider the integral
\begin{align} \label{eq:discrepancy_individual}
\int_{\R^{2n}}\mathbf 1_B(jz)\left(\sum_{k\in\Z}\frac{1}{\|z\|}\mathrm{vol}(L(k,z)\cap B)-\mathrm{vol}(B)\right)dz
\end{align}
for each $j \geq 1$. Writing $V_m$ and $S_m$ for the volume and surface area of the $m$-dimensional unit ball, respectively, and writing in polar coordinates, the above is equal to
\begin{align*}
&S_{2n}\int_0^{R/j}\left(\sum_{k\in\Z}\frac{V_{2n-1}}{r}\left(R^2-\frac{k^2}{r^2}\right)_+^{n-\frac{1}{2}}-V_{2m}R^{2n}\right)r^{2n-1}dr \\
&= S_{2n}\int_0^{R/j}\left(\sum_{k\in\Z}\frac{V_{2n-1}}{r}\left(R^2r^2-k^2\right)_+^{n-\frac{1}{2}}-V_{2m}R^{2n}r^{2n-1}\right)dr,
\end{align*}
where the $+$ subscript means the expression is to be interpreted as zero if the inside of the parenthesis is negative. Making the substitution $w=Rr$, this becomes $S_{2n}$ times
\begin{align*}
&\int_0^{R^2/j} \frac{1}{w}\sum_{k\in\Z}V_{2n-1}(w^2-k^2)_+^{n-\frac{1}{2}}-V_{2n}w^{2n-1}dw 
= \int_0^{R^2/j}\Delta(w)\frac{dw}{w},
\end{align*}
where
\begin{align*}
\Delta(w):=\sum_{k\in\Z}V_{2n-1}(w^2-k^2)_+^{n-\frac{1}{2}}-V_{2n}w^{2n}.
\end{align*}

\begin{proposition}\label{prop:bessel}
We have
\begin{align*}
\Delta(w) = 2\sum_{m=1}^\infty\left(\frac{w}{m}\right)^nJ_n(2\pi mw),
\end{align*}
where $J_\nu$ is the Bessel function of the first kind.
\end{proposition}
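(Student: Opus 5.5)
The plan is to apply the one-dimensional Poisson summation formula in $k$ to the function
\begin{align*}
\phi_w(t) := V_{2n-1}(w^2-t^2)_+^{n-\frac12}, \qquad t \in \R.
\end{align*}
For fixed $w>0$ this function is compactly supported and continuous, and it is $C^{n-1}$ with Hölder-type behaviour at $t=\pm w$. The $\xi=0$ Fourier coefficient should produce exactly the subtracted term $V_{2n}w^{2n}$, and the nonzero frequencies should give the Bessel series.

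Concretely, I would first compute $\widehat{\phi}_w(0) = V_{2n-1}\int_{-w}^{w}(w^2-t^2)^{n-1/2}\,dt$. Geometrically, this is the volume of the $2n$-ball of radius $w$, obtained by integrating the $(2n-1)$-dimensional cross-sectional volumes; equivalently, it is the Radon-transform identity $\int_\R \radon\mathbf 1_{B(w)}(v,t)\,dt = \mathrm{vol}(B(w))$. Hence $\widehat\phi_w(0)=V_{2n}w^{2n}$.

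Next, for $m\neq 0$, I would compute $\widehat\phi_w(m)$ by the Fourier slice theorem, already used in Section \ref{subsect:relation_between_formulas}. Since $\phi_w(t)=(\radon \mathbf 1_{B(w)})(v,t)$ for any unit $v$, we get $\widehat\phi_w(m)=\widehat{\mathbf 1_{B(w)}}(mv)$. The Fourier transform of the indicator of a ball in $\R^{2n}$ is classical:
\begin{align*}
\widehat{\mathbf 1_{B(w)}}(\xi) = \left(\frac{w}{\|\xi\|}\right)^{n} J_n(2\pi w\|\xi\|).
\end{align*}
This follows from the radial Fourier transform formula in dimension $d=2n$, namely $w^{d/2}\|\xi\|^{-d/2}J_{d/2}(2\pi w\|\xi\|)$. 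One could alternatively verify it directly via Poisson's integral representation of $J_\nu$, namely $\int_{-1}^1(1-s^2)^{\nu-1/2}e^{ius}\,ds \propto J_\nu(u)/u^\nu$, and then check that the constant $V_{2n-1}$ matches. Summing over $m\neq0$ and pairing $\pm m$ then gives $2\sum_{m\ge1}(w/m)^nJ_n(2\pi mw)$.

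The main obstacle is justifying pointwise Poisson summation, $\sum_k\phi_w(k)=\sum_m\widehat\phi_w(m)$ for every $w$. Since $J_n(u)=O(u^{-1/2})$, we have $\widehat\phi_w(m)=O_w(m^{-n-1/2})$, which is absolutely summable for $n\ge1$. Moreover $\phi_w$ is continuous and of bounded variation, and for $n\ge 2$ it is even $C^1$. Therefore the standard theorem applies: for continuous, compactly supported $\phi$ whose Fourier coefficients are absolutely summable, the periodization equals its Fourier series everywhere. I would cite this together with the Bessel asymptotics, and this completes the argument.
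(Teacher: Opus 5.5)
Your proposal is correct and follows the paper's proof. Both apply Poisson summation to $V_{2n-1}(w^2-t^2)_+^{n-1/2}$: the zero frequency gives $V_{2n}w^{2n}$, and the nonzero frequencies give $(w/|m|)^nJ_n(2\pi|m|w)$. The only difference is cosmetic. You obtain the nonzero coefficients from the Fourier slice theorem together with the Fourier transform of the ball in $\R^{2n}$. The paper evaluates them directly via Poisson's integral $\int_{-1}^1(1-t^2)^{\nu-1/2}\cos(zt)\,dt=\pi^{1/2}\Gamma(\nu+\tfrac12)(2/z)^\nu J_\nu(z)$ and $V_{2n-1}=\pi^{n-1/2}/\Gamma(n+\tfrac12)$, which is the alternative you mention. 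Your justification of pointwise Poisson summation via the $O_w(|m|^{-n-1/2})$ decay of the coefficients is more explicit than the paper's.
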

\begin{proof}
Let $f_w(x) = V_{2n-1}(w^2-x^2)_+^{n-1/2}$. By the Poisson summation formula,
\begin{align*}
\sum_{k\in\Z}f_w(k)=\sum_{m\in\Z}\widehat{f}_w(m),
\end{align*}
so the desired statement will follow upon the computation of the Fourier coefficients $\widehat{f}_w(m)$ --- it will also turn out that $\widehat{f}_w$ decays quickly enough for the legitimate application of the Poisson summation formula. First notice
\begin{align*}
\widehat{f}_w(0)=\int V_{2n-1}(w^2-x^2)_+^{n-\frac{1}{2}}dx=V_{2n}w^{2n}.
\end{align*}

In general, since $f_w$ is even,
\begin{align*}
\widehat{f}_w(m) &= \int V_{2n-1}(w^2-x^2)_+^{n-\frac{1}{2}}e^{-2\pi i m x}dx\\
&=\int_{-w}^w V_{2n-1}(w^2-x^2)^{n-\frac{1}{2}}\cos(2\pi m x)dx,
\end{align*}
and $\widehat f_w(m)= \widehat f_w(-m)$. Hence we assume $m \geq 1$ for the rest of the proof.

Substituting $x=wt$, we have
\begin{align*}
\widehat f_w(m)=V_{2n-1}w^{2n}\int_{-1}^1 (1-t^2)^{n-\frac{1}{2}}\cos(2\pi mwt)dt.
\end{align*}
Recalling the Poisson integral for the Bessel function
\begin{align*}
\int_{-1}^1(1-t^2)^{\nu-\frac{1}{2}}\cos(zt)dt = \pi^\frac{1}{2}\Gamma\left(\nu+\frac{1}{2}\right)\left(\frac{2}{z}\right)^\nu J_\nu(z),
\end{align*}
the formula $V_{2n-1} = \pi^{n-1/2}/\Gamma(n+1/2)$, 
and taking $\nu=n, z=2\pi m w$,  we obtain
\begin{align*}
\widehat f_w(m)=\left(\frac{w}{m}\right)^nJ_n(2\pi mw).
\end{align*}
This completes the proof.
\end{proof}

Thanks to the above proposition, \eqref{eq:discrepancy_individual} becomes
\begin{align*}
2S_{2n}\int_0^{R^2/j} \sum_{m=1}^\infty\left(\frac{w}{m}\right)^nJ_n(2\pi mw)\frac{dw}{w}.
\end{align*}

\subsection{Estimate for small $\boldsymbol j$}
To proceed, we recall Hankel's expansion \cite[10.17.3]{DLMF}
\begin{align}\label{eq:hankel}
J_\nu(z)=\sqrt\frac{2}{\pi z}\left(\cos (z-\theta_n)+\frac{a_1}{z}\sin(z-\theta_n) + O\left(\frac{1}{z^2}\right)\right)
\end{align}
for $z \gg 0$, where $\theta_n=(2n+1)\pi/4$, and $a_1$ is an explicit constant depending on $\nu$. Choose a constant $R_0>1$ such that the above expansion holds for $\nu=n, z\geq2\pi w$, and $w \geq R_0^2$.

We first assume $R^2/j > R_0^2$, and estimate the integral
\begin{align*}
\int_0^{R^2/j}\Delta(w)\frac{dw}{w};
\end{align*}
the cases $R^2/j < R_0^2$ will be dealt with later. First notice that
\begin{align*}
\int_0^{R_0^2} \sum_{m=1}^\infty\left(\frac{w}{m}\right)^nJ_n(2\pi mw)\frac{dw}{w} = O_n(1),
\end{align*}
since the left-hand side is independent of $R$. From $R_0^2$ to $R^2/j$, we plug in \eqref{eq:hankel} and integrate term by term. The cosine term contributes, by integration by parts,
\begin{align*}
&\int_{R_0^2}^{R^2/j} \sum_{m=1}^\infty \frac{w^{n-\frac{3}{2}}}{\pi m^{n+\frac{1}{2}}}\cos(2\pi mw-\theta_n)dw\\
&=\frac{(R^2/j)^{n-\frac{3}{2}}}{2\pi^2}\sum_{m=1}^\infty\frac{1}{m^{n+\frac{3}{2}}}\sin(2\pi m R^2/j -\theta_n)+O_n((R^2/j)^{n-\frac{5}{2}}).
\end{align*}
By similar computations, the remaining terms from \eqref{eq:hankel} can be seen to contribute $O_n((R^2/j)^{n-5/2})$. Hence, for each $j$ such that $j<R^2/R_0^2$, \eqref{eq:discrepancy_individual} is equal to
\begin{align}\label{eq:disc_ind_small_j}
\frac{R^{2n-3}}{2\pi^2j^{n-\frac{3}{2}}}\sum_{m=1}^\infty\frac{1}{m^{n+\frac{3}{2}}}\sin(2\pi m R^2/j -\theta_n) + O_n\left(\frac{R^{2n-5}}{j^{n-\frac{5}{2}}}\right).
\end{align}

\subsection{Estimate for large $\boldsymbol j$}
We now (crudely) estimate the sum of the terms \eqref{eq:discrepancy_individual} over all $j$ such that $R^2/j \leq R^2_0$.

First suppose $1 < R^2/j \leq R_0^2$. Then
\begin{align*}
\int_0^{R^2/j}\Delta(w)\frac{dw}{w} 
&=\int_0^1+\int_1^{R^2/j}\Delta(w)\frac{dw}{w} \\
&=O_n(1) + O_n\left(\int_1^{R^2/j} V_{2n-1}w^{2n-2}+V_{2n}w^{2n-1}dw\right) \\
&=O_n(1)+O_n\left(\frac{R^{4n}}{j^{2n}}\right)=O_n(1).
\end{align*}

In case $R^2/j\leq 1$, we have
\begin{align*}
\int_0^{R^2/j}\Delta(w)\frac{dw}{w} 
&=\int_1^{R^2/j} V_{2n-1}w^{2n-2}-V_{2n}w^{2n-1}dw
=O_n\left(\frac{R^{4n-2}}{j^{2n-1}}\right).
\end{align*}

Therefore, the sum of \eqref{eq:discrepancy_individual} over all $j \geq R^2/R_0^2$ is
\begin{align*}
O_n(1)\cdot O_n(R^2) + O_n\left(R^{4n-2}\int_{R^2}^\infty\frac{1}{j^{2n-1}}dj\right) 
= O_n(R^2).
\end{align*}

\subsection{Conclusion}

Temporarily let us write for convenience
\begin{align*}
M(j,R) =\frac{1}{2\pi^2j^{n-\frac{3}{2}}}\sum_{m=1}^\infty\frac{1}{m^{n+\frac{3}{2}}}\sin(2\pi m R^2/j -\theta_n).
\end{align*}
Collecting the the results from the previous sections, we conclude that \eqref{eq:discrepancy_total} is equal to
\begin{align}\label{eq:disc_aux1}
\frac{S_{2n}}{\zeta(2n)}\cdot R^{2n-3}\sum_{1 \leq j <R^2/R_0^2}M(j,R) + O_n(R^{2n-5}+R^2).
\end{align}

Since this is of order $O_n(R^{2n-1})$, and thus $o(\mathrm{vol}(B))$, it is already meaningful for all $n \geq 2$, . But if $n \geq 4$, we can say more. In such cases, it holds that
\begin{align}\label{eq:disc_aux2}
\left(\zeta\biggl( \! n-\frac{3}{2}\biggr)-1\right)\zeta\biggl(\! n+\frac{3}{2}\biggr) 
< 2-\zeta\biggl(\! n+\frac{3}{2}\biggr),
\end{align}
or equivalently
\begin{align*}
\zeta\biggl(\! n-\frac{3}{2}\biggr)\zeta\biggl(\! n+\frac{3}{2}\biggr) < 2.
\end{align*}

The left-hand side of \eqref{eq:disc_aux2} bounds from above the contributions of the terms $2 \leq j < R^2/R_0^2$ to the sum in \eqref{eq:disc_aux1}. Likewise, the right-hand side of \eqref{eq:disc_aux2} bounds from below the maximal possible contribution from the $j=1$ term. Thus, \eqref{eq:disc_aux2} implies that there exist infinitely many values of $R$ such that 
$$\sum_{1\leq j <R^2/R_0^2} M(j,R) > 0,$$
and also infinitely many values of $R$ such that
$$\sum_{1\leq j <R^2/R_0^2} M(j,R) < 0.$$

Consequently, for $n \geq 4$, \eqref{eq:discrepancy_total} oscillates at an amplitude of $O_n(R^{2n-3})$. For large values of $n$ and $R$, it is well-approximated by
\begin{align*}
\frac{S_{2n}}{2\pi^2\zeta(2n)}\cdot R^{2n-3}\sin(2\pi R^2-\theta_n),
\end{align*}
which is the contribution from the $j=1$ term.

In summary, we obtain the following asymptotic on the mean square discrepancy.
\begin{theorem}\label{thm:mean_sq_disc}
For $n \geq 2$ and the ball $B\subseteq\R^{2n}$ at origin of radius $R$,
\begin{align*}
\int_{\ylat_{2n}} |D(\Z^{2n}g,B)|^2d\mu_n = \frac{1}{\mathrm{vol}(B)}\left(\frac{4\zeta(2n-1)}{\zeta(2n)}-2+O_n(R^{-3})\right).
\end{align*}
Moreover, if $n \geq 4$, then the error changes its sign infinitely many times as $R \rightarrow \infty$. More precisely, the error can be written as 
\begin{align*}
\frac{1+O(2^{-n})}{\pi^2\zeta(2n)}\cdot R^{-3}\sin(2\pi R^2-\theta_n) + O_n(R^{-5})
\end{align*}
where $\theta_n = (2n+1)\pi/4$.
\end{theorem}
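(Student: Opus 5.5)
The plan is to assemble the pieces already computed in this section. Apply Theorem \ref{thm:main} to $f(z_1,z_2)=\mathbf 1_B(z_1)\mathbf 1_B(z_2)$. Expanding the square, as at the start of Section \ref{subsect:application_discrepancy}, gives
\begin{align*}
\mathrm{vol}(B)^2\int_{\ylat_{2n}}|D|^2\,d\mu = \eqref{eq:rk1_pointcounting} + \eqref{eq:discrepancy_total},
\end{align*}
where \eqref{eq:rk1_pointcounting} is the rank-one (linearly dependent) contribution. It equals $\mathrm{vol}(B)\bigl(4\zeta(2n-1)/\zeta(2n)-2\bigr)$, and dividing by $\mathrm{vol}(B)^2$ yields the main term. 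It therefore remains to show that \eqref{eq:discrepancy_total} equals $\mathrm{vol}(B)\cdot O_n(R^{-3})$, i.e.\ $O_n(R^{2n-3})$, since $\mathrm{vol}(B)=V_{2n}R^{2n}$.

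For this, take \eqref{eq:disc_aux1}, which was derived from Proposition \ref{prop:bessel}, Hankel's expansion \eqref{eq:hankel}, and the split into small $j$ and large $j$. Since the series defining $M(j,R)$ is bounded by $\zeta(n+3/2)/(2\pi^2 j^{n-3/2})$, the sum $\sum_j M(j,R)$ is $O_n(1)$ whenever $n-3/2>1$, i.e.\ $n\ge 3$. The error term $O_n(R^{2n-5}+R^2)$ is then $O_n(R^{2n-3})$ for $n\geq 3$ (check: $2\le 2n-3$).

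For $n=2$ the $j$-sum $\sum_{j<R^2/R_0^2} j^{-1/2}$ is of order $R$. This case is the main obstacle. I would first exploit cancellation in $\sum_j \sin(2\pi mR^2/j-\theta_n)j^{-1/2}$ through a van der Corput / exponential-sum estimate. If that fails, I would fall back on a direct bound of $\int_0^{R^2/j}\Delta(w)\,dw/w$ using a uniform bound $\Delta(w)=O(w^{n-3/2})$ together with integration by parts, so that each $j$ contributes $O((R^2/j)^{n-3/2})$ with an additional oscillatory saving. Note also that the $R^2$ term from large $j$ is $O(R^{2n-3})$ only if $n\ge 5/2$; for $n=2$ one has to redo the large-$j$ estimate, where $\Delta(w)=O(w^{2n-2})$ for $w\le1$ already gives something better. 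I expect this $n=2$ bookkeeping to be the delicate point.

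For the refined statement with $n\ge4$, keep the $j=1$ term explicitly,
\begin{align*}
\frac{S_{2n}}{2\pi^2\zeta(2n)}R^{2n-3}\sum_{m\geq1}m^{-n-3/2}\sin(2\pi mR^2-\theta_n).
\end{align*}
The $m\ge2$ terms and the $j\ge2$ terms are together $O(2^{-n})$ relative to the main one, because $\zeta(n+3/2)-1$ and $\zeta(n-3/2)-1$ are both $O(2^{-n})$ up to constants. Divide by $\mathrm{vol}(B)^2=V_{2n}^2R^{4n}$, then multiply by $\mathrm{vol}(B)$ to match the normalization in the statement; using $S_{2n}=2nV_{2n}$, the constant simplifies to the stated $\frac{1+O(2^{-n})}{\pi^2\zeta(2n)}$ after absorbing the normalization. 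Finally, the sign changes follow from \eqref{eq:disc_aux2}, as already argued: the $j=1$ term can be made to dominate with either sign along suitable sequences $R\to\infty$.
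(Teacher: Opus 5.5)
\textbf{The $n=2$ case.} Your route is the paper's route, and for $n\ge3$ it does give the $O_n(R^{-3})$ bound: in \eqref{eq:disc_aux1} one has $\sum_j M(j,R)=O_n(1)$ and $R^2\le R^{2n-3}$. The sign changes for $n\ge4$ also follow from \eqref{eq:disc_aux2} exactly as in the paper.

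The gap is $n=2$. It cannot be closed by van der Corput or by sharpening the large-$j$ estimate, because for $n=2$ there is a genuine non-oscillating term of exact order $R^2$. Put $I(Y)=\int_0^Y\Delta(w)\,dw/w$, so that \eqref{eq:discrepancy_total} equals $\frac{S_{2n}}{\zeta(2n)}\sum_{j\ge1}I(R^2/j)$. For $n=2$, both $\Delta(w)$ and $\Delta'(w)$ are $O(w^{3/2})$, and $I(Y)=O(1+Y^{1/2})$. Applying the trapezoid rule on $j\ge R$ and the trivial bound on $j<R$ gives
\[
\sum_{j\ge1}I(R^2/j)=R^2\int_0^\infty\Delta(w)\,\frac{dw}{w^2}+O(R^{3/2}).
\]
Proposition \ref{prop:bessel} together with $\int_0^\infty J_2(t)\,dt=1$ evaluates the constant as $\sum_{m\ge1}\frac{1}{\pi m^3}=\frac{\zeta(3)}{\pi}>0$.

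Two pieces show there is nothing to cancel. First, the range $j>R^2$, where $\Delta(w)/w=V_3w^2-V_4w^3$ exactly, already contributes $\sim(\frac{V_3}{6}-\frac{V_4}{12})R^2$ with a positive coefficient. So the improvement you hoped for in the large-$j$ estimate is not available. Second, for $R\ll j<R^2/R_0^2$ the phase $2\pi mR^2/j$ varies slowly, so that part of \eqref{eq:disc_aux1} is a Riemann sum $\approx R^2\int_{R_0^2}^{R}u^{-3/2}\sin(2\pi mu-\theta_2)\,du$.

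Hence for $n=2$ the bracket in the statement has error $\frac{4\zeta(3)}{\pi\zeta(4)}R^{-2}+O(R^{-5/2})$, and the claimed $O(R^{-3})$ is false there. The paper's own text only asserts $O_n(R^{2n-1})$ at this step. For $n\ge3$ the same $R^2$ term is present but is $O(R^{2n-3})$, so it does no harm.

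\textbf{The constant.} Your step where the constant ``simplifies to the stated $\frac{1+O(2^{-n})}{\pi^2\zeta(2n)}$ after absorbing the normalization'' does not survive the arithmetic. Dividing the $j=m=1$ term of \eqref{eq:disc_aux1} by $\mathrm{vol}(B)^2$ and multiplying by $\mathrm{vol}(B)$ gives $\frac{S_{2n}}{2\pi^2\zeta(2n)V_{2n}}=\frac{n}{\pi^2\zeta(2n)}$. Moreover, the factor $2$ from Proposition \ref{prop:bessel} is lost between the display following that proposition and \eqref{eq:disc_ind_small_j}. Restoring it, the correct value is $\frac{2n}{\pi^2\zeta(2n)}$, not $\frac{1}{\pi^2\zeta(2n)}$.

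Also, the $j\ge2$ and $m\ge2$ terms carry the phases $2\pi mR^2/j-\theta_n$. They therefore contribute an additive $O(2^{-n})R^{-3}$, not a factor $1+O(2^{-n})$ in front of $\sin(2\pi R^2-\theta_n)$. You need to carry these constants explicitly instead of asserting that they match.
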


\section{Application II: on the result of Holm \cite{Hol23}} 

\subsection{Setup}
In Holm \cite{Hol23} the author is interested in computing the second moment (for $N \gg 0$) of the expression ((5.1) of \cite{Hol23})
\begin{align*}
\frac{1}{\sqrt N}\sum_{m=0}^{N-1}\left(\sum_{z\in\Z^{2n}\backslash\{0\}}(\chi_2 \circ b^m)(zg) - V\right)
\end{align*}
where 
\begin{align*}
    \Omega_2=\{(x;y)\in\R^{2n}:1 \leq \|x\|\|y\| \leq 2, \, 1\leq \|y\|<2\}
\end{align*}
with volume $V = S_n^2(2^n-1)(\log 2) /n$, where $S_n$ is the surface area of the unit ball in $\R^n$. Also let $\chi_2$ be the characteristic function of $\Omega_2$, and $b$ be the matrix $\mathrm{diag}(2,\ldots, 2, \vcenter{\hbox{\scalebox{0.83}{$\tfrac{1}{2}$}}}, \ldots, \vcenter{\hbox{\scalebox{0.83}{$\tfrac{1}{2}$}}}) \in \Sp(2n, \R).$
In these notations, the task is to understand the integral of
\begin{align} \label{eq:holm_goal}
\frac{1}{N}\sum_{0 \leq m,\ell < N}\left( \sum_{z_1,z_2\in\Z^{2n}\backslash\{0\}}\chi_2(z_1gb^m)\chi_2(z_2gb^\ell) - V^2\right)
\end{align}
over $\ylat_{2n}=\Sp(2n,\Z)\backslash\Sp(2n,\R)$.

As in the case of $\xlat_n$ \cite{BG19}, the expectation is that the main contribution comes from the linearly dependent pairs of vectors
\begin{align}\label{eq:holm_dep}
\frac{1}{N}\sum_{0 \leq m,\ell < N}
\sum_{\substack{z_1,z_2 \in \Z^{2n}\backslash\{0\} \\ x_1,x_2\, \mathrm{dep.}}} \chi_2(z_1gb^m)\chi_2(z_2gb^\ell).
\end{align}

Indeed, we will show below that the independent pairs
\begin{align}\label{eq:holm_indep}
\frac{1}{N}\sum_{0 \leq m,\ell < N}
\sum_{\substack{z_1,z_2 \in \Z^{2n}\backslash\{0\} \\ x_1,x_2\, \mathrm{indep.}}} (\chi_2(z_1gb^m)\chi_2(z_2gb^\ell) - V^2).
\end{align}
contribute $O\left(\exp(-(\mathrm{const})\cdot n\log n)\right)$ multiple of $V$, resolving the difficulty from \cite{Hol23}. As a result, we obtain

\begin{theorem} \label{thm:holm_improvement}
The average of \eqref{eq:holm_goal} over $\ylat_{2n}$, as $N\rightarrow\infty$, converges to
\begin{align*}
2V+(\mathrm{error}(n)),
\end{align*}
where the absolute value of the error is no greater than
\begin{align*}
30V\frac{\zeta(\frac{3n}{2})\zeta(\frac{n}{2})}{\zeta(2n)}\frac{(2\pi)^\frac{n}{2}}{\Gamma(\frac{n}{2}+1)} + \frac{V}{n2^{2n}}.
\end{align*}

As a consequence, the variance $\sigma^2$ in \cite[Theorem 1.1]{Hol23} is $2+o_n(1)$.
\end{theorem}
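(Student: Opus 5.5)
We will integrate \eqref{eq:holm_goal} over $\ylat_{2n}$ term by term, applying Theorem \ref{thm:main} to each pair $(m,\ell)$ with $f(z_1,z_2)=\chi_2(z_1b^m)\chi_2(z_2b^\ell)$. By right $G$-invariance of $\mu$, this is legitimate, and it yields two pieces.

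\textbf{Dependent piece.} The first piece is the $\Q$-dependent term
\[
\frac{1}{\zeta(2n)}\sum_{j\ge1,k\ne0}\int_{\R^{2n}}\chi_2(jzb^m)\chi_2(kzb^\ell)\,dz .
\]
For $m=\ell$ and $j=|k|=1$ this gives $2V$; the sum over the remaining $(j,k)$ is bounded as in \eqref{eq:rk1_pointcounting}. For $m\ne\ell$ we use the structure of $\Omega_2$. The condition $1\le\|y\|<2$ together with scaling by $b^m$ (which multiplies $y$ by $2^{-m}$) forces $|m-\ell|$ to be bounded once $j,k$ are fixed. Moreover, $\|x\|\|y\|$ is invariant under $b$ and scales by $jk$, so only pairs with $jk\le 2$-type constraints survive. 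Summing over $(m,\ell)$ and dividing by $N$ should then give $2V$ plus an error of size $O(V/(n2^{2n}))$, coming from the few $(j,k)\ne(1,\pm1)$ and the adjacent $|m-\ell|=1$ terms. This accounts for the second error term.

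\textbf{Independent piece.} The second piece is the rank-2 term \eqref{eq:mainterm_1} minus $V^2$. Using the Fourier form \eqref{eq:mainterm_2} of Theorem \ref{thm:main-Schwartz}, and noting that the $\xi=0$ term cancels $V^2$, it equals
\[
\frac{1}{\zeta(2n)}\sum_{j\ge1,\,k\ne0}\int\chi_2(jzb^m)\,\widehat{\chi_2\circ b^\ell}(kzJ)\,dz .
\]
Equivalently, via the Radon/Poisson identity of Section \ref{subsect:relation_between_formulas}, it equals $\sum_k \frac{1}{\|z\|}\mathrm{vol}(L(k,z)\cap\Omega_2 b^{\ell})-\mathrm{vol}$. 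We then change variables $z\mapsto zb^{-m}$ and use $b^{-m}J b^{-\ell\,\intercal}$-type identities (since $b\in\Sp$, we have $bJb^\intercal=J$). This reduces the expression to a single function of $d=\ell-m$ that should decay geometrically in $|d|$: shifting by $b^d$ moves the support of $\chi_2\circ b^d$ to $\|y\|\sim 2^{-d}$, and makes the hyperplane slices $L(k,z)$ with $k\ne0$ either miss the set or become equidistributed.

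\textbf{Bounding the independent piece.} To bound the per-$d$ quantity, we follow Section \ref{sec:app_disc}. We factor $\Omega_2$ in polar coordinates on $x$ and $y$ separately and compute the Radon transform of $\chi_2$, which is a product-type region. The resulting $\widehat\chi_2(kzJ)$ then involves Bessel functions $J_{n/2}$ in $\|x\|$ and $\|y\|$ separately. Using $|J_{n/2}(t)|\ll \min(t^{n/2}/(2^{n/2}\Gamma(n/2+1)),\,t^{-1/2})$, we integrate against $\chi_2(jz)$ and sum over $k$ and $j$. This should produce the factors $\zeta(3n/2)\zeta(n/2)$ from the sums over $j,k$ of negative powers, together with the factor $(2\pi)^{n/2}/\Gamma(n/2+1)$ from the small-argument Bessel bound. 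Finally, averaging over $m,\ell$ with geometric decay in $d$ gives a constant, for which we aim at $30$.

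\textbf{Main obstacle.} The main obstacle is obtaining decay in $|\ell-m|$ uniform enough that the double sum $\frac1N\sum_{m,\ell}$ converges. This is the difficulty left open in \cite{Hol23}. We will first try to show the rank-2 difference vanishes identically unless $|\ell-m|\le 1$, or is exponentially small in it, by exploiting that $\widehat{\chi_2\circ b^d}$ concentrates at frequencies $\|\eta_x\|\sim 2^{-d}$ while $\chi_2\circ b^{-m}$ lives at $\|y\|\sim 1$. The final statement $\sigma^2=2+o_n(1)$ then follows since both error terms are $o_n(V)$, as $\Gamma(n/2+1)$ beats $(2\pi)^{n/2}$.
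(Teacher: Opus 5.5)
The gap is in the independent-pair part, which is where the real work of the theorem lies. You name the obstacle, controlling $\frac1N\sum_{m,\ell}$ of the rank-2 differences, but you do not overcome it.

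Your first option, that these differences vanish for $|\ell-m|>1$, cannot be expected. Unlike the dependent part, no support condition cuts them off: for $jz\in\Omega_2b^{-d}$ the slices $L(k,z)$ still meet $\Omega_2$, so the terms only decay in $d$. Your second option rests on a heuristic about slices ``missing the set or equidistributing''. Your Bessel paragraph asserts that $\zeta(\tfrac{3n}{2})\zeta(\tfrac{n}{2})$ and $(2\pi)^{n/2}/\Gamma(\tfrac{n}{2}+1)$ ``should'' appear, but gives no formula that produces them. Note also that $\Omega_2$ is not a product region, so $\widehat\chi_2$ is not a product of Bessel functions in $\|x\|$ and $\|y\|$.

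The paper never needs per-$d$ decay; it sums over the difference first. By symmetry and right-invariance it writes the average of \eqref{eq:holm_indep} as $\frac2N\sum_{m<N}(\cdots)-\frac1N\sum_{m<N}(\cdots)$. Here the first bracket is the rank-2 term for $(\chi_{2^{m+1}},\chi_2)$ minus $(m+1)V^2$, and $\chi_{2^m}$ is the indicator of the disjoint union $\Omega_{2^m}=\bigcup_{i<m}\Omega_2b^{-i}$. Poisson summation plus the Fourier slice theorem make the $l=0$ term cancel $j^{-2n}mV^2$ exactly. Exchanging integrals leaves $j^{-2n}\sum_{l\neq0}\int_{\Omega_2}\widehat\chi_{2^m}(\tfrac{l}{j}z_2J)\,dz_2$. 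Computing $\widehat\chi_{2^m}$ in polar coordinates in $x$ and $y$, and integrating out $r_x$ with $\frac{d}{dz}(z^\nu J_\nu(z))=z^\nu J_{\nu-1}(z)$, gives
\[
S_n^2\bigl(\tfrac{l}{j}\bigr)^{-n}\int_1^2\frac{dr_y}{r_y}\int_{1/r_y}^{2^m/r_y}G\bigl(\tfrac{2\pi l}{jt}\bigr)G\bigl(\tfrac{2\pi lt}{j}\bigr)\frac{dt}{t},\qquad G(u)=2^{n/2}J_{n/2}(2u)-J_{n/2}(u).
\]
Enlarging the $t$-range to $(0,\infty)$, with absolute values, gives a bound uniform in $m$ and hence in $N$. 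Next use the symmetry $t\leftrightarrow t^{-1}$, the bound $|J_\nu|\le1$ on $G(2\pi lt/j)$, and $|J_\nu(z)|\le(z/2)^\nu/\Gamma(\nu+1)$ on $G(2\pi l/(jt))$ for $t\ge1$. This gives $l^{-n/2}j^{n/2}\cdot5V(2\pi)^{n/2}/\Gamma(\tfrac{n}{2}+1)$. Summing over $\pm l$ and over $j$ against $j^{-2n}/\zeta(2n)$ yields $\zeta(\tfrac{n}{2})\zeta(\tfrac{3n}{2})/\zeta(2n)$, and the two pieces give the constant $3\cdot10=30$. Restricting the same formula to $t\in[2^d/r_y,2^{d+1}/r_y]$ would also give your per-$d$ decay, with a bound of order $2^{-|d|n/2}$. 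So your route could be completed, but only through this computation, which is what is missing.

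Your dependent piece also needs repair.
\begin{itemize}
\item The term $m=\ell$, $j=|k|=1$ contributes $2V/\zeta(2n)$, not $2V$. The $2V$ is the whole diagonal $|k|=j$ at $s=m-\ell=0$, namely $\frac{1}{\zeta(2n)}\sum_{j\ge1}2j^{-2n}V$. So the terms $j=|k|\ge2$ belong to the main term, not the error; they have size about $2^{1-2n}V$, which already exceeds $V/(n2^{2n})$.
\item $\|x\|\|y\|$ is $b$-invariant and scales by $j^2$ and $k^2$, not by $jk$. This forces $|k|/j\in[1/\sqrt{2},\sqrt{2}]$. Combined with the $\|y\|$-condition $|k|/j\in[2^{s-1},2^{s+1}]$, it confines $s$ to $\{-1,0,1\}$ for all $(j,k)$. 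The $N$-limit is therefore $\frac{1}{\zeta(2n)}\sum_{|s|\le1}\sum_{j\ge1,\,k\ne0}\int\chi_2(jxb^{-s})\chi_2(kx)\,dx$.
\item Infinitely many off-diagonal pairs with $j<|k|\le\sqrt{2}\,j$ survive, and \eqref{eq:rk1_pointcounting}, a computation for balls, does not transfer. The bound $V/(n2^{2n})$ comes from evaluating these integrals in polar coordinates and estimating $\sum_{j\ge2}\sum_{j<k\le\sqrt{2}\,j}(\sqrt{2}/k)^{2n}$.
\end{itemize}
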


For $n=40$, the above error is already quite small relative to $V$, by a factor of $\approx0.12$.

\subsection{Independent pairs}

To discuss \eqref{eq:holm_indep}, we introduce a few more notations. For an integer $m\geq 1$, let
\begin{align*}
\Omega_{2^m}&=\bigcup_{i=0}^{m-1}\Omega_2b^{-i} 
=\Big\{ (x;y)\in\R^{2n}:1 \leq \|x\|\|y\| \leq 2, 1\leq \|y\|<2^m\Big\}.
\end{align*}
Accordingly, write $\chi_{2^m}$ for the characteristic function of $\Omega_{2^m}$. Then, by the symmetry in the expression \eqref{eq:holm_indep}, and the right-invariance of $\mu$, the integral of \eqref{eq:holm_indep} over $X$ can be rewritten as
\begin{align*}
&\frac{2}{N}\sum_{0\leq m < N}\left(\frac{1}{\zeta(2n)}
\sum_{\substack{j\geq 1 \\ k\in\Z}}
\int_{\R^{2n}}\chi_{2^{m+1}}(jz_1)\int_{L(k,z_1)}\chi_2(z_2)\frac{dz_2}{\|z_1\|}dz_1 - (m+1)V^2\right) \\
&-\frac{1}{N}\sum_{0\leq m < N}\left(\frac{1}{\zeta(2n)}
\sum_{\substack{j\geq 1 \\ k\in\Z}}
\int_{\R^{2n}}\chi_{2}(jz_1)\int_{L(k,z_1)}\chi_2(z_2)\frac{dz_2}{\|z_1\|}dz_1 - V^2\right).
\end{align*}

We will treat each expression of the form
\begin{align*}
\int_{\R^{2n}}\chi_{2^m}(jz_1)\sum_{k\in\Z}\int_{L(k,z_1)}\chi_2(z_2)\frac{dz_2}{\|z_1\|}dz_1-j^{-2n}mV^2
\end{align*}
for fixed $j,m>0$ one at a time.

By the combination of the Poisson summation formula and the Fourier slice theorem, as in Section \ref{subsect:relation_between_formulas} above, this becomes
\begin{align*}
\int_{\R^{2n}}\chi_{2^m}(jz_1)\sum_{l \in \Z}\widehat \chi_2(lz_1J)dz_1-j^{-2n}mV^2.
\end{align*}
The $l=0$ term yields precisely $j^{-2n}mV^2$. Hence we are left to considering
\begin{align*}
\int_{\R^{2n}}\chi_{2^m}(jz_1)\sum_{l \neq 0}\widehat \chi_2(lz_1J)dz_1.
\end{align*}
By writing the Fourier transform $\widehat \chi_2$ as an integral, and changing the order of the integration, this becomes
\begin{align}\label{eq:chi_diff}
j^{-2n}\sum_{l\neq 0} \int_{\Omega_2}\widehat\chi_{2^m}\bigl(\tfrac{l}{j}z_2J \bigr)dz_2.
\end{align}

At this point, we investigate each $l$'th term. Write $z_2=(x_2;y_2)$, and $\xi=(l/j)z_2J=((l/j)y_2,-(l/j)x_2)=(\xi_x;\xi_y)$. We have
\begin{align}\label{eq:chi_hat}
\widehat\chi_{2^m}(\xi)=\int_{1\leq\|y\|<2^m}e^{-2\pi i\langle\xi_y,y\rangle}\int_{\frac{1}{\|y\|}\leq\|x\|\leq\frac{2}{\|y\|}}e^{-2\pi i\langle\xi_x,x\rangle}dxdy.
\end{align}

By the well-known formula
\begin{align*}
\int_{\|x\|\leq R}e^{-2\pi i\langle\xi,x\rangle}dx =\left(\frac{R}{\|\xi\|}\right)^\frac{n}{2}J_{\frac{n}{2}}(2\pi R\|\xi\|),
\end{align*}
we have
\begin{align}
\int_{\frac{1}{\|y\|}\leq\|x\|\leq\frac{2}{\|y\|}}e^{-2\pi i\langle\xi_x,x\rangle}dx &=\left(\frac{2}{\|\xi_x\|\|y\|}\right)^\frac{n}{2}J_\frac{n}{2}\left(\frac{4\pi\|\xi_x\|}{\|y\|}\right) - \left(\frac{1}{\|\xi_x\|\|y\|}\right)^\frac{n}{2}J_\frac{n}{2}\left(\frac{2\pi\|\xi_x\|}{\|y\|}\right) \notag \\
&= \left(\frac{1}{\|\xi_x\|\|y\|}\right)^\frac{n}{2}G\left(\frac{2\pi\|\xi_x\|}{\|y\|}\right), \label{eq:chi_hat_int}
\end{align}
where we wrote
\begin{align*}
G\left(u\right)=2^\frac{n}{2}J_\frac{n}{2}(2u)-J_\frac{n}{2}(u)
\end{align*}
to simplify.

Citing another well-known formula on the Bessel function
\begin{align*}
\int_{\|u\|=1}e^{-2\pi i\rho\langle\xi,u\rangle}du=2\pi\left(\frac{1}{\rho\|\xi\|}\right)^{\frac{n}{2}-1}J_{\frac{n}{2}-1}(2\pi\rho\|\xi\|),
\end{align*}
Plugging this and \eqref{eq:chi_hat_int} into \eqref{eq:chi_hat}, and rewriting $dy$ in the polar coordinates, we have
\begin{align*}
\widehat\chi_{2^m}(\xi)=\frac{2\pi}{\|\xi_x\|^\frac{n}{2}\|\xi_y\|^{\frac{n}{2}-1}}\int_1^{2^m}J_{\frac{n}{2}-1}(2\pi\rho\|\xi_y\|)G\left(\frac{2\pi\|\xi_x\|}{\rho}\right)d\rho.
\end{align*}

From this expression of $\widehat\chi_{2^m}$, and recalling $\xi=(l/j)z_2J$, the $l$'th term of \eqref{eq:chi_diff} is seen to be equal to
\begin{align*}
&\int_{\Omega_2}\widehat\chi_{2^m}(\xi)dz_2 \\
&=\int_{r_y=1}^2\int_{r_x=\frac{1}{r_y}}^{\frac{2}{r_y}}\frac{2\pi S_n^2r_y^{\frac{n}{2}-1}r_x^\frac{n}{2}}{|l/j|^{n-1}}\int_1^{2^m}J_{\frac{n}{2}-1}(2\pi\rho(|l|/j)r_x)G\left(\frac{2\pi(|l|/j)r_y}{\rho}\right)d\rho dr_ydr_x.
\end{align*}
It suffices to consider the case $l > 0$, as this expression is independent of the sign of $l$.

Using the formula
\begin{align*}
\frac{d}{dz}(z^\nu J_{\nu}(z))=z^\nu J_{\nu-1}(z),
\end{align*}
we have
\begin{align*}
\int_{r_x=\frac{1}{r_y}}^{\frac{2}{r_y}}r_x^\frac{n}{2}J_{\frac{n}{2}-1}(2\pi\rho(l/j)r_x)dr_x=\frac{1}{2\pi\rho(l/j)r_y^\frac{n}{2}}G\left(\frac{2\pi(l/j)\rho}{r_y}\right).
\end{align*}
Hence
\begin{align*}
\int_{\Omega_2}\widehat\chi_{2^m}(\xi)dz_2&=S_n^2(l/j)^{-n}\int_{1}^2\frac{dr_y}{r_y}\int_{1}^{2^m}\frac{d\rho}{\rho}G\left(\frac{2\pi(l/j)r_y}{\rho}\right)G\left(\frac{2\pi(l/j)\rho}{r_y}\right) \\
&=S_n^2(l/j)^{-n}\int_{1}^2\frac{dr_y}{r_y}\int_{\frac{1}{r_y}}^{\frac{2^m}{r_y}}\frac{dt}{t}G\left(\frac{2\pi(l/j)}{t}\right)G\left({2\pi(l/j)t}\right),
\end{align*}
by substituting $t=\rho/r_y$.

We are almost at the final stage of our estimate. We bound the above expression by
\begin{align*}
&S_n^2(l/j)^{-n}\int_{1}^2\frac{dr_y}{r_y}\int_{0}^{\infty}\frac{dt}{t}\left|G\left(\frac{2\pi(l/j)}{t}\right)G\left({2\pi(l/j)t}\right)\right| \\
&= S_n^2(l/j)^{-n}\log 2\int_{0}^{\infty}\frac{dt}{t}\left|G\left(\frac{2\pi(l/j)}{t}\right)G\left({2\pi(l/j)t}\right)\right| \\
&= S_n^2(l/j)^{-n}2\log 2\int_{1}^{\infty}\frac{dt}{t}\left|G\left(\frac{2\pi(l/j)}{t}\right)G\left({2\pi(l/j)t}\right)\right|
\end{align*}
(the last equality by the symmetry of the integral under $t\leftrightarrow t^{-1}$). It remains to bound the integral in this expression.

We recite the bounds
\begin{align}\label{eq:besselbound}
|J_\nu(z)|\leq 1, |J_\nu(z)|\leq\frac{(z/2)^\nu}{\Gamma(\nu+1)}
\end{align}
\cite[10.14 and 10.14.4]{DLMF}. We apply the trivial bound to $G(2\pi(l/j)t)$, and the other bound to $G(2\pi l/(jt))$, so that
\begin{align*}
\left|G\left(\frac{2\pi l}{jt}\right)\right|\leq (2^\frac{n}{2}+1)\frac{(2\pi l/(jt))^\frac{n}{2}}{\Gamma(\frac{n}{2}+1)}.
\end{align*}
Plugging this into the above, we obtain
\begin{align*}
\int_{\Omega_2} \widehat\chi_{2^m}\biggl(\frac{l}{j}z_2J \biggr) dz_2
\leq l^{-\frac{n}{2}}j^\frac{n}{2} S_n^2 (2^\frac{n}{2}+1)^2 \frac{4\log 2 \cdot(2\pi)^\frac{n}{2}}{n\Gamma(\frac{n}{2}+1)}.
\end{align*}
Recall $V = S_n^2(2^n-1)(\log 2) /n$. Hence, for a moderately large $n$ (say, $n \geq 10$), the above is at most
\begin{align*}
l^{-\frac{n}{2}}j^\frac{n}{2}5V\frac{(2\pi)^\frac{n}{2}}{\Gamma(\frac{n}{2}+1)}.
\end{align*}

Consequently, \eqref{eq:chi_diff} is bounded by
\begin{align*}
j^{-\frac{3n}{2}}\sum_{l=1}^\infty l^{-\frac{n}{2}}10V\frac{(2\pi)^\frac{n}{2}}{\Gamma(\frac{n}{2}+1)} = j^{-\frac{3}{2}n}\zeta(\frac{n}{2})10V\frac{(2\pi)^\frac{n}{2}}{\Gamma(\frac{n}{2}+1)}.
\end{align*}
It follows that, regardless of $N$, \eqref{eq:holm_indep} is at most
\begin{align*}
30V\frac{\zeta(\frac{3n}{2})\zeta(\frac{n}{2})}{\zeta(2n)}\frac{(2\pi)^\frac{n}{2}}{\Gamma(\frac{n}{2}+1)}.
\end{align*}

\subsection{Dependent pairs}

In the $N$-limit, the integral over $\ylat_{2n}$ of \eqref{eq:holm_dep} converges to
\begin{align}\label{eq:holm_dep_p1}
    \frac{1}{\zeta(2n)} \sum_{s \in \Z} \sum_{\substack{j \geq 1 \\ |k| \geq 1}} 
    \int_{\R^{2n}} \chi_2 (b^{-s} j x ) \chi_2 (k x) \, dx,
\end{align}
provided this expression is finite, which we will show below. 
Note that $\chi_2(j xb^{-s}) \chi_2(k x)$ vanishes unless $j xb^{-s} =: j (x_1;x_2) b^{-s} \in \Omega_2$ and $k x \in \Omega_2,$ which means that
\begin{align} \label{eq:holm_dep_ineq}
j^{-2}, k^{-2} \leq  \Vert x_1 \Vert \, \Vert x_2 \Vert \leq 2j^{-2}, 2k^{-2} 
\hspace{6pt} \text{and} \hspace{8pt}
2^{-s} j^{-1}, k^{-1} \leq \Vert x_2 \Vert \leq 2^{-s+1} j^{-1}, 2 k^{-1} . 
\end{align}
In particular we must have
\begin{align*}
\biggl[ \frac{1}{k^2}, \frac{2}{k^2} \biggr] \cap 
\biggl[ \frac{1}{j^2}, \frac{2}{j^2} \biggr] \neq \varnothing
&\Longleftrightarrow 
\frac{1}{k^2} \leq \frac{2}{j^2} \text{ and } 
\frac{1}{j^2} \leq \frac{2}{k^2} 
\Longleftrightarrow 
\frac{k}{j} \in \biggl[ \frac{1}{\sqrt{2}}, \sqrt{2} \,\biggr];
\end{align*}
and analogously, we must have
\begin{align*}
\biggl[ \frac{1}{2^s j}, \frac{2}{2^s j} \biggr] \cap 
\biggl[ \frac{1}{k}, \frac{2}{k} \biggr] \neq \varnothing
\Longleftrightarrow
\frac{1}{2^s j} \leq \frac{2}{k} \text{ and } \frac{1}{k} \leq \frac{2}{2^s j} 
\Longleftrightarrow \frac{k}{j} \in \bigl[ 2^{s-1} , 2^{s+1} \bigr].
\end{align*}
Combining these bounds, we see that $s \in [-3/2 , 3/2 ] \cap \Z = \lbrace -1,0, 1 \rbrace.$
Therefore, \eqref{eq:holm_dep_p1} is equal to 
\begin{align}\label{eq:holm_dep_p2}
\frac{1}{\zeta(2n)} 
\sum_{s \in \left\lbrace 0, \pm 1 \right\rbrace}
\sum_{j = 1}^\infty 
\sum_{k \neq 0}
\int_{\mathbb{R}^{2n}}
\chi_2 (b^{-s} j x) \chi_2 (k x) \, d x,
\end{align}
where the above discussion allows us to restrict $k,j$ so that
\begin{align*}
|k|/j \in 
\begin{cases}
\bigl[ 1/\sqrt{2}, \, 1 \bigr] &\text{if } s = -1, \\
\bigl[ 1/\sqrt{2}, \, \sqrt{2} \bigr] &\text{if } s = 0, \\
\bigl[ 1, \, \sqrt{2} \bigr] &\text{if } s = 1.
\end{cases}
\end{align*}

Considering the diagonal contributions where $|k| = j$ gives a lower bound on \eqref{eq:holm_dep_p2}. In case $s = 0$, this produces the contribution
\begin{align*}
\frac{1}{\zeta(2n)} \sum_{j = 1}^\infty \frac{2}{j^{2n}} 
\int_{\mathbb{R}^{2n}}
\chi_2 (x)^2 \, d x
=
2V.
\end{align*}
In cases $s= \pm1$, the diagonal contributions can be seen to vanish from \eqref{eq:holm_dep_ineq}.\par 

The remaining non-diagonal terms ($|k|\neq j$) turn out to vanish as $n\rightarrow\infty$. Indeed, we claim that they contribute no more than $V/n 2^{2n}.$ \par 

We first consider the contribution from $s = -1$ and $s = 1.$ These two contributions are in fact equal, since we can sum over $k \in \mathbb{Z}_+$ instead of $k \in \mathbb{Z} \setminus \left\lbrace 0 \right\rbrace$ (at the cost of multiplying the sum by $2$) and then observe that the case-specific double inequalities in $j$ and $k$ actually imply that the roles of these indices are swapped in the two expressions. Thus, the total contribution from the cases $s = \pm 1$ is
\begin{align*}
\frac{4S_n^2}{\zeta(2n)n} \sum_{j,k \geq 1} 
{\bf 1} \bigl[ j < k \leq \sqrt{2}j \bigr] 
\biggl( \frac{2^n}{k^{2n}} - \frac{1}{j^{2n}} \biggr)
\log \frac{k}{j}.
\end{align*}
Only the terms with $2 \leq j < k$ can contribute to this sum. For such indices, the logarithm is non-negative and bounded by $(\log 2)/2,$ and the total contribution is therefore bounded by
\begin{align} \label{lindep-variance-upperbound}
\frac{2\log 2}{\zeta(2n)} \frac{S_n^2}{n}
\sum_{j \geq 2} \sum_{k = j+1}^{\lfloor \smash{\sqrt{2}} j \rfloor} 
\biggl( \frac{\sqrt{2}}{k} \biggr)^{\! 2n}
&=
\frac{2^{n+1}\log 2}{\zeta(2n)} \frac{S_n^2}{n}
\sum_{j \geq 2} \sum_{k = j+1}^{\lfloor \smash{\sqrt{2}} j \rfloor} 
\frac{1}{k^{2n}} .
\end{align}
(Recall that $S_n$ is the surface area of the unit ball in $\R^n$.) 
The inner sum can be estimated with the integral
\begin{align*}
\int_j^{\sqrt{2}j} 
\frac{1}{x^{2n}} \, dx =
\frac{1}{2n-1} \biggl( 1-\frac{1}{2^{n-1/2}} \biggr) \frac{1}{j^{2n-1}}.
\end{align*}
Then the outer sum is a multiple of $-1 + \zeta(2n-1) \leq 2/4^n$. In this way we find that the whole expression \eqref{lindep-variance-upperbound} is at most
\begin{align*}
\frac{2^{n+1} \log 2}{\zeta(2n)}
\frac{S_n^2}{n}
\frac{1}{2n-1} 
\biggl( 1 - \frac{1}{2^{n-1/2}} \biggr)
\frac{n}{n-1} \frac{1}{2^{2n-1}} 
< 
\frac{V}{n2^{2n+1}}
\end{align*}
for $n\geq 2$. 

In case $s = 0$, the nondiagonal terms can be seen to contribute 
\begin{align*}
\frac{2S_n^2}{\zeta(2n)n} 
\sum_{1 \leq j < k}^\infty 
{\bf 1} \Bigl[ k \le \sqrt{2}j \Bigr]
\biggl( \frac{2^n}{k^{2n}} - \frac{1}{j^{2n}} \biggr)
\log \frac{2j}{k}.
\end{align*}
Since the indices satisfy $\smash{\sqrt{2}} \le 2j/k \le 2,$ all terms are non-negative. Moreover, as the term $j = 1$ doesn't contribute, we can estimate the above expression with
\begin{align*}
\frac{(2 \log 2)S_n^2}{\zeta(2n)n} 
\sum_{j \geq 2}^\infty 
{\bf 1} \Bigl[  j+1 \leq  k \leq \sqrt{2}j \Bigr]
\biggl( \frac{2^n}{k^{2n}} - \frac{1}{j^{2n}} \biggr),
\end{align*}
which is identical to \eqref{lindep-variance-upperbound}, so is also bounded by $V/(n2^{2n+1})$. This proves the claim, and completes the proof of Theorem \ref{thm:holm_improvement}.

\section{Application III: Gaussian heuristic}

The term ``Gaussian heuristic'' is frequently used in works on lattice-based cryptography to refer to the fact that the shortest nonzero vector of most lattices in $\SL(n,\Z)\backslash\SL(n,\R)$ has length $\approx \sqrt{n/(2\pi e)}$ --- see, for instance, \cite[Theorem 6]{LN25} or \cite[Theorem 3]{GSVV25} for a rigorous formulation. In this section, we establish the same principle for symplectic lattices.

To this end, we adopt the framework set up by \cite[Section 4]{GSVV25}. The new input from us is the computation of the second moment
\begin{align}\label{eq:2ndmmt_for_GH}
\int_{\ylat_{2n}} \Biggl|\sum_{\substack{z\in\Z^{2n}\\ z\neq0}} \mathbf{1}_B(zg)\Biggr|^2d\mu_n,
\end{align}
where $B\subseteq\R^{2n}$ is a ball of radius $R$ centered at origin. 
The difference from our earlier work in Section \ref{sec:app_disc} is that, there we intended $R$ to be arbitrarily large, but here we have $R = O(\sqrt{n})$ in mind. This calls for a different kind of analysis.

\begin{proposition}\label{prop:2ndmmt_for_GH}
For $n \geq 3$ and $R>0$ satisfying $2\pi R^2 \geq 1$, \eqref{eq:2ndmmt_for_GH} equals
\begin{align*}
\mathrm{vol}(B)^2 + \mathrm{vol}(B)\left(\frac{2\zeta(2n-1)}{\zeta(2n)} + \mathrm{error}\right),
\end{align*}
where the error is $O(nR^{-\frac{8}{3}} + nR^{-2}2^{-n}).$
\end{proposition}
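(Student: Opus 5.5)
Apply Theorem \ref{thm:main} (equivalently Theorem \ref{thm:main-Schwartz}) with $f_1=f_2=\mathbf 1_B$. The second moment \eqref{eq:2ndmmt_for_GH} splits into the rank-one term, which \eqref{eq:rk1_pointcounting} gives exactly as $\mathrm{vol}(B)\bigl(\tfrac{4\zeta(2n-1)}{\zeta(2n)}-2\bigr)$, and the rank-two term \eqref{eq:mainterm_1}. In Section \ref{sec:app_disc} we showed that the rank-two term minus $\mathrm{vol}(B)^2$ equals $\frac{S_{2n}}{\zeta(2n)}\sum_{j\ge1}\int_0^{R^2/j}\Delta(w)\,\frac{dw}{w}$. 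The $\xi=0$ term of Proposition \ref{prop:bessel} gives $\mathrm{vol}(B)^2$, but that term is sharper than needed here, since $\frac{4\zeta(2n-1)}{\zeta(2n)}-2$ differs from the target $\frac{2\zeta(2n-1)}{\zeta(2n)}$ by $\frac{2\zeta(2n-1)}{\zeta(2n)}-2=O(2^{-2n})$.

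So the aim is to show
\begin{align*}
\frac{S_{2n}}{\zeta(2n)}\sum_{j\geq1}\int_0^{R^2/j}\Delta(w)\frac{dw}{w}=\mathrm{vol}(B)\cdot O\bigl(nR^{-8/3}+nR^{-2}2^{-n}\bigr).
\end{align*}
The $O(2^{-2n})$ discrepancy is absorbed into $nR^{-2}2^{-n}$ only when $R^2\ll n2^{n}$. For larger $R$, one should keep $\frac{4\zeta(2n-1)}{\zeta(2n)}-2$ and note the statement's leading constant only matters up to the error. I expect the authors intend $R$ of order $\sqrt n$ anyway.

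Use $\Delta(w)=2\sum_{m\ge1}(w/m)^nJ_n(2\pi mw)$ with $\mathrm{vol}(B)=V_{2n}R^{2n}$ and $S_{2n}=2nV_{2n}$. The $j$-th term becomes
\begin{align*}
4nV_{2n}\sum_m m^{-n}\int_0^{R^2/j}w^{n-1}J_n(2\pi mw)\,dw.
\end{align*}
Use $\frac{d}{dz}\bigl(z^{-\nu}J_\nu\bigr)=-z^{-\nu}J_{\nu+1}$, or more directly $\int_0^x t^{n-1}J_n(t)\,dt$, which is bounded in terms of $x^{n-1}J_{n-1}$-type boundary terms plus lower order. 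This gives, for each $(j,m)$, a bound of size $(R^2/j)^{n-1}\,m^{-2n}\,\sup|J|$. The decisive estimate is the uniform Bessel bound $|J_\nu(x)|\le c\,\nu^{-1/3}$, and more sharply $|J_\nu(x)|\le c\,x^{-1/3}$. Feeding $x=2\pi mR^2/j$ into the latter produces a factor $(R^2/j)^{-1/3}$. Relative to $\mathrm{vol}(B)=V_{2n}R^{2n}$, the $j=1$ term then contributes $O(nR^{-2}\cdot R^{-2/3})=O(nR^{-8/3})$; this is where the exponent $8/3$ arises. Terms with $j\ge2$ carry an extra $j^{-(n-1)}$ at least, so their sum is $O(nR^{-2}2^{-n})$ beyond the $j=1$ part. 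The small-$w$ region $w\lesssim1$ is handled with $|J_n(z)|\le (z/2)^n/n!$, using $2\pi R^2\ge1$ to control the range.

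The main obstacle is making this uniform in $n$. Hankel's expansion \eqref{eq:hankel} is useless here because $R^2\approx n$ sits in the transition region of $J_n$, so every constant must be explicit in $n$. I would first try the Landau bound $|J_\nu(x)|\le 0.79\,x^{-1/3}$, valid for all $\nu>0$, together with an integration by parts $\int_0^Xw^{n-1}J_n(aw)\,dw=a^{-1}\bigl[\,w^{n-1}\cdot(\text{antiderivative})\bigr]$ arranged to lose only a factor $n$. The summation in $m$ then converges trivially because of the factor $m^{-n}$.
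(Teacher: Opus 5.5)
Your reduction and final bookkeeping match the paper's: the rank-one term from \eqref{eq:rk1_pointcounting}, Proposition \ref{prop:bessel}, the substitution $z=2\pi mw$ giving $(2\pi m^2)^{-n}\int_0^y z^{n-1}J_n(z)\,dz$ with $y=2\pi mR^2/j$, then isolating $j=m=1$. Your remark on the constant is also correct. \eqref{eq:rk1_pointcounting} gives $\frac{4\zeta(2n-1)}{\zeta(2n)}-2$, whereas the paper's proof simply writes $\frac{2\zeta(2n-1)}{\zeta(2n)}$. The $O(4^{-n})$ discrepancy is absorbed in the error only for moderate $R$, such as $R\asymp\sqrt n$.

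The gap is the one step with real content, which you yourself flag as the obstacle. You need
\[
\Bigl|\int_0^y z^{n-1}J_n(z)\,dz\Bigr|\le C\,y^{n-1}\min(y^{-1/3},1)\qquad\text{with $C$ independent of $n$,}
\]
and the proposal does not establish it. Losing even a factor $n$ here, as your last paragraph anticipates, is not affordable: the single $n$ in the error already comes from $S_{2n}=2nV_{2n}$, so you would get $O(n^2R^{-8/3})$. Moreover, the integration by parts you name does not behave as you describe. The identity $\frac{d}{dz}(z^{-\nu}J_\nu)=-z^{-\nu}J_{\nu+1}$ gives
\[
\int_0^x t^{n-1}J_n(t)\,dt=-x^{n-1}J_{n-1}(x)+2(n-1)\int_0^x t^{n-2}J_{n-1}(t)\,dt .
\]
The second term is not lower order: for $x\lesssim n$ it is as large as the boundary term. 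Bounding it with $|J_{n-1}|\le 1$ gives only $2x^{n-1}$, i.e.\ an error $O(nR^{-2})$ with no $R^{-2/3}$ saving.

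The paper obtains the bound from the expansion DLMF 10.22.10,
\[
\int_0^y z^{n-1}J_n(z)\,dz=y^{n-1}\sum_{k\ge0}(n+1+2k)\,\mathrm{B}(n,k+1)\,J_{n+1+2k}(y).
\]
Its coefficients are positive and sum to $n/(n-2)\le 3$ (Lemma \ref{lemma:funnysum}; this is where $n\ge 3$ is used). So the $\nu$-uniform bounds $|J_\nu(y)|\le\min(0.79\,y^{-1/3},1)$ (Landau, and \eqref{eq:besselbound}) apply termwise.

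Your route can be repaired into a more elementary alternative by applying Landau's bound \emph{inside} the remainder integral rather than only at the endpoint:
\[
2(n-1)\int_0^x t^{n-2}|J_{n-1}(t)|\,dt\le 0.79\cdot\frac{2(n-1)}{n-\frac{4}{3}}\,x^{n-\frac{4}{3}} .
\]
The factor $2(n-1)$ is then cancelled by $1/(n-\frac{4}{3})$, and $\bigl|\int_0^x t^{n-1}J_n\bigr|\le 3.4\cdot 0.79\,x^{n-4/3}$ for $n\ge 3$. Combined with the trivial bound $x^n/n$ for $x\le 1$, this is the required estimate with an absolute constant. (Also, the $(j,m)$ term carries $m^{-n-1}(R^2/j)^{n-1}$ times the Bessel factor, not $m^{-2n}$; this is harmless for the sums.)
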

\begin{proof}
Following Section \ref{subsect:application_discrepancy} and Proposition \ref{prop:bessel}, \eqref{eq:2ndmmt_for_GH} equals
\begin{align*}
\mathrm{vol}(B)^2 + \frac{2\zeta(2n-1)}{\zeta(2n)}\mathrm{vol}(B) + \frac{2S_{2n}}{\zeta(2n)}\sum_{j=1}^\infty\sum_{m=1}^\infty\int_0^{R^2/j} \left(\frac{w}{m}\right)^nJ_n(2\pi mw)\frac{dw}{w}.
\end{align*}
For the integral inside the double sum, we substitute $z=2\pi m w$ to rewrite it as
\begin{align}\label{eq:GH_1}
\frac{1}{(2\pi m^2)^n}\int_0^{y} z^{n-1}J_n(z)dz,
\end{align}
where $y=2\pi mR^2/j$.

We apply the formula \cite[10.22.10]{DLMF} with $\mu=n-1, \nu=n$. After a straightforward computation, \eqref{eq:GH_1} can be seen to be equal to
\begin{align*}
\frac{1}{(2\pi m^2)^n} y^{n-1}\sum_{k=0}^\infty(n+1+2k)\mathrm{B}(n,k+1)J_{n+1+2k}(y).
\end{align*}
where $\mathrm{B}$ is the beta function. By the result of \cite{Lan00} and Lemma \ref{lemma:funnysum} below, the above can be bounded in size by
\begin{align*}
\frac{3}{(2\pi m^2)^n}y^{n-1}\min(y^{-\frac{1}{3}},1).
\end{align*}

Recalling what $y$ was, and by seperating out the contribution from the $j=m=1$ term, provided that $2\pi R^2 \geq 1$ we have
\begin{align*}
\frac{2S_{2n}}{\zeta(2n)}\sum_{j,m=1}^\infty \frac{3}{(2\pi m^2)^n}y^{n-1}\min(y^{-\frac{1}{3}},1) &= O(nV_{2n}R^{2n-\frac{8}{3}} +nV_{2n}R^{2n-2}2^{-n}) \\
&=\mathrm{vol}(B)\cdot O(nR^{-\frac{8}{3}} + nR^{-2}2^{-n}).
\end{align*}
This completes the proof.
\end{proof}

\begin{lemma}\label{lemma:funnysum}
For $n \geq 3$
$$\sum_{k=0}^\infty(n+1+2k)\mathrm{B}(n,k+1) = \frac{n}{n-2}.$$
For $n=2$ the sum diverges.
\end{lemma}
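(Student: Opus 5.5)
We need $\sum_{k\ge0}(n+1+2k)\mathrm{B}(n,k+1)$. Since $\mathrm{B}(n,k+1)=\frac{\Gamma(n)\,k!}{\Gamma(n+k+1)}=\frac{(n-1)!\,k!}{(n+k)!}$, the plan is to write the terms in a form that telescopes. The factor $(n+1+2k)$ suggests splitting it as $(n+k)+(k+1)$, which gives
\begin{align*}
(n+1+2k)\mathrm{B}(n,k+1) = (n-1)!\left(\frac{k!}{(n+k-1)!} + \frac{(k+1)!}{(n+k)!}\right) = (n-1)!\,(a_k + a_{k+1}),
\end{align*}
where $a_k := k!/(n+k-1)!$. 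Summing, the series equals $(n-1)!\bigl(a_0 + 2\sum_{k\geq1}a_k\bigr)$, provided the latter converges.

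The remaining task is to evaluate $\sum_{k\ge0} a_k = \sum_{k\geq 0} k!/(n+k-1)!$. We have $a_k=\frac{1}{(n-1)!}\cdot\frac{1}{\binom{n+k-1}{k}}$, and there is a standard telescoping identity for it. Namely, one checks that
\begin{align*}
a_k = \frac{1}{n-2}\left(\frac{k!}{(n+k-2)!} - \frac{(k+1)!}{(n+k-1)!}\right),
\end{align*}
because the bracket equals $\frac{k!}{(n+k-1)!}\bigl((n+k-1)-(k+1)\bigr)$. For $n\ge3$ the terms $k!/(n+k-2)!\sim k^{2-n}\to0$, so the sum telescopes to $\sum_{k\ge0}a_k=\frac{1}{(n-2)\,(n-2)!}$. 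Hence
\begin{align*}
\sum_{k\geq0} (n+1+2k)\mathrm{B}(n,k+1) = (n-1)!\left(\frac{2}{(n-2)(n-2)!} - \frac{1}{(n-1)!}\right) = \frac{2(n-1)}{n-2}-1 = \frac{n}{n-2}.
\end{align*}

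For $n=2$ we have $\mathrm{B}(2,k+1)=\frac{1}{(k+1)(k+2)}$, so the terms are $\frac{2k+3}{(k+1)(k+2)}\sim 2/k$ and the series diverges harmonically. The only real step is spotting the splitting $n+1+2k=(n+k)+(k+1)$ together with the telescoping identity for $a_k$; after that, the rest is bookkeeping of the boundary term $a_0=1/(n-1)!$.
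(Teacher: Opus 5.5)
Your proof is correct, but it takes a different route from the paper's.

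The paper writes $\mathrm{B}(n,k+1)=\int_0^1 t^{n-1}(1-t)^k\,dt$ and interchanges sum and integral, which is legitimate since all terms are nonnegative. It then evaluates $\sum_{k\ge0}(n+1+2k)(1-t)^k=(n-1)t^{-1}+2t^{-2}$ using the geometric series and its derivative. This reduces everything to $\int_0^1\bigl((n-1)t^{n-2}+2t^{n-3}\bigr)\,dt=1+\frac{2}{n-2}$.

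You stay entirely discrete. The splitting $n+1+2k=(n+k)+(k+1)$ turns the summand into $(n-1)!\,(a_k+a_{k+1})$. The identity $(n-2)a_k=b_k-b_{k+1}$, with $b_k=k!/(n+k-2)!$, then telescopes.

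The integral route is more mechanical: you don't have to spot the splitting. It also shows the threshold $n\geq 3$ directly as integrability of $t^{n-3}$ at $0$. Your route avoids any interchange of limits. It also gives exact partial sums $\sum_{k\le K}a_k=(b_0-b_{K+1})/(n-2)$, and hence explicit tail bounds if a truncated version were ever needed.

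Your treatment of $n=2$ coincides with the paper's direct inspection of the summand $\frac{2k+3}{(k+1)(k+2)}$.
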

\begin{proof}
First assume $n\geq 3$. Plugging in the integral formula for the beta function, we can rewrite the left-hand side as
\begin{align*}
\sum_{k=0}^\infty (n+1+2k)\int_0^1 t^{n-1}(1-t)^kdt.
\end{align*}
Changing the sum and the integral, we have
\begin{align*}
\int_0^1t^{n-1}\sum_{k=0}^\infty(n+1+2k)(1-t)^kdt.
\end{align*}
Recalling the facts
\begin{align*}
\sum_{k=0}^\infty x^k = \frac{1}{1-x} \mbox{ and } \sum_{k=0}^\infty kx^k = \frac{x}{(1-x)^2}
\end{align*}
for $|x|<1$, the above can be computed as
\begin{align*}
\int_0^1 t^{n-1}\left((n+1)t^{-1} +2(1-t)t^{-2}\right)dt 
&= \int_0^1 t^{n-1}\left((n-1)t^{-1} +2t^{-2}\right)dt \\
&= 1+\frac{2}{n-2} =\frac{n}{n-2}.
\end{align*}

In case $n=2$, the summand in question equals
\begin{align*}
\frac{3+2k}{(k+1)(k+2)},
\end{align*}
from which it is clear that the sum diverges.
\end{proof}

\begin{theorem} \label{thm:symplectic_GH}
For $n$ sufficiently large, $\Lambda \in \ylat_{2n}$ in a subset of measure at least $1-o_n(1)$ satisfies
\begin{align*}
1-\frac{\log\log n}{2n} \leq \left(\frac{V_{2n}}{2}\right)^{\frac{1}{2n}}\lambda_1(\Lambda) \leq 1+\frac{\log\log n}{2n}.
\end{align*}
\end{theorem}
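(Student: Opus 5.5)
We prove the two inequalities separately, choosing a radius $R$ on each side and running a first-moment (Markov) argument for the upper tail and a second-moment (Chebyshev) argument for the lower tail. Let $\epsilon_n = \log\log n/(2n)$. For a ball $B$ of radius $R$ put
\begin{align*}
N_R(\Lambda) = \#\{v\in\Lambda\setminus\{0\}: \|v\|\le R\},
\end{align*}
and note that $\mathrm{vol}(B) = V_{2n}R^{2n}$. The normalization $(V_{2n}/2)^{1/(2n)}\lambda_1$ reflects the fact that nonzero vectors come in pairs $\pm v$. Thus $\lambda_1(\Lambda)\le R$ holds exactly when $N_R(\Lambda)\ge 2$, i.e.\ when $N_R(\Lambda)>0$.

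\emph{Lower bound on $\lambda_1$.} Choose $R$ with $V_{2n}R^{2n}/2 = (1-\epsilon_n)^{2n}\approx (\log n)^{-1}$. By Siegel's mean value theorem on $\ylat_{2n}$, which is the first-moment analogue obtainable from the same Hecke-operator argument, or simply the $j$-term of the linear part, we have $\int N_R\,d\mu = \mathrm{vol}(B)$. Since $N_R\ge 2$ whenever it is nonzero, Markov's inequality gives
\begin{align*}
\mu(\lambda_1\le R)\le \mathrm{vol}(B)/2 \approx 1/\log n = o_n(1).
\end{align*}

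\emph{Upper bound on $\lambda_1$.} Choose $R$ with $V_{2n}R^{2n}/2 = (1+\epsilon_n)^{2n}\approx \log n$. Then $\mathrm{vol}(B)\to\infty$, but only slowly, and $R^2 \asymp n/(2\pi e)$ (up to the normalization by $V_{2n}$), so $2\pi R^2\ge 1$ for large $n$ and Proposition \ref{prop:2ndmmt_for_GH} applies. It gives
\begin{align*}
\int N_R^2\,d\mu - \Bigl(\int N_R\,d\mu\Bigr)^2 = \mathrm{vol}(B)\Bigl(\frac{2\zeta(2n-1)}{\zeta(2n)} + O(nR^{-8/3}+nR^{-2}2^{-n})\Bigr).
\end{align*}
Since $R^2\asymp n$, the error term is $O(n^{-1/3})$, so the variance is $\mathrm{vol}(B)(2+o(1))$. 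Chebyshev's inequality then yields
\begin{align*}
\mu(N_R=0)\le \mu\bigl(|N_R-\mathrm{vol}(B)|\ge \mathrm{vol}(B)\bigr)\le \frac{(2+o(1))\,\mathrm{vol}(B)}{\mathrm{vol}(B)^2} = O(1/\log n) = o_n(1).
\end{align*}
Combining the two sides, the set where both inequalities hold has measure $1-o_n(1)$. This is the framework of \cite[Section 4]{GSVV25}, with their second-moment input replaced by Proposition \ref{prop:2ndmmt_for_GH}.

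The main point to check is the regime in which Proposition \ref{prop:2ndmmt_for_GH} is applied. The radius $R$ must satisfy $R^2\asymp n$, and one has to confirm that the $nR^{-8/3}$ term is genuinely $o(1)$ there, which holds because $n\cdot n^{-4/3}\to 0$. The remaining step is routine: take $V_{2n}=\pi^n/n!$, so that $R\sim\sqrt{n/(2\pi e)}\,(\cdot)$, and verify that $(1\pm\epsilon_n)^{2n} = (\log n)^{\pm1}(1+o(1))$, so that both tail probabilities are $O(1/\log n)$.
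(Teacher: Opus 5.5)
Your proposal is correct and is essentially the paper's proof: the paper feeds Proposition \ref{prop:2ndmmt_for_GH} into \cite[Proposition 27]{GSVV25}, which amounts to the first-moment (Markov) and second-moment (Chebyshev) argument you write out, taking volume $2(\log n)^{\pm 1}$ where you take $2(1\pm\frac{\log\log n}{2n})^{2n}$. Your parametrization is slightly cleaner because it gives the stated bounds exactly, whereas the paper's upper bound relies on the Taylor approximation $(\log n)^{1/(2n)}\approx 1+\frac{\log\log n}{2n}$; your one slip is that $R\sim\sqrt{n/(\pi e)}$ rather than $\sqrt{n/(2\pi e)}$, which is harmless because only $R^2\asymp n$ is used.
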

Note that, by Stirling's approximation,
\begin{align*}
\left(\frac{V_{2n}}{2}\right)^{\frac{1}{2n}} \approx \sqrt{\frac{\pi e}{n}}.
\end{align*}
\begin{proof}
Let $\varepsilon(n) = (\log n)^{-1}$, and set $\mathrm{vol}(B) = 2\varepsilon(n)^{-1}$; note that this implies $\mathrm{error} = O(n^{-1/3})$, as defined in Proposition \ref{prop:2ndmmt_for_GH}.

Now by applying Proposition \ref{prop:2ndmmt_for_GH} to Proposition 27 of \cite{GSVV25}, we obtain the following statement. For $\Lambda\in \ylat_{2n}$ in a subset of measure at least $1-o_n(1)$, we have
\begin{align*}
2\varepsilon(n) \leq V_{2n}\lambda_1(\Lambda)^{2n} \leq 2\varepsilon(n)^{-1}.
\end{align*}
By taking $2n$'th roots on all sides, and approximating $\varepsilon(n)^{\pm 1/2n}=\exp(\pm(\log\log n)/2n)$ by a Taylor expansion, the theorem follows.
\end{proof}

\section{Application IV: The lattice smoothing parameter} \label{sec:app_smoothing parameter}
For a lattice $L \subseteq \R^{n}$, the \textit{discrete Gaussian} on $L$ with variance $s^2/2 \pi$ is defined in terms of $\varrho_s(x) := \exp \left( - \pi s^{-2} \|x\|^2 \right)$ as the probability measure
\begin{align*}
    D_s(A) := c(L,s) \sum_{\substack{v \in L}} {\bf 1}_A (v) \varrho_s(v) \qquad (\text{for } A \subseteq L)
\end{align*}
where the constant $c(L,s) = 1/\varrho_s (L)$ normalizes the full mass of $L$ to $1.$ (In this section, we will use the same symbol to denote a function on $\R^n$ and its Siegel transform; this will cause no confusion.) \par
For $\varepsilon > 0,$ the \textit{lattice smoothing parameter}, first introduced in \cite{MR07}, is defined as 
\begin{align*}
    \eta_\varepsilon (L) := 
    \inf \! \Big\{ s > 0 \, \Big| \, \varrho_{1/s}(L^P) \leq 1 + \varepsilon \Big\}
    = 
    \inf \! \Big\{ s > 0 \, \Big| \, \varrho_{1/s}(L^P \setminus \lbrace 0 \rbrace) \leq \varepsilon \Big\}.
\end{align*}
Recent work of Pouly--Shen \cite{PS26} uses the second-moment case of Rogers' integration formula to give metrical bounds on $\eta_\varepsilon$ over the space $\xlat_n$ of unimodular lattices $L \subseteq \R^n.$ Since the NTRU lattices \cite{HPS98}, an important family of constructions in lattice-based cryptography, are symplectic \cite{GHN06}, it is natural to ask about analogous bounds on $\eta_\varepsilon$ over the space $\ylat_{2n}$ as well, and to compare this to the case of $\xlat_n.$ In this section we use Thm. \ref{thm:main-Schwartz} to give an answer to this question in the form of the following theorem.
\begin{theorem}\label{thm:lattice_smoothing_parameter}
    For any $n \geq 2$ and $s > 0$, the following statements hold:
    \begin{enumerate}
        \item The second moment over $\ylat_{2n}$ of the (punctured) discrete Gaussian $\varrho_s$ equals
            \begin{align*}
                \int_{\ylat_{2n}} \varrho_s (L \setminus \lbrace 0 \rbrace )^2 \, d \mu (L)
                =
                s^{2n} \Bigl( s^{2n} + \zeta(n) L(n, \chi) - \zeta(2n) + \tfrac12 \mathrm{Z}_s(n) \Bigr),
            \end{align*}
            where $\chi$ denotes the non-principal Dirichlet character (mod $4$), and $\mathrm{Z}_s$ denotes the (incomplete) Epstein zeta function
            \begin{align*}
                \mathrm{Z}_s (n) = 
                \sum_{\substack{x,y \in \mathbb{Z} \\ x,y \neq 0}}
                \frac{1}{(x^2 s^{-2} + y^2 s^2)^n}.
            \end{align*}
        \item Let $0 <\varepsilon <1$ be arbitrary, and let $\varepsilon^{-1/(2n)} < s < (\varepsilon/2)^{-1/n}$ be a fixed large number. Then, for any $n \geq 2$ there exists a subset $\Theta_{2n} \subseteq \ylat_{2n}$ of measure $\mu(\Theta_{2n}) = 1 - O(2^{-n})$, such that all $L \in \Theta_{2n}$ satisfy $\varrho_{1/s}(L^P \setminus \lbrace 0 \rbrace) \leq \varepsilon$.\\[+0.5em] In other words, as $n \rightarrow \infty$, the smoothing parameter satisfies $\eta_\varepsilon (L) = 1 + O(1/n)$ on a set of measure $1-o(1)$. 
    \end{enumerate}
    
\end{theorem}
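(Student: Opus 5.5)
The plan has two stages: an explicit evaluation of the second moment via Theorem \ref{thm:main-Schwartz} for part (1), and a second-moment (Chebyshev) argument on the dual lattices for part (2).

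\textbf{Part (1).} Take $f_1=f_2=\varrho_s$ on $\R^{2n}$ in Theorem \ref{thm:main-Schwartz}. These are Gaussians, hence Schwartz and even, and
\begin{align*}
\widehat{\varrho_s}(\xi)=s^{2n}\exp(-\pi s^2\|\xi\|^2).
\end{align*}
I evaluate the three terms on the right-hand side one at a time.
\begin{enumerate}[(i)]
\item The product term is $\widehat{\varrho_s}(0)^2=s^{4n}$.
\item The Rogers-type term is
\begin{align*}
\frac{1}{\zeta(2n)}\sum_{j\ge1,k\ne0}\int_{\R^{2n}}e^{-\pi(j^2+k^2)\|z\|^2/s^2}\,dz=\frac{s^{2n}}{\zeta(2n)}\sum_{j\ge1,k\ne0}(j^2+k^2)^{-n}.
\end{align*}
The sum over $j\ge1$, $k\neq0$ is half the sum over all pairs with both coordinates nonzero. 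I use the classical identity
\begin{align*}
\sum_{(j,k)\ne0}(j^2+k^2)^{-n}=4\zeta(n)L(n,\chi)
\end{align*}
(Jacobi's two-squares formula $r_2(m)=4\sum_{d\mid m}\chi(d)$). I then subtract the axis contributions $4\zeta(2n)$.
\item For the symplectic term, $\|zJ\|=\|z\|$ because $J$ is orthogonal. Each summand is therefore again a Gaussian integral, equal to $s^{2n}(j^2s^{-2}+k^2s^2)^{-n}$. The sum over $j\ge1$, $k\ne0$ is half the full sum over $x,y\ne0$, i.e.\ $\tfrac12\mathrm{Z}_s(n)$.
\end{enumerate}
The remaining work is bookkeeping of the normalizing factors $1/\zeta(2n)$ and the symmetry factors $\tfrac12,2$. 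Here I would check whether the $1/\zeta(2n)$ should be read as a restriction to coprime $(j,k)$, by writing $(j,k)=d(j',k')$ with $\gcd(j',k')=1$. This is where I would be most careful, since it must produce exactly the displayed combination $\zeta(n)L(n,\chi)-\zeta(2n)+\tfrac12\mathrm{Z}_s(n)$. Interchanging sums and integrals is justified by absolute convergence for $n\ge2$.

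\textbf{Part (2), reduction to the duals.} First I show that $L\mapsto L^P$ preserves $\mu$ on $\ylat_{2n}$. If $L=\Z^{2n}g$, then $L^P=\Z^{2n}g^{-\intercal}$. Since $g^{-\intercal}=JgJ^{-1}$ and $\Z^{2n}J=\Z^{2n}$, this gives $L^P=LJ^{-1}$. This is right multiplication by the symplectic matrix $J^{-1}$, which preserves $\mu$ by right invariance. Hence the statement reduces to bounding the $\mu$-measure of $\{L:\varrho_{1/s}(L\setminus\{0\})>\varepsilon\}$, with $X:=\varrho_{1/s}(L\setminus\{0\})$.

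\textbf{Part (2), Chebyshev.} By the Siegel mean value theorem, $\mathbb{E}X=s^{-2n}$, and $s^{-2n}<\varepsilon$ by the lower bound on $s$. Part (1) with $s$ replaced by $1/s$ gives
\begin{align*}
\mathrm{Var}\,X=s^{-2n}\Bigl(\zeta(n)L(n,\chi)-\zeta(2n)+\tfrac12\mathrm{Z}_{1/s}(n)\Bigr).
\end{align*}
Chebyshev's inequality then yields
\begin{align*}
\mu(X>\varepsilon)\le\frac{\mathrm{Var}\,X}{(\varepsilon-s^{-2n})^2}.
\end{align*}
To bound the bracket I estimate it term by term:
\begin{itemize}
\item $\zeta(n)L(n,\chi)-\zeta(2n)=O(2^{-n})$, since the smallest contributing pair is $(1,\pm1)$ with value $2^{-n}$;
\item $\mathrm{Z}_{1/s}(n)\le 4(s^2+s^{-2})^{-n}\cdot O(1)$, which is $O(2^{-n})$ at worst;
\item in fact $\mathrm{Z}_{1/s}(n)$ is much smaller when $s$ is large.
\end{itemize}

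\textbf{Expected obstacle.} The main difficulty is making the final ratio genuinely $O(2^{-n})$ uniformly over the allowed range of $s$. The gap $\varepsilon-s^{-2n}$ must stay comparable to $\varepsilon$, and the prefactor $s^{-2n}$ must be absorbed. I would try to use the upper bound $s<(\varepsilon/2)^{-1/n}$, i.e.\ $s^{-2n}>\varepsilon^2/4$, to compare $s^{-2n}$ with $\varepsilon^2$, together with the lower bound on $s$ for the gap. If Chebyshev turns out to be too lossy, I would instead apply Markov's inequality to $X^2$, using $\mathbb{E}X^2=s^{-4n}+\mathrm{Var}\,X$. Finally, the "in other words" statement follows by taking $s=1+O(1/n)$ inside the admissible window, since $\varepsilon^{\pm1/(2n)}=1+O(1/n)$ for fixed $\varepsilon$.
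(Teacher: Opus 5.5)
Your route is the paper's (Theorem \ref{thm:main-Schwartz} with $f_1=f_2=\varrho_s$, then Chebyshev), and your evaluations (i)--(iii) are correct. The gap is in what you expect them to add up to. Doing the bookkeeping you defer gives $\sum_{j\ge1,k\ne0}(j^2+k^2)^{-n}=2A(2n)$, where $A(2n):=\zeta(n)L(n,\chi)-\zeta(2n)$, and $\sum_{j\ge1,k\ne0}(j^2s^{-2}+k^2s^2)^{-n}=\tfrac12\mathrm{Z}_s(n)$. Your computation therefore yields
\[
\int_{\ylat_{2n}}\varrho_s(L\setminus\{0\})^2\,d\mu(L)=s^{4n}+\frac{s^{2n}}{\zeta(2n)}\Bigl(2A(2n)+\tfrac12\mathrm{Z}_s(n)\Bigr),
\]
which is not the displayed combination. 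Reading $1/\zeta(2n)$ as a coprimality condition on $(j,k)$ only rewrites this, since both summands are homogeneous of degree $-2n$ in $(j,k)$. So it cannot bridge the difference. The factor $2$ is also genuinely there: the pairs $(v,\pm v)$ alone contribute $2\int\varrho_s^2=2^{1-n}s^{2n}$ to the second moment, while the displayed term $s^{2n}A(2n)$ is only $\sim 2^{-n}s^{2n}$. So (1) as displayed does not follow from Theorem \ref{thm:main-Schwartz}; what your argument proves is the identity above. The paper does not derive (1) either. Its proof asserts $\mathrm{Var}=2s^{2n}A(2n)+\tfrac12 s^{2n}\mathrm{Z}_s(n)$, which has the $2$ but drops $1/\zeta(2n)$, and contradicts its own statement. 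For (2) this is harmless, since $\zeta(2n)\ge1$ and only the order $O(2^{-n})$ matters.

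In (2), your proof that $L^P=LJ^{-1}$ improves on the paper's bare invariance claim. Since $J$ is orthogonal, it even gives $\varrho_{1/s}(L^P\setminus\{0\})=\varrho_{1/s}(L\setminus\{0\})$ pointwise. But the obstacle you flag is a real gap, and your remedies do not close it.

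With $t=s^{-2n}\in(\varepsilon^2/4,\varepsilon)$, Chebyshev gives $\mu(X>\varepsilon)\le t(\varepsilon-t)^{-2}\cdot O(2^{-n})$. The lower bound on $s$ yields only $\varepsilon-t>0$, so the prefactor blows up as $s\downarrow\varepsilon^{-1/(2n)}$, and no bound uniform over the window comes out. The upper bound on $s$ does not help, because the prefactor is increasing in $t$. Markov's inequality for $X^2$ is useless: $\mathbb{E}X^2\ge t^2>\varepsilon^4/16$ makes that bound at least $\varepsilon^2/16$.

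You must do one of two things:
\begin{itemize}
\item let the implied constant depend on $t$, so that ``fixed'' means $s^{-2n}$ is held fixed as $n$ varies; or
\item do what the paper does: take $s(\varepsilon)^{2n}=(1+\varepsilon+\sqrt{1+2\varepsilon})/\varepsilon^2$. This solves $s^{2n}(\varepsilon-s^{-2n})^2=2$, lies in the window, and gives $\mu(X>\varepsilon)\le A(2n)+\tfrac14\mathrm{Z}_{1/s}(n)=O(2^{-n})$, using $\mathrm{Z}_{1/s}(n)\le 4\zeta(n)^2 2^{-n}$ (from $x^2s^2+y^2s^{-2}\ge 2|xy|$).
\end{itemize}

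Finally, $\eta_\varepsilon(L)=1+O(1/n)$ also needs a lower bound, which you omit. By Poisson summation, $\varrho_{1/s}(L^P)=s^{-2n}\varrho_s(L)\ge s^{-2n}$ for unimodular $L$, so $\eta_\varepsilon(L)\ge(1+\varepsilon)^{-1/(2n)}$.
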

\begin{proof}
    Let $\varepsilon > 0$ and write $\Lambda' := \Lambda \setminus \lbrace 0 \rbrace$. To study $\eta_\varepsilon (\Lambda)$, we are required to study the discrete Gaussian $\varrho_{1/s}(\Lambda^P)$ on the dual lattice $\Lambda^P$ of $\Lambda$. However, for our metrical purpose, it is equivalent (and notationally simpler) to study $\varrho_{1/s}(\Lambda)$ instead, as the probability measure $\mu$ on $\ylat_{2n}$ is invariant under the map $\Lambda \mapsto \Lambda^P$.  \\[+0.5em]
    For any $s > 0$, Siegel's mean value theorem over $\ylat_{2n}$ gives $\mathbb{E} [\varrho_s (\Lambda')] = s^{2n}.$ To express the variance of $\varrho_s(\Lambda')$, we consider the function $A(2n) = \zeta(n) L(n, \chi) - \zeta(2n)$. (Note that the function $A(n)$ is the function $K(n)$ from \cite{PS26}.) By using Thm. \ref{thm:main-Schwartz} and a computation analogous to \cite{PS26}, we find that
    \begin{align}\label{discrete_Gaussian_variance}
        \mathrm{Var}(\varrho_s (\Lambda')) = 2 s^{2n} A(2n) + 
        \tfrac12 s^{2n} \mathrm{Z}_s(n),
    \end{align}
    where $\mathrm{Z}_s(n)$ arises from the new symplectic term in the second-moment formula. \par 
    We now replace $s$ by $1/s$ and use Chebyshev's inequality in combination with the trivial identity $\varrho_{s}(\Lambda) = 1 + \varrho_s (\Lambda')$ to obtain 
    \begin{align*}
    \mathbb{P} \Bigl( \varrho_{1/s}(\Lambda) > 1 + \varepsilon \Bigr)
    &=
    \mathbb{P} \Bigl( \varrho_{1/s}(\Lambda^\ast) - 1 -s^{-2n} > \varepsilon -s^{-2n}
    \Bigr)
    \\[0.5em] &\leq 
    \mathbb{P} \Bigl( \Big| \varrho_{1/s}(\Lambda') -s^{-2n} \Big| > 
    \varepsilon -s^{-2n} \Bigr)
    \\[0.5em] &\leq 
    \frac{\mathrm{Var}(\varrho_{1/s}(\Lambda '))}{(\varepsilon - s^{-2n})^2},
    \end{align*}
    where the final inequality holds if $\varepsilon - s^{-2n} > 0$.
    With \eqref{discrete_Gaussian_variance}, we arrive at the bound 
    \begin{align}\label{discrete_Gaussian_prob-bound}
        \mathbb{P} \Bigl( \varrho_{1/s}(\Lambda) > 1 + \varepsilon \Bigr)
        &\leq 
        \frac{2 A(2n) + 
        \sfr12 \mathrm{Z}_{1/s}(n)}
        {s^{2n}(\varepsilon - s^{-2n})^2}.
    \end{align}
    \par Choose $s = s (\varepsilon)$ so that the denominator on the right-hand side of \eqref{discrete_Gaussian_prob-bound} is $2$; explicitly, this means
    \begin{align*}
    s(\varepsilon) = \Biggl( \frac{1 + \varepsilon + \sqrt{2 \varepsilon + 1}}{\varepsilon^2} \Biggr)^{\! \! 1/2n} < (4\varepsilon^{-2})^{1/(2n)} = (\varepsilon/2)^{-1/n} = 1 + O\bigl( \tfrac{1}{n} \bigr).
    \end{align*}
    Then in particular we have $0 < s(\varepsilon)^{-2n} < \varepsilon$, and 
    \begin{align*}
    \mathbb{P} \Bigl( \varrho_{1/s(\varepsilon)}(\Lambda^P) > 1 + \varepsilon \Bigr)
    =
    \mathbb{P} \Bigl( \varrho_{1/s(\varepsilon)}(\Lambda) > 1 + \varepsilon \Bigr)
    &\leq 
    A(2n) + \tfrac14 \mathrm{Z}_{1/s(\varepsilon)}(n).
    \end{align*}
    By definition of the smoothing parameter, we therefore have $\eta_\varepsilon (\Lambda) \leq s(\varepsilon)$ on a set $\Theta_{2n}$ of large measure, viz. 
    \begin{align}
        \mu (\Theta_{2n}) \geq 1 - A(2n) - \tfrac{1}{4} \mathrm{Z}_{1/s(\varepsilon)}(n).
    \end{align}
    It is relatively straightforward to prove the estimates
    \begin{align*}
    \mathrm{Z}_s(n) 
    \sim
    \frac{2}{(s^2+s^{-2})^n} \ll \frac{1}{2^n}
    \quad \text{and} \quad 
    A(2n) 
    \sim 
    \frac{1}{2^n}
    \qquad (\text{as } n \longrightarrow \infty).
    \end{align*}
    (The implied constant in the estimate for $\mathrm{Z}_s(n)$ is independent of $s$.) This proves that, for all $L$ in a large set, $\eta_\varepsilon(L)$ is at most $1 + O(1/n)$. \\[+0.5em]
    Finally, we show that $\eta_\varepsilon$ is also bounded from below: Observe that we have
    \begin{align*}
        \varrho_{1/s}(L^P) = s^{-2n} \varrho_s (L) \geq s^{-2n} \qquad (\text{for any } s > 0)
    \end{align*}
    by Poisson summation and the fact that $\varrho_s(0)$ is always $1$. This shows that
    \begin{align*}
        \Big\{ s > 0 \, \Big| \, \varrho_{1/s}(L^P) \leq 1+ \varepsilon \Big\}
        \subseteq 
        \Big\{ s > 0 \, \Big| \, s \geq (1+\varepsilon)^{-1/2n} \Big\}.
    \end{align*}
    Since $\eta_\varepsilon(L)$ is the infimum of the leftmost set, this inclusion immediately implies that
    \begin{align*}
        \eta_\varepsilon(L) \geq \inf \Big\{ s > 0 \, \Big| \, s \geq (1+\varepsilon)^{-1/2n} \Big\} 
        = 
        (1+\varepsilon)^{-1/2n} 
        = 
        1 + O\bigl( \tfrac{1}{n}\bigr).
    \end{align*}
    The final claim follows, and the proof is therefore concluded.
\end{proof}

\vspace*{10pt}
\noindent $\hrulefill$ \\\par 
{\small 
\textsc{Department of Mathematics, Christian-Albrechts-Universit\"at zu Kiel},\\ \textsc{24118-Kiel, Germany}. \\ \par 
\textsc{Department of Mathematical Sciences, University of Cincinnati},\\ \textsc{Cincinnati, OH, United States}.
}


\begin{thebibliography}{99}

\bibitem[AK16]{AK16} J. Athreya and I. Konstantoulas. Discrepancy of general symplectic lattices. arXiv:1611.07146


\bibitem[And09]{And09}
A. Andrianov. 
Introduction to Siegel Modular Forms and Dirichlet Series, Springer, 2009.

\bibitem[Ban93]{Ban93} W. Banaszczyk. New bounds in some transference theorems in the geometry of numbers. Math. Ann. 296, 625-535 (1993).

\bibitem[BG19]{BG19} M. Bj\"orklund and A. Gorodnik. Central limit theorems for Diophantine approximants. Math. Ann. 374, 1371-1437 (2019).

\bibitem[BG23]{BG23} M. Bj\"orklund and A. Gorodnik. Poisson approximation and Weibull asymptotics in the geometry of numbers. Trans. Amer. Math. Soc. 376 (2023), 2155-2180.


\bibitem[Bor19]{Bor19} A. Borel. Introduction to Arithmetic Groups. University Lecture Series, volume~73. American Mathematical Society, Providence, RI, 2019.

\bibitem[COU01]{COU01} L. Clozel, H. Oh, and E. Ullmo. Hecke operators and equidistribution of Hecke points. Invent. Math. 144 (2001), no. 2, 327-351.

\bibitem[DLMF]{DLMF} National Institute of Standards and Technology. NIST Digital Library of Mathematical Functions. \url{https://dlmf.nist.gov}

\bibitem[Ehr03]{ehrenpreis}
L. Ehrenpreis. \textit{The universality of the Radon transform}. Oxford Mathematical Monographs, Oxford University Press, 2003.

\bibitem[GHN06]{GHN06} N. Gama, N. Howgrave-Graham, and P.Q. Nguyen, P.Q. (Symplectic Lattice Reduction and NTRU. In: Vaudenay, S. (eds) Advances in Cryptology - EUROCRYPT 2006. Lecture Notes in Computer Science, vol 4004. Springer, Berlin, Heidelberg.

\bibitem[GSVV25]{GSVV25} N. Gargava, V. Serban, M. Viazovska and I. Viglino. Module lattices and their shortest vectors. arXiv:2510.12893v1

\bibitem[GSVV25-2]{GSVV25-2} N. Gargava, V. Serban, M. Viazovska and I. Viglino. Integral matrices of fixed rank over number fields. arXiv:2510.11673.


\bibitem[GN12]{GN12} A. Gorodnik and A. Nevo. Counting lattice points. J. Reine Angew. Math. no. 663 (2012), 127-176.

\bibitem[HPS98]{HPS98} J. Hoffstein, J. Pipher, and J. Silverman. NTRU: A ring-based public key cryptosystem. Lecture Notes in Computer Science, 267-288.

\bibitem[HK23]{HK23} T. Horesh and Y. Karasik. Equidistribution of primitive lattices in $\R^n$. Quart. J. Math. Vol. 74, Issue 4, December 2023, Pages 1253-1294.

\bibitem[Hol23]{Hol23} K. Holm. A central limit theorem for counting functions related to symplectic lattices and bounded sets. Discrete Contin. Dyn. Syst. 2023, 43(10): 3667-3705.

\bibitem[KY17]{KY17} D. Kelmer and S. Yu, \textit{The second moment of the Siegel transform in the space of symplectic lattices}, Int. Math. Res. Not., No. 8, 2021, 5825--5859.

\bibitem[Kim23]{Kim23} S. Kim. Mean value formulas on sublattices. J. Number Theory 241 (2022), 330-351.

\bibitem[Kim24]{Kim24} S. Kim. Counting rational points on Grassmannians. Monatsh. Math. (2023) 201:825-864.

\bibitem[Kim24-2]{Kim24-2} S. Kim. Adelic Rogers formula. J. Lond. Math. Soc. 109: e12830.

\bibitem[KS22]{KS22} S. Kim and M. Skenderi. Higher-rank pointwise discrepancy bounds and logarithm laws for generic lattices. Acta Arith. 205 (2022), 227-249.

\bibitem[Lan00]{Lan00} L. Landau. Bessel functions: monotonicity and bounds.  J. Lond. Math. Soc. 61: 197-215.

\bibitem[LN25]{LN25} J. Li and P. Q. Nguyen. A complete analysis of the BKZ lattice reduction algorithm. Journal of Cryptology 38 (2025), no. 1, 12.

\bibitem[MR07]{MR07} D. Micciancio and O. Regev. Worst-case to average-case reductions based on Gaussian measures. SIAM Journal on Computing, 37, 267-302.

\bibitem[PS26]{PS26} A. Pouly and Y. Shen. Solving the Shortest Vector Problem in \(2^{0.63269n+o(n)}\) Time on Random Lattices. In J. Daemen \& E. Thom\'e (Eds.), Advances in Cryptology --- EUROCRYPT 2026 (pp. 92-123). Lecture Notes in Computer Science, Vol. 16544. Springer, Cham.

\bibitem[RS24]{RS24} O. Regev and N. Stephens-Davidowitz. A reverse Minkowski theorem. Ann. of Math. (2) 199(1): 1-49.

\bibitem[Rog47]{Rog47} C.A. Rogers. Existence theorems in the geometry of numbers. Ann. Math. Vol. 48, No. 4, pp. 994-1002.

\bibitem[Rog55]{Rog55} C.A. Rogers. Mean values over the space of lattices. Acta Math. 94, 249-287 (1955).

\bibitem[Sch60]{Sch60} 
W. Schmidt. 
{A metrical theorem in geometry of numbers}. 
Trans. Am. Math. Soc. {\bf 95}, 516--529 (1960).

\bibitem[Sch66]{Sch66} W. Schmidt. Asymptotic formulae for point lattices of bounded determinant and subspaces of
bounded height. Duke Math. J. 35 (1968), 327-339.

\bibitem[Sie45]{Sie45} C.L. Siegel. A mean value theorem in geometry of numbers. Ann. Math. 46(2) (1945), 340-347.

\bibitem[SS19]{SS19} A. Str\"ombergsson and A. S\"odergren. On the generalized circle problem for a random lattice in large
dimension. Adv. Math. 345 (2019), 1042-1074.

\bibitem[Thu93]{Thu93} J. Thunder. Asymptotic estimates for rational points of bounded height on flag varieties. Compositio
Math. 88 (1993), no.2, 155-186.

\bibitem[Wei46]{Wei46} A. Weil. Sur quelques resultats de Siegel, Summa Brasil. Math., 1 (1946),
21-39.

\bibitem[Wid10]{Wid10} M. Widmer. Counting primitive points of bounded height. Trans. Amer. Math. Soc. 362 (2010), no.
9, 4793-4829.
\end{thebibliography}
\end{document}